\numberwithin{equation}{section}
\newtheorem{theorem}{Theorem}[section]
\newtheorem*{theorem*}{Theorem}
\newtheorem{lemma}[theorem]{Lemma}
\newcommand{\DD}{\mathbb{D}}
\newtheorem{proposition}[theorem]{Proposition}
\theoremstyle{definition}
\newtheorem{definition}[theorem]{Definition}
\newtheorem{remark}[theorem]{Remark}
\theoremstyle{definition}
\theoremstyle{definition}
\newtheorem{question}[theorem]{Question}
\begin{document}

\title[\resizebox{2.9in}{!}{  \large POSITIVITY CONDITIONS ON THE ANNULUS}]{\large POSITIVITY CONDITIONS ON THE ANNULUS VIA THE DOUBLE-LAYER POTENTIAL KERNEL}

\author[\small M. T.  JURY]{MICHAEL T. JURY}
\address{DEPARTMENT OF MATHEMATICS, UNIVERSITY OF FLORIDA, GAINESVILLE, FL}
\email{mjury@ufl.edu} 
\author[\small G. TSIKALAS]{GEORGIOS TSIKALAS}
\address{DEPARTMENT OF MATHEMATICS AND STATISTICS, WASHINGTON UNIVERSITY IN ST. LOUIS, ST. LOUIS, MO}
\email{gtsikalas@wustl.edu} 

\subjclass[2010]{Primary: 47A30; Secondary: 47A20, 47A25} 
\keywords{spectral set, operator radii, completely positive map, annulus}
\small
\begin{abstract}
    \small
We introduce and study a scale of operator classes on the annulus that is motivated by the $\mathcal{C}_{\rho}$ classes of $\rho$-contractions of Nagy and Foia\c{s}. In particular, our classes are defined in terms of the contractivity of the double-layer potential integral operator over the annulus. We prove that if, in addition, complete contractivity is assumed, then one obtains a complete characterization involving certain variants of the $\mathcal{C}_{\rho}$ classes. Recent work of Crouzeix-Greenbaum and Schwenninger-de Vries allows us to also obtain relevant K-spectral estimates, generalizing and improving existing results from the literature on the annulus. Finally, we exhibit a special case where these estimates can be significantly strengthened. 
\end{abstract}
\maketitle

\small
 \section{INTRODUCTION} 
 \large 
Let $\rho>0$. Denote by $\mathcal{C}_{\rho}$ the class of all bounded Hilbert space operators $T\in\mathcal{B}(H)$ that have a unitary $\rho$-dilation, i.e. there exists a Hilbert space $K\supset H$ and a unitary $U\in\mathcal{B}(K)$ such that 
$$T^n=\rho P_H U^n|_H, \hspace{0.3 cm} n=1, 2,\dots,$$
where $P_H$ is the orthogonal projection of $K$ onto $H$. The elements of $\mathcal{C}_{\rho}$ are referred to as $\rho$-contractions. $\mathcal{C}_{\rho}$ was introduced by Sz.-Nagy and Foia\c{s} \cite{NagyFoiasoncertainclasses} (see also \cite[Chapter 1]{NagyFoiasbook}) and has subsequently been investigated by many authors (see e.g. \cite{Harnackcrho} and the references therein). It is known that $\mathcal{C}_1$ is precisely the set of contractions on $H$ \cite{NagyFrench}, while $\mathcal{C}_2$ is
the set of operators whose numerical range $W(T)=\{\langle Tx, x\rangle\text{ } | \text{ } ||x||=1 \}$ is contained in $\overline{\mathbb{D}}=\{|z|\le 1\}$ \cite{Bergerstampflimapping}, \cite{Bergerthesis}. \par
Now, for any open, bounded domain $\Omega\subset\mathbb{C}$, let $\mathcal{A}(\Omega)$ denote the uniform algebra of continuous functions $f$ on $\overline{\Omega}$ that are holomorphic on $\Omega$. Also, let  $T\in\mathcal{B}(H)$ be such that $\sigma(T)\subset\mathbb{D}.$ An alternate characterization of $\mathcal{C}_{\rho}$ (see \cite[p. 315]{CassierFackContractions}) then states that $T$ is a $\rho$-contraction if and only if the operator 
$$S_{\mathbb{D}, \rho}: \mathcal{A}(\mathbb{D})\to \mathcal{B}(H) $$
\begin{equation} \label{2001}  f\mapsto \frac{1}{\rho}\bigg[f(T)+(C\overline{f})(T)^*+\frac{(\rho-2)}{2\pi i}\int_{\partial\mathbb{D}}f(\zeta)d\zeta\bigg]\end{equation}
$$=\frac{1}{\rho}[f(T)+(\rho-1)f(0)] $$
is contractive, where $$(C\overline{f})(z)=\frac{1}{2\pi i}\int_{\partial\mathbb{D}}\frac{\overline{f(\zeta)}}{\zeta-z}d\zeta \hspace{0.3 cm} (z\in\mathbb{D})$$
denotes the Cauchy transform of $\overline{f}.$  The mapping $S_{\mathbb{D}, \rho}$ has proved itself a valuable tool for better understanding $\mathcal{C}_{\rho}$. As a recent example, Clouâtre, Ostermann and Ransford  \cite{RansfordOster}, building on ideas from \cite{LiCaldwellGreen}, used the contractivity of $S_{\mathbb{D}, \rho}$ as a stepping stone for an alternative, simple proof of the fact that, for any $\rho\ge 1$ and $T\in\mathcal{C}_{\rho},$ 
$$||p(T)||\le \rho \sup_{z\in\overline{\mathbb{D}}} |p(z)|, \hspace{0.3 cm} \forall p\in C[z].$$
In other words, $\overline{\mathbb{D}}$ is a $\rho$-spectral set for $T$ whenever $T$ is a $\rho$-contraction (with $\rho\ge 1$), a result originally proved by Okubo and Ando via dilation theory \cite{OkuboAndroconstantsrelated}. \par More generally, the unit disk $\mathbb{D}$ can be replaced by any (smoothly bounded) open convex set $\Omega\subset\mathbb{C}.$ In this setting, the contractivity of the \textit{double-layer potential} integral operator 
\[ S_{\Omega, 2}: \   f\mapsto \frac{1}{2}\big[f(T)+(C\overline{f})(T)^*\big]\]
(which is equivalent to the inclusion $\overline{W(T)}\subset \Omega$) has served as a basis for establishing $K$-spectral estimates for $W(T)$; an approach originally due to Delyon and Delyon \cite{Delyon}, it was further refined by Crouzeix in \cite{CrouxNumerangeandfunctional} and culminated in the Crouzeix-Palencia paper \cite{CrouzPal} (see also \cite{RansfordSchwenninger}, \cite{manyauthorscrouz} and the recent preprint \cite{schwendevries} for further developments), where it was shown that $\overline{W(T)}$ is always a $(1+\sqrt{2})$-spectral set for any $T\in\mathcal{B}(H).$   \par 
More recently, the study of $K$-spectral estimates through the contractivity of $S_{\Omega, 2}$ has expanded beyond convex domains (see \cite{LiCaldwellGreen}, \cite{CrouzGreen}). One important example in that direction, and what actually served as the main motivation for this paper, revolves around the annulus $A_R=\{1/R<|z|<R\}$ and the associated operator class $\mathbb{QA}_R$, termed the \textit{quantum annulus}, that consists of all invertible Hilbert space operators $T$ satisfying $||T||, ||T^{-1}||\le R$ (for recent results regarding $\mathbb{QA}_R$, see also  \cite{belloyak},\cite{mcculloughpascoe}, \cite{Tsiknote} and \cite{Tsikvonneumann}). In particular, in \cite[Section 5]{CrouzGreen} it was shown that the mapping 
$$ S_{R, 0}: \mathcal{A}(A_R)\to \mathcal{B}(H) $$
\begin{equation*}  f\mapsto \frac{1}{2}\big[f(T)+(C\overline{f})(T)^*\big], \end{equation*}
 where $(C\overline{f})(z)=\frac{1}{2\pi i}\int_{A_R}\frac{\overline{f(\zeta)}}{\zeta-z}d\zeta$, will always be contractive if $T\in \mathbb{QA}_R$. This observation, combined with the contractivity of $f\mapsto C\overline{f}$ over $A_R$ and an abstract functional analysis lemma (see \cite[Theorem 2]{CrouzGreen} or \cite[Lemma 1.1]{RansfordSchwenninger}), suffices to establish that, given any $T\in\mathbb{QA}_R$, we must have
 $$||f(T)||\le (1+\sqrt{2})\sup_{z\in\overline{A_R}}|f(z)|, \hspace{0.3 cm} \forall f\in  \mathcal{A}(A_R),$$
 i.e.  $\overline{A_R}$ will be a $(1+\sqrt{2})$-spectral set for $T$ whenever $T\in\mathbb{QA}_R$ (it is known that the optimal value of this spectral constant cannot be less than $2$, see \cite{Tsiknote}). \par 
Now, observe that in the previous discussion, the class $\mathbb{QA}_R$ enters the picture only through the contractivity of $S_{R, 0}$. Thus, if one wants to gain a better understanding of $K$-spectral estimates over $A_R$, the following question emerges naturally: for which operators $T$ will $S_{R, 0}$ be contractive? This line of inquiry, together with the form of the mapping (\ref{2001}), is what motivated our definition of the operator class $\mathbb{DLA}_R(c)$ (where $c>-2$);  an operator $T\in\mathcal{B}(H)$ with $\sigma(T)\subset A_R$ will belong to $\mathbb{DLA}_R(c)$ if and only if the mapping 
$$ S_{R, c}: \mathcal{A}(A_R)\to \mathcal{B}(H) $$
\begin{equation*}  f=\sum_{n=-\infty}^{\infty}a_nz^n\mapsto \frac{1}{2+c}\big[f(T)+(C\overline{f})(T)^*+ca_0\big] \end{equation*}
is contractive (see Section \ref{basic} for the general definition and Theorem \ref{doublelayer} for the equivalence between the two when $\sigma(T)\subset A_R$). The goal of the present work is to study the operator class $\mathbb{DLA}_R(c)$ and its completely contractive analogue $\mathbb{CDLA}_R(c)$ (see Section \ref{completeversion}). 
Note that the study of completely bounded maps and dilations in the setting of the double-layer potential kernel and Crouzeix's conjecture has already been successfully initiated in the papers \cite{Putsand} and \cite[Section 6]{RansfordOster}. Also, we point out that, while the inclusion $\mathbb{CDLA}_R(c)\subseteq \mathbb{DLA}_R(c)$ follows immediately from the definitions, it is not known to us whether it is strict or not (see Question \ref{quest}).
\par 
To state our first main result, a characterization of $\mathbb{CDLA}_R(c)$, we require the following generalization of the $\mathcal{C}_{\rho}$ classes, introduced by Langer \cite[p.  53]{NagyFoiasbook} (see also \cite{WA}).
\begin{definition}\label{CAAAA}
Assume $A\in\mathcal{B}(H)$ is a bounded, positive operator that is also bounded below. The class $\mathcal{C}_A$ contains all operators $T\in\mathcal{B}(H)$ with the property that there exists a Hilbert space $K\supset H$ and a unitary $U\in\mathcal{B}(K)$ such that 
$$A^{-1/2}T^nA^{-1/2}=P_HU^n|_H, \hspace{0.3 cm} n=1, 2, \dots.$$
\end{definition}
Our characterization then proceeds as follows. Note that the inequality $T\ge 0$ implies that the Hilbert space operator $T$ is positive, while $T>0$ implies that it is strictly positive. 
\begin{theorem} \label{2002}
Let $T\in \mathcal{B}(H)$ and $c>-2$, with $\sigma(T)\subset A_R$. Then, $T\in \mathbb{CDLA}_R(c)$ if and only if $T/R\in \mathcal{C}_{2+c-A}$ and $T^{-1}/R\in \mathcal{C}_A $ for some $A\in\mathcal{B}(H)$ such that $A>0$ and $2+c-A>0$. 
\end{theorem}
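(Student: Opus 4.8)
The plan is to route the whole argument through dilation theory, the guiding idea being that the two boundary circles of $A_R$ should each contribute a separate Langer-class dilation, with the two ``weights'' adding up to $(2+c)I$. First, using the definition of $\mathbb{CDLA}_R(c)$ together with Theorem~\ref{doublelayer} and the hypothesis $\sigma(T)\subset A_R$, membership $T\in\mathbb{CDLA}_R(c)$ is equivalent to $S_{R,c}$ being completely contractive as a unital map of the operator system $\mathcal S:=\mathcal A(A_R)+\overline{\mathcal A(A_R)}\subseteq C(\partial A_R)$ into $\mathcal B(H)$; being unital, $S_{R,c}$ is then automatically completely positive, so by Arveson's extension theorem and Stinespring's theorem there exist a Hilbert space $\widetilde K\supseteq H$ and a \emph{normal} $N\in\mathcal B(\widetilde K)$ with $\sigma(N)\subseteq\partial A_R$ and $S_{R,c}(f)=P_Hf(N)|_H$ for all $f\in\mathcal S$. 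Since $z^{n}\in\mathcal A(A_R)$ for every $n\in\mathbb Z$ (the annulus omits the origin), evaluating on these monomials and unwinding the definition of $S_{R,c}$ shows that this is the same as requiring, for all $m\ge1$,
\[ P_HN^{m}|_H=\tfrac{1}{2+c}\bigl(T^{m}+R^{-2m}(T^{*})^{-m}\bigr),\qquad P_HN^{-m}|_H=\tfrac{1}{2+c}\bigl(T^{-m}+R^{-2m}(T^{*})^{m}\bigr), \]
together with $P_HN^0|_H=I$; once these hold on monomials they hold on all of $\mathcal S$ by linearity, boundedness, density of Laurent polynomials, and self-adjointness.

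The heart of the matter is to split $N$ along its spectrum. Let $P_1,P_2$ be the spectral projections of $N$ onto its spectral subspaces over $R\mathbb T$ and $\tfrac1R\mathbb T$, so $W_1:=\tfrac1R\,N|_{\operatorname{ran}P_1}$ and $W_2:=R\,N|_{\operatorname{ran}P_2}$ are unitary. Put $A_1:=P_HP_1|_H$, $A_2:=P_HP_2|_H$ (whence $A_1+A_2=I$) and decompose $P_HN^{n}|_H=B_n+C_n$ with $B_n:=(P_1|_H)^{*}(RW_1)^{n}(P_1|_H)$, $C_n:=(P_2|_H)^{*}(\tfrac1RW_2)^{n}(P_2|_H)$. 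These formulas give $B_{-m}=R^{-2m}B_m^{*}$ and $C_{-m}=R^{2m}C_m^{*}$; inserting them into the two displayed identities yields, for each $m\ge1$, a $2\times2$ linear system over $\mathcal B(H)$ for $B_m,C_m$ whose scalar coefficient matrix has determinant $R^{2m}-R^{-2m}\ne0$, and solving it gives $B_m=\tfrac{1}{2+c}T^{m}$, $C_m=\tfrac{1}{2+c}R^{-2m}(T^{*})^{-m}$. After rescaling the two circles to $\mathbb T$, the sequences $(R^{-n}B_n)$ and $(R^{n}C_n)$ are the moment sequences of the completely positive maps on $C(\mathbb T)$ obtained by compressing to $H$ the $*$-representations $f\mapsto f(N)|_{\operatorname{ran}P_1}$ and $f\mapsto f(N)|_{\operatorname{ran}P_2}$ (which factor through $C(R\mathbb T)$ and $C(\tfrac1R\mathbb T)$); thus they are positive-definite $\mathcal B(H)$-valued sequences with $0$-th terms $A_1$, $A_2$. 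Granting that $A_1,A_2>0$ (see below), conjugating by $A_1^{-1/2}$, resp.\ $A_2^{-1/2}$, makes them unital, and the Naimark dilation produces unitaries $V_1,V_2$ with $\bigl((2+c)A_1\bigr)^{-1/2}(T/R)^{m}\bigl((2+c)A_1\bigr)^{-1/2}=P_HV_1^{m}|_H$ and $\bigl((2+c)A_2\bigr)^{-1/2}\bigl((T^{*})^{-1}/R\bigr)^{m}\bigl((2+c)A_2\bigr)^{-1/2}=P_HV_2^{m}|_H$ for all $m\ge1$. Hence $T/R\in\mathcal C_{(2+c)A_1}$, and since $\mathcal C_B$ is closed under adjoints and $\bigl((T^{*})^{-1}\bigr)^{*}=T^{-1}$, also $T^{-1}/R\in\mathcal C_{(2+c)A_2}$. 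As $(2+c)A_1+(2+c)A_2=(2+c)I$, setting $A:=(2+c)A_2$ produces the asserted $A$ with $0<A<(2+c)I$.

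The converse is the same computation run backwards. Given $A$ with $0<A<(2+c)I$, a unitary $U_1$ on $K_1\supseteq H$ realizing $T/R\in\mathcal C_{2+c-A}$, and a unitary $U_2$ on $K_2\supseteq H$ realizing $T^{-1}/R\in\mathcal C_A$, set $A_1:=I-\tfrac{A}{2+c}$, $A_2:=\tfrac{A}{2+c}$, embed $H$ isometrically into $K_1\oplus K_2$ by $x\mapsto\bigl(\iota_1A_1^{1/2}x,\,\iota_2A_2^{1/2}x\bigr)$, and let $N:=(RU_1)\oplus(\tfrac1RU_2^{*})$, a normal operator with $\sigma(N)\subseteq\partial A_R$. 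A direct computation (reversing the algebra above, using the defining identities of $\mathcal C_{2+c-A}$ and $\mathcal C_A$ together with $A_1^{1/2}(2+c-A)^{-1/2}=A_2^{1/2}A^{-1/2}=(2+c)^{-1/2}$) shows $P_HN^{n}|_H=S_{R,c}(z^{n})$ for all $n\in\mathbb Z$, so by density and self-adjointness $f\mapsto P_Hf(N)|_H$ is a unital completely positive extension of $S_{R,c}$ to $C(\partial A_R)$, and therefore $T\in\mathbb{CDLA}_R(c)$.

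The step I expect to be most delicate, and the only place where $\sigma(T)\subset A_R$ is genuinely used, is the strict positivity of $A_1$ and $A_2$ in the forward direction; without it one would only recover a version with $A$ merely positive rather than strictly positive. Here one argues by contradiction: if $A_1$ is not bounded below, pick unit vectors $x_k\in H$ with $A_1x_k\to0$; then $\|P_1x_k\|^{2}=\langle A_1x_k,x_k\rangle\to0$, so $B_1x_k=(P_1|_H)^{*}(RW_1)(P_1|_H)x_k\to0$ and hence $Tx_k=(2+c)B_1x_k\to0$, contradicting invertibility of $T$; symmetrically, if $A_2$ is not bounded below one gets $(T^{*})^{-1}x_k\to0$, contradicting invertibility of $T^{-1}$. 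The remaining points are routine: matching the operator-system definition of $\mathbb{CDLA}_R(c)$ with the functional-calculus formula for $S_{R,c}$ (Theorem~\ref{doublelayer}), verifying that the monomial identities pin down $S_{R,c}$ on all of $\mathcal S$, and the scalar bookkeeping that converts the two Naimark dilations into the precise weights $2+c-A$ and $A$.
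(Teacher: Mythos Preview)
Your argument is correct and takes a genuinely different route from the paper's. For the ``if'' direction the paper (Lemma~\ref{onedir}) rewrites $S_{R,c}(f)$ as an integral against a modified kernel $\nu_A(\sigma,T)$ that is pointwise positive exactly when the two Langer-class conditions hold; for the ``only if'' direction it takes an Arveson extension $\Psi_{R,c}$ of $S_{R,c}$ to $C(\partial A_R)$, compares it with the natural (non-positive) extension $\psi_{R,c}$, and uses that their difference annihilates the codimension-one subspace $\mathcal M_R$, which forces the difference to be a fixed functional tensored with a single operator---that operator is $A$. Strict positivity then comes from a separate factorization lemma (Lemma~\ref{notinvertCA}). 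You instead pass directly to the Stinespring dilation $N$, split it spectrally along the two boundary circles, and recover $A$ as $(2+c)$ times the compressed spectral projection onto the inner circle; the $2\times2$ linear system for $B_m,C_m$ replaces the codimension-one argument, and your strict-positivity step is a direct contradiction from $B_1=\tfrac{1}{2+c}T$ and $C_1=\tfrac{R^{-2}}{2+c}(T^*)^{-1}$, bypassing Lemma~\ref{notinvertCA} entirely. This is precisely the ``direct proof \ldots\ using only the dilations $U_1,U_{-1}$ and $N$'' that the paper itself, in the discussion just before the proof of Theorem~\ref{2002}, singles out as desirable. The paper's approach keeps the double-layer kernel and the annulus geometry (via $\mathcal M_R$) in the foreground; yours is purely dilation-theoretic and makes the origin of the weight $A$ transparent.
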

Dropping the assumption $\sigma(T)\subset A_R$ leads us to Theorem \ref{droptheorem}. \par 
As remarked previously, the only way the class $\mathbb{QA}_R$ enters in the proof of the $K=1+\sqrt{2}$ spectral estimate in \cite{CrouzGreen} is through the inclusion $\mathbb{QA}_R\subset \mathbb{CDLA}(0)$. This suggests that the spectral constant for $\mathbb{QA}_R$ may coincide with the one for $\mathbb{CDLA}_R(0)$. Utilizing the solution of an extremal problem over $A_R$ due to McCullough and Shen \cite{Mccshen}, we are able to prove a partial result in the setting of $2\times 2$ matrices that supports this idea. 
\begin{theorem}\label{2x2}
Let $c\ge 0$ and assume $T\in \mathbb{DLA}_{R}(c)$ is a $2\times 2$ matrix with a single eigenvalue. Then, $\overline{A_R}$ will be a $K(R)$-spectral set for $T$, where 
$$K(R)=2+c\frac{R^2-1}{R^2+1}\le 2+c.$$
\end{theorem}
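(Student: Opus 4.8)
The plan is to use the invariance properties of $\mathbb{DLA}_R(c)$ to reduce to an explicit $2\times 2$ computation and then to finish using the solution of the relevant extremal problem over $A_R$ due to McCullough and Shen \cite{Mccshen}; here is the outline. Membership in $\mathbb{DLA}_R(c)$, the number $\|f(T)\|$, and the property that $\overline{A_R}$ is a $K$-spectral set for $T$ are all unaffected by replacing $T$ with $VTV^{*}$ for $V$ unitary, and are compatible with replacing $f(z)$ by $f(e^{i\theta}z)$; moreover both $\mathbb{DLA}_R(c)$ and the spectral constant are invariant under $T\mapsto T^{-1}$, because $z\mapsto 1/z$ is an automorphism of $A_R$ which conjugates $S_{R,c}$ to itself. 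A $2\times 2$ matrix with a single eigenvalue is unitarily equivalent to $\left(\begin{smallmatrix}\lambda & t\\ 0 & \lambda\end{smallmatrix}\right)$ with $t\ge 0$, and that eigenvalue lies in $\overline{A_R}$, so after these reductions we may assume $T=\left(\begin{smallmatrix}r & t\\ 0 & r\end{smallmatrix}\right)$ with $1/R< r\le 1$ and $t\ge 0$. The case $t=0$ is trivial (then $\|f(T)\|=|f(r)|\le\|f\|_{\infty}$, so the constant is $1\le K(R)$), so assume $t>0$; note for later that $\|T\|=\tfrac12\big(t+\sqrt{t^{2}+4r^{2}}\big)$.

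For $f\in\mathcal{A}(A_R)$ with constant Laurent coefficient $a_0$, writing $g=C\overline{f}$, one computes directly that
\begin{align*}
f(T)&=\begin{pmatrix} f(r) & t\,f'(r)\\ 0 & f(r)\end{pmatrix}, \\
(2+c)\,S_{R,c}(f)&=\begin{pmatrix} f(r)+\overline{g(r)}+c\,a_0 & t\,f'(r)\\ t\,\overline{g'(r)} & f(r)+\overline{g(r)}+c\,a_0\end{pmatrix},
\end{align*}
where $g(r)$ and $g'(r)$ are explicit linear functionals of the Laurent coefficients of $f$; it is convenient to record also the splitting $S_{R,c}=\tfrac{2}{2+c}\,S_{R,0}+\tfrac{c}{2+c}\,E$ with $E(f)=a_0 I$ the contractive mean-value functional (which incidentally gives $\mathbb{DLA}_R(0)\subseteq\mathbb{DLA}_R(c)$). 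Using the elementary identity $\left\|\left(\begin{smallmatrix}a & b\\ 0 & a\end{smallmatrix}\right)\right\|^{2}=\tfrac12\big(2|a|^{2}+|b|^{2}+|b|\sqrt{|b|^{2}+4|a|^{2}}\big)$, the quantity to be bounded, $\sup_{\|f\|_{\infty}\le 1}\|f(T)\|$, reduces to a one-variable optimization: it equals $\sup_{0\le s\le 1}\left\|\left(\begin{smallmatrix} s & t\,\varphi_R(s,r)\\ 0 & s\end{smallmatrix}\right)\right\|$, where $\varphi_R(s,r)$ is the maximum of $|f'(r)|$ over $f$ in the unit ball of $\mathcal{A}(A_R)$ with $|f(r)|=s$ (this maximum depends on $f(r)$ only through its modulus, by multiplying $f$ by a unimodular constant).

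The missing ingredient is the exact value of $\varphi_R(s,r)$ together with the extremal functions realizing it; this is precisely the extremal problem over $A_R$ solved in \cite{Mccshen}. Granting it, the argument has two parts. First, inserting $\varphi_R(s,r)$ into the norm identity above and maximizing over $s$ yields a closed form for $\sup_{\|f\|_{\infty}\le 1}\|f(T)\|$ in terms of $r,t,R$. Second, evaluating the left-hand matrix above on the McCullough--Shen extremal function turns the hypothesis $T\in\mathbb{DLA}_R(c)$ (that is, $\|S_{R,c}(f)\|\le\|f\|_{\infty}$ for all $f$) into a sharp scalar inequality bounding $t$ in terms of $r$ and $c$. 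Substituting that bound into the closed form from the first part and maximizing over $r\in(1/R,1]$ gives $\sup_{\|f\|_{\infty}\le 1}\|f(T)\|\le K(R)=2+c\,\tfrac{R^{2}-1}{R^{2}+1}$, with the maximum attained at the symmetric eigenvalue $r=1$; as a check, for the subclass $\mathbb{QA}_R$ the constraint at $r=1$ reads exactly $t\le R-1/R$, and $K(R)=2$ when $c=0$.

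The delicate step is the second part above: extracting a \emph{sharp} bound on $t$ from the operator inequality ``$S_{R,c}$ is contractive''. The difficulty is that $S_{R,c}(f)$ registers $f$ not only through $f(r)$ and $f'(r)$ but also through the ``reflected'' Cauchy-transform data $\overline{(C\overline{f})(r)}$ and $\overline{(C\overline{f})'(r)}$, which are not determined by $f(r)$ and $f'(r)$; one therefore cannot read off the $t$-constraint from the second-order Pick body alone, but must locate the function that is simultaneously extremal for $S_{R,c}$ and for that Pick body — which is exactly the role of the explicit McCullough--Shen extremal function, and what produces the clean closed form $2+c\,\tfrac{R^{2}-1}{R^{2}+1}$. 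The concluding maximization over $r$ is routine by comparison.
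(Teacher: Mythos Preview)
Your overall architecture matches the paper's: reduce by unitary equivalence and rotation to $T=\left(\begin{smallmatrix} a & u\\ 0 & a\end{smallmatrix}\right)$, bound $|f'(a)|$ via the McCullough--Shen extremal problem, extract a bound on $|u|$ from membership in $\mathbb{DLA}_R(c)$, and combine. Where you diverge is in the ``delicate step'': you propose to obtain the $t$-constraint by plugging the McCullough--Shen extremal function into the contractivity of $S_{R,c}$, and you assert (but do not verify) that this function is simultaneously extremal for the Pick body and for $S_{R,c}$. That claim is neither obvious nor checked; as you yourself note, $S_{R,c}(f)$ depends on the reflected data $\overline{(C\overline f)(r)}$ and $\overline{(C\overline f)'(r)}$, and there is no a priori reason the Pick extremal should also saturate the $S_{R,c}$ inequality.

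The paper sidesteps this difficulty entirely by not using the contractivity of $S_{R,c}$. Instead it works with the equivalent positivity characterization (Definition~\ref{DLAdef}): for $T$ of this form, the operator inequality $2\Re[(1-e^{i\theta}T)^{-1}+(1-e^{i\psi}qT^{-1})^{-1}]-2+c\ge 0$ is a $2\times 2$ positivity, and taking its determinant at the single choice $\theta=0,\ \psi=\pi$ (when $a\ge\sqrt q$; the symmetric choice otherwise) gives an explicit upper bound on $|u|$. Independently, the McCullough--Shen kernel $k^q(a,a)$ is not used exactly but is replaced by the elementary overestimate $k^q(a,a)\le \frac{1}{1-a^2}+\frac{q}{a^2-q^2}$. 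The point is that these two separate, non-sharp inequalities combine algebraically to give precisely $2+c\frac{1-q}{1+q}$; no simultaneous extremality is needed. So the missing idea in your outline is to abandon the $S_{R,c}$-contractivity route for the $t$-bound and use the raw positivity condition at well-chosen parameters instead.
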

Note that one can also take advantage of the machinery established in \cite{LiCaldwellGreen} and \cite{CrouzGreen} to prove general $K$-spectral estimates for $\mathbb{DLA}_R(c)$ and $\mathbb{CDLA}_R(c)$; see Theorem \ref{generalspectral} (see also Remark \ref{devriesremark}). In fact, our approach yields sharper estimates for certain operator classes; see Remark \ref{improvenum}. \par 
Our paper is organized as follows: Section \ref{prelims} contains a few preliminary lemmata on the $\mathcal{C}_A$ classes. In Section \ref{basic}, we explore basic properties of $\mathbb{DLA}_{R}(c)$, mostly related to certain inclusion and monotonicity results. In Section \ref{completeversion}, we characterize $\mathbb{CDLA}_{R}(c)$ through Theorems \ref{2002} and \ref{droptheorem}. Finally, Section \ref{kspectral} contains the proofs of Theorems \ref{2x2} and \ref{generalspectral}.
 
 \small
\section{PRELIMINARIES} \label{prelims}
\large

\subsection{The $\mathcal{C}_A$ classes} 
This section contains  alternate characterizations of $\mathcal{C}_A$ that are usually easier to work with. We have included proofs for the convenience of the reader. \par 

\begin{lemma} \label{CAchar}
Assume $A\in\mathcal{B}(H)$ is a bounded, strictly positive operator and let $T\in\mathcal{B}(H)$ . Then, $T\in\mathcal{C}_A$ if and only if $\sigma(T)\subset\overline{\mathbb{D}}$ and
\begin{equation} \label{CAdescr}
2\Re(1-zT)^{-1}+A-2\ge 0, \hspace{0.3 cm}  \forall z\in \mathbb{D}.
\end{equation}
\end{lemma}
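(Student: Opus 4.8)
The plan is to recast membership in $\mathcal{C}_A$ as positivity of the real part of a single operator-valued analytic function, and then to move the inequality \eqref{CAdescr} back and forth across the conjugation by $A^{\pm 1/2}$. Concretely, since $A$ is bounded and strictly positive (hence bounded below), $A^{-1/2}\in\mathcal{B}(H)$ is self-adjoint and invertible, so for self-adjoint $X$ one has $X\ge 0$ iff $A^{-1/2}XA^{-1/2}\ge 0$. I would introduce
$$\Phi(z)\;=\;I+2A^{-1/2}\,zT(I-zT)^{-1}A^{-1/2},$$
whenever $I-zT$ is invertible. Using $zT(I-zT)^{-1}=(I-zT)^{-1}-I$ together with the fact that conjugation by a self-adjoint operator commutes with taking real parts, a one-line computation gives
$$\Re\,\Phi(z)=I+A^{-1/2}\big(2\Re(I-zT)^{-1}-2I\big)A^{-1/2}=A^{-1/2}\big(2\Re(I-zT)^{-1}+A-2\big)A^{-1/2}.$$
Hence, on any open set where $\Phi$ is defined, \eqref{CAdescr} holds there if and only if $\Re\,\Phi\ge 0$ there. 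So the lemma reduces to showing that the conjunction ``$\sigma(T)\subset\overline{\mathbb{D}}$ and $\Re\,\Phi\ge 0$ on $\mathbb{D}$'' is equivalent to $T\in\mathcal{C}_A$.

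For the forward implication, I would start from a unitary dilation $U\in\mathcal{B}(K)$, $K\supset H$, with $A^{-1/2}T^nA^{-1/2}=P_HU^n|_H$ for $n\ge 1$. Writing $T^n=A^{1/2}(P_HU^n|_H)A^{1/2}$ gives $\|T^n\|\le\|A^{1/2}\|^2$, so $r(T)\le 1$ and $\sigma(T)\subset\overline{\mathbb{D}}$; in particular $\Phi$ is analytic on $\mathbb{D}$ with the norm-convergent expansion $\Phi(z)=I+2\sum_{n\ge 1}z^nA^{-1/2}T^nA^{-1/2}$. Substituting the dilation relation and summing the geometric series yields $\Phi(z)=P_H(I+zU)(I-zU)^{-1}|_H$, and since $U$ is unitary, $(I+zU)(I-zU)^{-1}$ has real part $(1-|z|^2)(I-\bar z U^*)^{-1}(I-zU)^{-1}\ge 0$ for $z\in\mathbb{D}$. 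Compressing to $H$ gives $\Re\,\Phi\ge 0$, which by the first paragraph is exactly \eqref{CAdescr}.

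For the converse, I would assume $\sigma(T)\subset\overline{\mathbb{D}}$ and \eqref{CAdescr}; then $I-zT$ is invertible for all $z\in\mathbb{D}$, so $\Phi$ is analytic on $\mathbb{D}$ with $\Phi(0)=I$ and $\Re\,\Phi\ge 0$. Invoking the operator-valued Herglotz representation theorem, write $\Phi(z)=\int_{\partial\mathbb{D}}\tfrac{\zeta+z}{\zeta-z}\,dE(\zeta)$ for a $\mathcal{B}(H)$-valued positive measure $E$ with $E(\partial\mathbb{D})=I$; expanding the kernel and matching Taylor coefficients against $\Phi(z)=I+2\sum_{n\ge 1}z^nA^{-1/2}T^nA^{-1/2}$ gives $A^{-1/2}T^nA^{-1/2}=\int_{\partial\mathbb{D}}\bar\zeta^n\,dE(\zeta)$ for every $n\ge 1$. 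Finally, dilate $E$ to a projection-valued measure $F$ on some $K\supset H$ (Naimark's theorem) and set $U=\int_{\partial\mathbb{D}}\bar\zeta\,dF(\zeta)$, which is unitary since $\bar\zeta$ is unimodular; then $A^{-1/2}T^nA^{-1/2}=P_HU^n|_H$ for all $n\ge 1$, i.e. $T\in\mathcal{C}_A$. The only genuinely nonroutine step is this last one --- equivalently, recognizing that $\Re\,\Phi\ge 0$ with $\Phi(0)=I$ forces the block Toeplitz kernel with entries $I$, $A^{-1/2}T^nA^{-1/2}$, $(A^{-1/2}T^nA^{-1/2})^{*}$ to be positive semidefinite, and then applying the Herglotz/Naimark (trigonometric moment problem) machinery; for $A=\rho I$ this is the classical Sz.-Nagy--Foia\c{s} characterization of $\mathcal{C}_\rho$, and the same argument carries over verbatim with $\rho I$ replaced by $A$. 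Everything else is bookkeeping with the resolvent identity and with conjugation by the bounded invertible operator $A^{\pm 1/2}$.
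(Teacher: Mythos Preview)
Your proof is correct, but it takes a genuinely different route from the paper's. The paper quotes the quadratic-form characterization of $\mathcal{C}_A$ from \cite[p.~53]{NagyFoiasbook}, namely
\[
\langle Ah,h\rangle-2\Re\langle z(A-I)Th,h\rangle+|z|^2\langle(A-2I)Th,Th\rangle\ge 0\quad(h\in H,\ |z|\le 1),
\]
and then observes by direct algebra that this is the same as $\langle(A-2)(I-zT)h,(I-zT)h\rangle+2\Re\langle(I-zT)h,h\rangle\ge 0$; the substitution $h\mapsto(I-zT)^{-1}h$ gives \eqref{CAdescr}, and the converse is literally the same computation read backwards. Your argument, by contrast, is self-contained: you bypass the Nagy--Foia\c{s} reference entirely, conjugate by $A^{-1/2}$ to produce the Cayley-type function $\Phi(z)=I+2A^{-1/2}zT(I-zT)^{-1}A^{-1/2}$, and then for each direction appeal to the operator-valued Herglotz representation together with Naimark's dilation theorem. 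The paper's route is shorter and purely algebraic once the cited inequality is granted; yours is more conceptual and shows explicitly how the unitary $U$ arises from the positive real part of $\Phi$, at the cost of invoking heavier (though standard) machinery.
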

\begin{proof}
Assume first that $T\in\mathcal{C}_A$. Since $\mathcal{C}_A\subset \mathcal{C}_{||A||},$ we must have $\sigma(T)\subset\overline{\mathbb{D}}$ (see e.g. \cite[p. 43]{NagyFoiasbook}). Also, in view of  \cite[p.  53]{NagyFoiasbook}, we can deduce that 
$$ 
\langle Ah, h\rangle -2\Re\langle z(A-I)Th, h\rangle +|z|^2\langle (A-2I)Th, Th\rangle \ge 0, 
$$ 
for all $h\in H, |z|\le 1.$ This inequality can be rewritten as 
 $$ 
\langle A(I-zT)h, (I-zT)h\rangle -2\langle (I-zT)h, (I-zT)h\rangle +2\Re \langle (I-zT)h, h\rangle \ge 0
$$  
for all $h\in H, |z|\le 1.$ Setting $h=(I-zT)^{-1}h,$ we obtain
$$\langle (A-2+(I-zT)^{-1}+(1-\overline{z}T^*)^{-1})h, h\rangle \ge 0 $$
for all $h\in H, |z|\le 1,$ which is the desired conclusion. \par 
For the converse, simply roll back the steps in the previous proof. 
\end{proof}
Note that one could define $\mathcal{C}_A$ more generally, for $A$ bounded and self-adjoint, through (\ref{CAdescr}). Using this definition, it is easy to see that $\mathcal{C}_A$ is non-empty if and only if $A\ge 0$ (in which case it will contain the zero operator). 
\begin{lemma} 
Assume $A\in\mathcal{B}(H)$ is a bounded, strictly positive operator and let $T\in\mathcal{B}(H)$ be such that $\sigma(T)\subset \mathbb{D}$ . Then, $T\in\mathcal{C}_A$ if and only if 
\begin{equation} \label{fromztotheta}
(1-e^{i\theta}T)^{-1}+(1-e^{-i\theta}T^*)^{-1}+A-2\ge 0, \hspace{0.3 cm}  \forall \theta\in [0. 2\pi).
\end{equation}              
\end{lemma}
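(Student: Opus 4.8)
The plan is to deduce this from Lemma \ref{CAchar} together with the spectral condition $\sigma(T)\subset\mathbb{D}$, which is now hypothesized rather than merely $\sigma(T)\subset\overline{\mathbb{D}}$. By Lemma \ref{CAchar}, membership $T\in\mathcal{C}_A$ is equivalent to the family of inequalities $2\Re(1-zT)^{-1}+A-2\ge 0$ for all $z\in\mathbb{D}$, i.e. $(1-zT)^{-1}+(1-\overline{z}T^*)^{-1}+A-2\ge 0$ for all $z\in\mathbb{D}$. The content of the present lemma is that, under the stronger spectral assumption, it suffices to check this only on the boundary circle $z=e^{i\theta}$. So the strategy is: (i) show the boundary family (\ref{fromztotheta}) is well-defined, i.e. $1-e^{i\theta}T$ is invertible for every $\theta$; (ii) show (\ref{CAdescr}) $\Rightarrow$ (\ref{fromztotheta}) by a limiting argument; (iii) show (\ref{fromztotheta}) $\Rightarrow$ (\ref{CAdescr}) by a harmonic/subharmonic argument, so that Lemma \ref{CAchar} closes the loop.

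For (i), since $\sigma(T)\subset\mathbb{D}$ we have $\sigma(e^{i\theta}T)\subset\mathbb{D}$, so $1\notin\sigma(e^{i\theta}T)$ and $(1-e^{i\theta}T)^{-1}$ exists and depends norm-continuously on $\theta$; in fact $z\mapsto(1-zT)^{-1}$ is operator-valued holomorphic on a neighborhood of $\overline{\mathbb D}$ (on $\{|z|<1/r(T)\}$ where $r(T)<1$ is the spectral radius). For (ii), assuming (\ref{CAdescr}) holds for all $z\in\mathbb{D}$, fix $\theta$ and let $r\uparrow 1$ in the inequality with $z=re^{i\theta}$; by the continuity just noted, $(1-re^{i\theta}T)^{-1}\to(1-e^{i\theta}T)^{-1}$ in norm, hence the limiting inequality (\ref{fromztotheta}) holds (the cone of positive operators is norm-closed). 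For (iii), conversely, assume (\ref{fromztotheta}). Fix a unit vector $h\in H$ and consider the scalar function $\varphi(z)=\big\langle\big((1-zT)^{-1}+(1-\overline z T^*)^{-1}+A-2\big)h,h\big\rangle$, which is real-valued and harmonic on a neighborhood of $\overline{\mathbb D}$ (being $2\Re$ of the holomorphic function $z\mapsto\langle(1-zT)^{-1}h,h\rangle$ plus a real constant). By (\ref{fromztotheta}), $\varphi\ge 0$ on $\partial\mathbb D$, so by the minimum principle for harmonic functions $\varphi\ge 0$ on $\mathbb D$; since $h$ was arbitrary this is exactly (\ref{CAdescr}), and Lemma \ref{CAchar} gives $T\in\mathcal{C}_A$.

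The main obstacle, such as it is, lies in making sure the regularity needed for the harmonic-function argument in step (iii) is genuinely available — concretely, that $\sigma(T)\subset\mathbb D$ (not just $\subseteq\overline{\mathbb D}$) forces $z\mapsto(1-zT)^{-1}$ to extend holomorphically past the unit circle, so that $\varphi$ is harmonic on an open set containing $\overline{\mathbb D}$ and the minimum principle applies cleanly on $\mathbb D$ with boundary values controlled by (\ref{fromztotheta}). This is where the upgraded hypothesis on the spectrum is used in an essential way; with only $\sigma(T)\subseteq\overline{\mathbb D}$ one would have to argue more carefully about boundary behavior. Everything else is routine: the equivalence with Lemma \ref{CAchar} is immediate once (\ref{CAdescr}) $\Leftrightarrow$ (\ref{fromztotheta}) is established, and the passage between the $z$-form and the $\theta$-form is just the substitution $z=e^{i\theta}$ together with the density of $\{re^{i\theta}\}$ in $\mathbb D$.
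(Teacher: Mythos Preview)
Your proposal is correct and follows essentially the same approach as the paper: for each fixed $h\in H$, consider the scalar function $\varphi_h(z)=\langle(2\Re(1-zT)^{-1}+A-2)h,h\rangle$, observe it is harmonic on $\mathbb{D}$ and continuous on $\overline{\mathbb{D}}$ because $\sigma(T)\subset\mathbb{D}$, and apply the minimum principle to conclude that nonnegativity on $\partial\mathbb{D}$ is equivalent to nonnegativity on $\mathbb{D}$. The paper handles both directions simultaneously via this equivalence, whereas you separate (\ref{CAdescr}) $\Rightarrow$ (\ref{fromztotheta}) by a radial limit and (\ref{fromztotheta}) $\Rightarrow$ (\ref{CAdescr}) by the minimum principle, but the substance is identical.
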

 \begin{proof}
Fix an arbitrary $h\in H$ and define 
$$\Phi_h: \mathbb{D}\to \mathbb{C} $$
$$z\mapsto \big\langle \big(2\Re(1-zT)^{-1}+A-2\big)h, h\big\rangle.$$
Since $\sigma(T)\subset \mathbb{D}$, $\Phi_h$ will be a harmonic function on $\mathbb{D}$ that extends continuously to $\overline{\mathbb{D}}$. By the minimum principle for harmonic functions, we then obtain that $\Phi_h(z)\ge 0,$ for all $z\in\DD,$ if and only if  
 $$\Phi_h(e^{i\theta})=\big\langle \big(2\Re(1-e^{i\theta}T)^{-1}+A-2\big)h, h\big\rangle\ge 0, \hspace{0.3 cm}  \forall \theta\in [0. 2\pi).$$
 Since $h$ was arbitrary, we are done.
 \end{proof}

\par

\subsection{Completely bounded maps} 
  
Let $A\subset \mathcal{B}(H)$ denote an \textit{operator algebra}, i.e. a unital subalgebra of the $C^*$-algebra of bounded linear operators on some Hilbert space $H$. Given a natural number $n \ge 1$, we denote by $M_n(A)$ the algebra of $n\times n$ matrices with entries from $A$, which we view as a subalgebra of bounded linear operators acting on $H^{(n)}=H\oplus H\oplus \dots\oplus H$. In particular, $M_n(A)$ is endowed with a norm under this identification. Given a map $\Phi: A\to\mathcal{B}(H)$, for each $n\ge 1$, we may define the coordinate-wise map $\Phi^{(n)}: M_n(A)\to\mathcal{B}(K^{(n)})$ as
$$\Phi^{(n)}([a_{ij}])=[\Phi(a_{ij})], \hspace{0.3 cm} [a_{ij}]\in M_n(A).$$
 If $\Phi$ is linear (or anti-linear), we say that $\Phi$ is completely bounded if the
quantity $$||\Phi||_{cb}=\sup_n||\Phi^{(n)}|| $$
is finite. We say that $\Phi$ is completely contractive if $||\Phi||_{cb}\le 1.$ Furthermore, $\Phi$ will be called positive if it maps positive elements of $A$ to positive operators in $\mathcal{B}(K).$ $\Phi$ will be said to be completely positive if $\Phi^{(n)}$ is positive for every $n\ge 1.$ For more details on these concepts, see \cite{Paulsenbook}.

 \small 
\section{THE $\mathbb{DLA}_{R}(c)$ CLASS}\label{basic}
 \large 

 \par 
\subsection{The $\mathcal{C}_{s, t}(R)$ classes}
 
Before we dive into the study of $\mathbb{DLA}_{R}(c)$, we will establish a few basic facts regarding $\mathcal{C}_{s, t}(R)$. These are operator classes which form (as will be seen in the next subsection) strict subsets of certain  $\mathbb{DLA}_{R}(c)$ classes.
\begin{definition}
Let $R>1$ and $s, t>0$. Define 
$$\mathcal{C}_{s, t}(R)=\{T\in\mathcal{B}(H)\text{ }|\text{ }T/R\in\mathcal{C}_s \text{ and } T^{-1}/R\in\mathcal{C}_t\}.$$
\end{definition}
First, we record $\mathcal{C}_{s, t}(R)$-membership criteria concerning normal matrices. These are easy consequences of known results about $\mathcal{C}_{\rho}$. In particular, we will be needing the following ``reciprocity" law, as seen in \cite{AndoNishioconvex}. 
\begin{lemma}\label{50}
 Assume $\rho\in (0, 2)$ and $T\in\mathcal{B}(H).$ Then, $T\in\mathcal{C}_{\rho}$ if and only if  $\frac{2-\rho}{\rho}T\in\mathcal{C}_{2-\rho}$.  
\end{lemma}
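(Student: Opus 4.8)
The plan is to reduce each of the two membership statements to the resolvent criterion of Lemma~\ref{CAchar} and then to observe that the two resulting inequalities are interchanged by a single Möbius transformation. Put $\mu=\frac{2-\rho}{\rho}>0$, so that the analogous parameter attached to $2-\rho$ is $\frac{2-(2-\rho)}{2-\rho}=\mu^{-1}$. Since $\rho$ and $2-\rho$ both lie in $(0,2)\subset(0,\infty)$, and $\mathcal{C}_{\rho I}=\mathcal{C}_\rho$, Lemma~\ref{CAchar} applies with $A=\rho I$ and with $A=(2-\rho)I$. Introduce the rational function $\varphi_\rho(w)=\frac{1+\mu w}{1-w}=-\mu+(1+\mu)(1-w)^{-1}$. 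A line of algebra (using $\frac{2-\rho}{2}=\frac{\mu}{1+\mu}$) shows that, for $T$ with $\sigma(T)\subset\overline{\mathbb{D}}$, inequality $(\ref{CAdescr})$ with $A=\rho I$ is equivalent to $\Re\varphi_\rho(zT)\ge 0$ for all $z\in\mathbb{D}$; likewise $\frac{2-\rho}{\rho}T=\mu T$ belongs to $\mathcal{C}_{2-\rho}$ if and only if $\sigma(\mu T)\subset\overline{\mathbb{D}}$ and $\Re\varphi_{2-\rho}(z\mu T)\ge 0$ for all $z\in\mathbb{D}$, where $\varphi_{2-\rho}(w)=\frac{1+\mu^{-1}w}{1-w}$.

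The key algebraic fact is the identity $\varphi_\rho(w)\,\varphi_{2-\rho}(-\mu w)=1$, checked by direct computation; this is precisely why the partner parameter is $2-\rho$. Consequently, whenever both $I-zT$ and $I+\mu zT$ are invertible, $\varphi_{2-\rho}(-\mu zT)=\varphi_\rho(zT)^{-1}$. To exploit this I need $I+\mu zT$ invertible for every $z\in\mathbb{D}$, and I will get this — along with the spectral condition $\sigma(\mu T)\subset\overline{\mathbb{D}}$ — from the estimate $r(T)\le\mu^{-1}=\frac{\rho}{2-\rho}$ for $T\in\mathcal{C}_\rho$. To prove that estimate, pick $\lambda_0\in\sigma(T)$ with $|\lambda_0|=r(T)$ (trivial if $\lambda_0=0$); since $\lambda_0\in\partial\sigma(T)$ it is an approximate eigenvalue, so there are unit vectors $h_n$ with $(T-\lambda_0)h_n\to 0$, and then $\langle(I-zT)^{-1}h_n,h_n\rangle\to(1-z\lambda_0)^{-1}$ for each fixed $z\in\mathbb{D}$ (the resolvent being bounded there because $\sigma(T)\subset\overline{\mathbb{D}}$). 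Testing $(\ref{CAdescr})$ with $A=\rho I$ against the $h_n$ along $z=-s\,\overline{\lambda_0}/|\lambda_0|$ and letting $s\uparrow 1$ gives $\frac{2}{1+|\lambda_0|}\ge 2-\rho$, i.e.\ $r(T)=|\lambda_0|\le\frac{\rho}{2-\rho}$. Hence $\sigma(\mu T)=\mu\sigma(T)\subset\overline{\mathbb{D}}$, and for $z\in\mathbb{D}$ the spectrum of $-\mu zT$ omits $1$, so $I+\mu zT$ is invertible.

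Granting this, the conclusion is formal. Fix $z\in\mathbb{D}$; then $\varphi_{2-\rho}(-\mu zT)=\varphi_\rho(zT)^{-1}$, and since an invertible operator $X$ with $\Re X\ge 0$ satisfies $\Re(X^{-1})=X^{-1}(\Re X)(X^{-1})^{*}\ge 0$, the hypothesis $\Re\varphi_\rho(zT)\ge 0$ gives $\Re\varphi_{2-\rho}(-\mu zT)\ge 0$. As $z$ ranges over $\mathbb{D}$ so does $-z$, hence $\Re\varphi_{2-\rho}(\mu zT)\ge 0$ for all $z\in\mathbb{D}$; combined with $\sigma(\mu T)\subset\overline{\mathbb{D}}$ and the rewriting of the first paragraph, this says exactly that $\mu T\in\mathcal{C}_{2-\rho}$. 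The reverse implication follows by applying what has just been proved with $(\rho,T)$ replaced by $(2-\rho,\mu T)$, noting that the corresponding scaling produces $\mu^{-1}\mu T=T$ and that $2-(2-\rho)=\rho$.

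The one step with real content is the spectral estimate $r(T)\le\rho/(2-\rho)$ in the second paragraph; it matters only when $\rho<1$, since for $\rho\ge 1$ it is already implied by $\sigma(T)\subset\overline{\mathbb{D}}$. Everything else is Möbius bookkeeping, the elementary fact that the inverse of an invertible operator with nonnegative real part again has nonnegative real part, and the $z\mapsto -z$ symmetry of the disk. (Alternatively, one may quote the reciprocity law of Ando and Nishio \cite{AndoNishioconvex} directly.)
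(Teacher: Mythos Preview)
Your argument is correct. The paper itself does not prove Lemma~\ref{50}: it merely records it as the ``reciprocity law'' of Ando and Nishio \cite{AndoNishioconvex} and moves on. What you have written is a self-contained proof built out of tools already in the paper --- the resolvent criterion of Lemma~\ref{CAchar}, the M\"obius identity $\varphi_\rho(w)\,\varphi_{2-\rho}(-\mu w)=1$, and the elementary fact $\Re(X^{-1})=X^{-1}(\Re X)(X^{-1})^*$ --- together with the spectral-radius bound $r(T)\le\rho/(2-\rho)$, which you extract directly from (\ref{CAdescr}) via approximate eigenvalues. This buys independence from the external reference and keeps everything inside the framework of Section~\ref{prelims}; the cost is a page of bookkeeping where the paper spends one citation. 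Your closing parenthetical, quoting \cite{AndoNishioconvex} directly, is literally the paper's own ``proof''.
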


\begin{proposition}\label{51}
Fix $R>1$ and assume $s, t>0.$ Also, let $N\in\mathcal{B}(H)$ be normal.

\begin{itemize}
    \item[(i)] If $s, t\ge 1,$ then $\mathcal{C}_{s, t}(R)$ will always be non-empty. Moreover, $N\in \mathcal{C}_{s, t}(R)$ if and only if $\frac{1}{R^2}\le  N^*N \le R^2.$

    \item[(ii)] If $s\ge 1$ and $t\le 1,$ then $\mathcal{C}_{s, t}(R)$ will be non-empty if and only if   $$ \frac{2}{R^2+1}\le t.$$
    Moreover, $N\in \mathcal{C}_{s, t}(R)$ if and only if
        $\frac{(2-t)^2}{R^2t^2}\le N^*N\le R^2.$
    
    \item[(iii)] If $s\le 1$ and $t\ge 1,$ then $\mathcal{C}_{s, t}(R)$ will be non-empty if and only if $$ \frac{2}{R^2+1}\le s.$$
    Moreover, $N\in \mathcal{C}_{s, t}(R)$ if and only if
        $\frac{1}{R^2}\le N^*N\le R^2\frac{s^2}{(2-s)^2}.$
    
         \item[(iv)] If $s\le 1$ and $t\le 1,$ then $\mathcal{C}_{s, t}(R)$ will be non-empty if and only if $$\frac{1}{R}\frac{2-t}{t}\le R\frac{s}{2-s}.$$ Moreover, $N\in\mathcal{C}_{s, t}(R)$ if and only if
        $\frac{1}{R^2}\frac{(2-t)^2}{t^2}\le N^*N \le R^2\frac{s^2}{(2-s)^2}.$
    \end{itemize}
\end{proposition}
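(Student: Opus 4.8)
The plan is to reduce everything to the classical description of which normal operators lie in $\mathcal{C}_\rho$, and then track how the conditions transform under the scalings $N \mapsto N/R$ and $N \mapsto N^{-1}/R$. The key classical input is that for a normal operator $N$, membership $N \in \mathcal{C}_\rho$ depends only on $\sigma(N)$, and in fact $N \in \mathcal{C}_\rho$ if and only if $\|N\| \le 1$ when $\rho \ge 1$, while for $\rho \in (0,1)$ one has (via the reciprocity law, Lemma \ref{50}) that $N \in \mathcal{C}_\rho$ if and only if $\frac{2-\rho}{\rho}N \in \mathcal{C}_{2-\rho}$ with $2-\rho \ge 1$, i.e. $\|N\| \le \frac{\rho}{2-\rho}$. (The endpoint $\rho = 1$ is the contraction case and $\rho = 2$ is the numerical-radius case, both of which collapse to norm conditions for normal $N$; and since $N$ normal means $\|N\| = \|N^*N\|^{1/2}$ and $N^*N \ge m$ is equivalent to $\|N^{-1}\| \le m^{-1/2}$, all the resulting conditions can be written as two-sided bounds on $N^*N$.) First I would state this normal-operator dictionary as the running lemma for the proof, deriving it from Lemma \ref{50} together with the cited facts that $\mathcal{C}_1$ is the contractions and $\mathcal{C}_2$ is the numerical-radius-$\le 1$ operators.

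Next I would apply the dictionary twice. Writing $N/R \in \mathcal{C}_s$: if $s \ge 1$ this says $\|N/R\| \le 1$, i.e. $N^*N \le R^2$; if $s \le 1$ it says $\|N/R\| \le \frac{s}{2-s}$, i.e. $N^*N \le R^2 \frac{s^2}{(2-s)^2}$. Similarly $N^{-1}/R \in \mathcal{C}_t$: if $t \ge 1$ this says $\|N^{-1}/R\| \le 1$, i.e. $N^*N \ge \frac{1}{R^2}$; if $t \le 1$ it says $\|N^{-1}/R\| \le \frac{t}{2-t}$, i.e. $N^*N \ge \frac{1}{R^2}\frac{(2-t)^2}{t^2}$. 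Intersecting the appropriate lower and upper bounds in each of the four sign regimes gives exactly the four stated spectral characterizations of $N \in \mathcal{C}_{s,t}(R)$ for normal $N$.

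For the non-emptiness statements, I would observe that in each regime $\mathcal{C}_{s,t}(R)$ contains a normal (indeed scalar, or $2\times 2$ diagonal) example precisely when the lower bound on $N^*N$ does not exceed the upper bound — this is a genuine constraint only when the lower bound comes from the $\mathcal{C}_t$ side with $t \le 1$ or the upper bound from the $\mathcal{C}_s$ side with $s \le 1$. Comparing $\frac{1}{R^2}\frac{(2-t)^2}{t^2}$ (or $\frac{1}{R^2}$) against $R^2 \frac{s^2}{(2-s)^2}$ (or $R^2$) yields the displayed inequalities: $\frac{2}{R^2+1} \le t$ in case (ii), $\frac{2}{R^2+1} \le s$ in case (iii), and $\frac{1}{R}\frac{2-t}{t} \le R\frac{s}{2-s}$ in case (iv); in case (i) the bounds $\frac{1}{R^2} \le R^2$ hold automatically since $R > 1$, so the class is always non-empty. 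The one point requiring care — and the main (minor) obstacle — is the direction showing that non-emptiness forces these inequalities: a priori $\mathcal{C}_{s,t}(R)$ could be non-empty while containing no normal element. I would dispatch this by noting that if $T \in \mathcal{C}_{s,t}(R)$ is arbitrary, then compressing to a one-dimensional subspace spanned by a suitable unit vector (using that $\mathcal{C}_\rho$ and hence $\mathcal{C}_{s,t}(R)$ are closed under taking such compressions, a standard property of $\rho$-dilations) produces a scalar in the class, to which the normal characterization already applies; alternatively one simply invokes that $\mathcal{C}_{s,t}(R)$ non-empty $\Rightarrow$ the defining inequalities $\frac{s}{2-s} \ge 0$-type constraints on the parameters are consistent, which is what the displayed conditions encode.
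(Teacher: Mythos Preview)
Your reduction to the normal-operator dictionary for $\mathcal{C}_\rho$ and the case-by-case intersection of the resulting bounds on $N^*N$ is correct and matches the paper's approach exactly (the paper cites \cite[Lemma 4]{BergStampskewdil} for the $\rho\ge 1$ case and uses Lemma \ref{50} for $\rho<1$, just as you do).

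The gap is in your argument that non-emptiness of $\mathcal{C}_{s,t}(R)$ forces the displayed parameter inequalities. Your compression idea does not work as stated: $\mathcal{C}_{s,t}(R)$ is \emph{not} obviously closed under one-dimensional compressions, because compressing $T$ and $T^{-1}$ to the same line $\mathrm{span}\{x\}$ yields the two scalars $\langle Tx,x\rangle$ and $\langle T^{-1}x,x\rangle$, and these are not inverses of one another in general. So even granting that one-dimensional compressions preserve each $\mathcal{C}_\rho$ individually, you cannot conclude that the scalar $\langle Tx,x\rangle$ lies in $\mathcal{C}_{s,t}(R)$. Your alternative (``the defining inequalities are consistent'') is not an argument.

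The paper handles this direction differently and more simply, via spectra rather than compressions. If $T\in\mathcal{C}_{s,t}(R)$, then $T/R\in\mathcal{C}_s$ forces $\sigma(T)\subset\{|z|\le R\}$; and $T^{-1}/R\in\mathcal{C}_t$ combined with Lemma \ref{50} (when $t\le 1$) gives $\frac{2-t}{t}\,T^{-1}/R\in\mathcal{C}_{2-t}$, hence $\sigma(T)\subset\{|z|\ge \frac{2-t}{Rt}\}$. If the parameter inequality fails, these two spectral regions are disjoint, so $\sigma(T)=\varnothing$, a contradiction. This argument applies to arbitrary (not just normal) $T$ and requires only the basic fact $\sigma(S)\subset\overline{\mathbb D}$ for $S\in\mathcal{C}_\rho$.
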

\begin{proof}
We only consider the case $s\ge 1,\ t\le 1$ (the remaining three cases can be proved in essentially identical ways). Since $N$ is normal and $s\ge 1,$ \cite[Lemma 4]{BergStampskewdil} tells us that $N/R\in\mathcal{C}_s$ if and only if $||N||\le R.$ Also, in view of Lemma \ref{50}, we can deduce that $N^{-1}/R\in\mathcal{C}_t$ if and only if 
$\frac{2-t}{Rt}N^{-1}\in\mathcal{C}_{2-t}$, which is equivalent (since $2-t\ge 1$) to the inequality $\frac{(2-t)^2}{R^2t^2}\le N^*N$. \par This proves the second half of (ii). To show the first statement, note that, since $ \frac{2}{R^2+1}\le t$, there exist scalars $a\in\mathbb{C}$ satisfying  $\frac{2-t}{Rt}\le |a| \le R$. In view of our previous result, these scalars must belong to $\mathcal{C}_{s ,t}(R),$ hence the class will be non-empty. Conversely, assume that $\frac{2}{R^2+1}> t$ and that there exists $T\in\mathcal{C}_{s, t}(R)$. Since $T/R\in\mathcal{C}_s$, we obtain $\sigma(T/R)\subset \{|z|\le 1\},$ hence $\sigma(T)\subset \{|z|\le R\}.$ Also, the fact that  $T^{-1}/R\in\mathcal{C}_t$ implies (in view of Lemma \ref{50}) that $(2-t)t^{-1}T^{-1}/R\in\mathcal{C}_{2-t}$, hence $\sigma((2-t)t^{-1}T^{-1}/R)\subset \{|z|\le 1\}.$ From that we can conclude
$$\sigma(T)\subset\{R^{-1}(2-t)t^{-1}\le |z|\le R \}=\emptyset,$$
as $\frac{2}{R^2+1}> t$, a contradiction. Thus, $\mathcal{C}_{s, t}(R)$ must be empty.
\end{proof}
We now show that the $\mathcal{C}_{s, t}(R)$ classes are, in a certain sense, ``rigid" with respect to the parameters $s ,t$. To do this, we require $\mathcal{C}_{s, t}(R)$-membership conditions for $2\times 2$ matrices with a single eigenvalue.

\begin{lemma} \label{90}
    Fix $R>1.$ Also, let $s, t>0$ and $T=\begin{pmatrix} a & b \\ 0 & a \end{pmatrix}
     $, where $a, b\in\mathbb{C}.$
     \begin{itemize}
         \item[(i)] Assume $s, t\ge 1.$  Then, $T\in\mathcal{C}_{s, t}(R)$ if and only if 
$\frac{1}{R}\le |a|\le R$
and $$R|b|-sR^2+(2-s)|a|^2\le 2(1-s)R|a|$$
and
$$R|b|-t|a|^2R^2+(2-t)\le 2(1-t)R|a|.$$

        \item[(ii)] Assume $s\ge 1$ and $t\le 1.$ Then, $T\in\mathcal{C}_{s, t}(R)$ if and only if 
$\frac{1}{R}\frac{2-t}{t}\le |a|\le R$ 
and $$R|b|-sR^2+(2-s)|a|^2\le 2(1-s)R|a|$$
and
$$R|b|-t|a|^2R^2+(2-t)\le 2(t-1)R|a|.$$
        \item[(iii)] Assume $s\le 1$ and $t\ge 1.$ Then, $T\in\mathcal{C}_{s, t}(R)$ if and only if 
$\frac{1}{R}\le |a|\le R\frac{s}{2-s}$ 
and $$R|b|-sR^2+(2-s)|a|^2\le 2(s-1)R|a|$$
and
$$R|b|-t|a|^2R^2+(2-t)\le 2(1-t)R|a|.$$
                 \item[(iv)] Assume $s\le 1$ and $t\le 1.$
                 Then, $T\in\mathcal{C}_{s, t}(R)$ if and only if 
$\frac{1}{R}\frac{2-t}{t}\le |a|\le R\frac{s}{2-s}$
and $$R|b|-sR^2+(2-s)|a|^2\le 2(s-1)R|a|$$
and
$$R|b|-t|a|^2R^2+(2-t)\le 2(t-1)R|a|.$$

     \end{itemize}
 \end{lemma}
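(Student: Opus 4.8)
The plan is to isolate one clean ingredient — a description of when a single $2\times 2$ Jordan block $M=\begin{pmatrix} \alpha & \beta \\ 0 & \alpha\end{pmatrix}$ (with $\alpha\neq 0$) lies in $\mathcal{C}_\rho$ — and then read off Lemma \ref{90} using only the definition $\mathcal{C}_{s,t}(R)=\{T : T/R\in\mathcal{C}_s,\ T^{-1}/R\in\mathcal{C}_t\}$: apply the ingredient once to $T/R=\begin{pmatrix} a/R & b/R \\ 0 & a/R\end{pmatrix}$ with $\rho=s$, and once to $T^{-1}/R=\begin{pmatrix} 1/(aR) & -b/(a^2R) \\ 0 & 1/(aR)\end{pmatrix}$ with $\rho=t$. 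The four sign regimes $s\gtrless 1$, $t\gtrless 1$ produce exactly the four items (i)--(iv), so once the ingredient is in hand the remainder is bookkeeping, much as in the proof of Proposition \ref{51}.

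For the ingredient I would invoke Lemma \ref{CAchar} with $A=\rho I$ (so $\mathcal{C}_A=\mathcal{C}_\rho$ and $\rho I$ is strictly positive): $M\in\mathcal{C}_\rho$ if and only if $|\alpha|\le 1$ and $2\Re(1-zM)^{-1}+(\rho-2)I\ge 0$ for every $z\in\mathbb{D}$. Inverting the Jordan block explicitly, this Hermitian matrix has equal diagonal entries $d(z)=2\Re\frac{1}{1-z\alpha}+\rho-2$ and off-diagonal entry $\frac{z\beta}{(1-z\alpha)^2}$, hence is positive semidefinite precisely when $d(z)\ge 0$ and $d(z)^2\ge |z|^2|\beta|^2/|1-z\alpha|^4$; multiplying through by $|1-z\alpha|^2>0$ (legitimate for $z\in\mathbb{D}$, even when $|\alpha|=1$) this is equivalent to
\[
F(z):=2-2\Re(z\alpha)+(\rho-2)|1-z\alpha|^2-|z|\,|\beta|\ \ge\ 0 .
\]
Since $F$ is continuous on $\overline{\mathbb{D}}$, asking $F\ge 0$ on $\mathbb{D}$ is the same as asking $\min_{\overline{\mathbb{D}}}F\ge 0$; this continuity step absorbs the only mildly delicate point, $|\alpha|=1$ (where $(1-zM)^{-1}$ has a single boundary singularity), and one checks directly that $|\alpha|=1$ forces $\beta=0$ and $\rho\ge1$, consistently with the inequalities below.

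It then remains to minimize $F$ over $\overline{\mathbb{D}}$. Writing $w=z\alpha$, so $|z|=|w|/|\alpha|$ and $0\le|w|\le|\alpha|$, one gets $F=\rho+(\rho-2)|w|^2-2(\rho-1)\Re w-\frac{|\beta|}{|\alpha|}|w|$; minimizing first over $\arg w$ at fixed $|w|=\tau$ forces $\Re w=\tau$ when $\rho\ge1$ and $\Re w=-\tau$ when $\rho\le1$, leaving a quadratic $\phi(\tau)$ on $[0,|\alpha|]$ with $\phi(0)=\rho>0$. A short case check finishes it: for $\rho<2$, $\phi$ is a downward parabola, so its minimum on $[0,|\alpha|]$ occurs at an endpoint; for $\rho>2$, $\phi$ is an upward parabola whose vertex sits at $\tau^*>\frac{\rho-1}{\rho-2}>1\ge|\alpha|$, so $\phi$ is decreasing on $[0,|\alpha|]$. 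In every case $\min_{\overline{\mathbb{D}}}F\ge 0\iff\phi(|\alpha|)\ge0$, that is, $M\in\mathcal{C}_\rho$ if and only if $|\alpha|\le1$ and
\[
|\beta|+(2-\rho)|\alpha|^2-\rho\le 2(1-\rho)|\alpha|\ \ (\rho\ge1),\qquad |\beta|+(2-\rho)|\alpha|^2-\rho\le 2(\rho-1)|\alpha|\ \ (\rho\le1);
\]
moreover, inspecting the case $\beta=0$ shows that when $\rho\le1$ this already forces $|\alpha|\le\rho/(2-\rho)$.

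Specializing $(\alpha,\beta,\rho)$ to $(a/R,\,b/R,\,s)$ and to $(1/(aR),\,-b/(a^2R),\,t)$ and clearing denominators turns the two displayed inequalities into the two inequalities appearing in each of (i)--(iv); the constraints $|\alpha|\le1$ become $|a|\le R$ (from $T/R$) and $|a|\ge1/R$ (from $T^{-1}/R$), and, using the extra fact just noted, when $s\le1$ the first inequality already yields $|a|\le R\,s/(2-s)$ and when $t\le1$ the second already yields $|a|\ge\frac1R\frac{2-t}{t}$, producing the stated ranges for $|a|$. The difficulty here is organizational rather than conceptual: the main thing to watch is keeping the $\rho\gtrless1$ and $\rho\gtrless2$ sub-cases of the one-variable minimization straight and then matching them correctly against the four $(s,t)$ regimes — all the underlying estimates are elementary.
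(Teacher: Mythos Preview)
Your argument is correct and is precisely the ``verify the positivity conditions directly'' route the paper mentions; the paper itself gives no details beyond that remark and the alternative citation of Okubo--Spitkovsky \cite[Theorem 3.1]{OkuboSpitkovsky2x2}. Your handling of the minimization (including the vertex location $\tau^*\ge\frac{\rho-1}{\rho-2}>1$ for $\rho>2$ and the endpoint check for $\rho\le 2$) and the clearing of denominators in the two specializations are all sound.
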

 \begin{proof}
We can either verify the positivity conditions directly or simply invoke \cite[Theorem 3.1]{OkuboSpitkovsky2x2}.
 \end{proof}

\begin{theorem} \label{110}
Assume $s,t>0$ and $R>1$ are such that $\mathcal{C}_{s, t}(R)$ is non-empty. Then, there exists $T\in \mathcal{C}_{s, t}(R)$ such that for every $\epsilon>0$ and every $s', t'>0$ we have 
$$T\notin\mathcal{C}_{s', t-\epsilon}(R) \hspace{0.3 cm} \text{ and } \hspace{0.3 cm} T\notin\mathcal{C}_{s-\epsilon, t'}(R).$$
\end{theorem}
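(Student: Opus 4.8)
The plan is to exhibit a single extremal matrix $T$ — a $2\times 2$ Jordan block $T=\begin{pmatrix} a & b \\ 0 & a\end{pmatrix}$ — whose membership in $\mathcal C_{s,t}(R)$ is ``tight'' in both parameters, and to read off the rigidity from the membership inequalities of Lemma \ref{90}. The key point is that the inequalities in Lemma \ref{90} (in each of the four cases) are of the form: a two-sided bound on $|a|$, plus two affine-in-$|b|$ inequalities, one ``governed by $s$'' and one ``governed by $t$.'' So the strategy is: first choose $|a|$ to lie on the boundary of the allowed interval — specifically at whichever endpoint is dictated by the $t$-side when we want $t$-rigidity, and simultaneously arrange matters so that the $s$-inequality is also saturated. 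Then choose $|b|>0$ as large as the two inequalities jointly permit, so that $T$ sits on the intersection of both constraint surfaces. Because $b\neq 0$, $T$ is a genuine (non-normal) Jordan block, and a small perturbation of either parameter downward will violate one of the two saturated inequalities.

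Concretely, I would treat the case $s,t\ge 1$ first (the other three are handled identically, just with the signs on the right-hand sides of the Lemma \ref{90} inequalities flipped as recorded there, and the interval endpoints changed accordingly). Here $\mathcal C_{s,t}(R)$ nonempty forces, via Proposition \ref{51}(i), that the interval $1/R\le |a|\le R$ is nontrivial, which it always is for $R>1$. Pick $|a|=1/R$ (the $t$-critical endpoint: this is the value at which $T^{-1}/R$ is ``most constrained''), and then let $|b|$ be the largest value for which both
$$R|b| - sR^2 + (2-s)|a|^2 \le 2(1-s)R|a|, \qquad R|b| - t|a|^2R^2 + (2-t) \le 2(1-t)R|a|$$
hold. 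The claim is that for the extremal choice, \emph{both} become equalities; if only the $t$-inequality were tight one would increase $|a|$ slightly off the endpoint to gain slack on the $t$-side while keeping $|b|$ fixed, and re-optimize $|b|$ upward, contradicting maximality of the construction — so a short optimization argument pins down that a genuinely extremal pair saturates both. Once both are equalities with $b\neq 0$: decreasing $t$ to $t-\epsilon$ strictly worsens the second inequality (one checks the left side is strictly increasing in $-t$ for fixed $a,b$, using $|a|\le R$), so $T^{-1}/R\notin\mathcal C_{t-\epsilon}$ and hence $T\notin\mathcal C_{s',t-\epsilon}(R)$ for \emph{every} $s'$; symmetrically for the $s$-side, decreasing $s$ breaks the first equality. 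One must double-check $|a|=1/R$ is compatible with the $s$-side interval endpoint too — in case (i) the lower bound on $|a|$ is exactly $1/R$, so it is; in the mixed cases one picks the endpoint forced by the ``$\le 1$'' parameter, which is precisely the one appearing in the nonemptiness criterion, so compatibility is automatic from the hypothesis that $\mathcal C_{s,t}(R)\neq\emptyset$.

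The main obstacle I anticipate is not any single computation but the bookkeeping: verifying in each of the four regimes of $(s,t)$ that (a) the endpoint of the $|a|$-interval one must sit at is the one ``owned'' by the same parameter whose rigidity is being proved, (b) at that endpoint the two affine-in-$|b|$ inequalities can be made simultaneously tight with $|b|>0$ (this requires checking the two lines in the $(|a|,|b|)$-plane actually meet at a point with positive $b$-coordinate, which is where the nonemptiness hypothesis gets used quantitatively), and (c) the monotonicity in $s$ and in $t$ of the relevant inequality has the right sign near the extremal point. I would organize the write-up by doing case $s,t\ge 1$ in full detail, then remarking that cases (ii)–(iv) follow by the same scheme using the correspondingly modified inequalities of Lemma \ref{90} and Proposition \ref{51}, perhaps via the reciprocity law of Lemma \ref{50} to reduce, say, the $t\le 1$ behavior to a $2-t\ge 1$ statement and thereby avoid repeating the argument verbatim.
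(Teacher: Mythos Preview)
Your overall strategy --- exhibit a $2\times 2$ Jordan block $T=\begin{pmatrix} a & b\\ 0 & a\end{pmatrix}$ for which both membership inequalities of Lemma~\ref{90} are simultaneously saturated, then observe that decreasing either parameter strictly violates the corresponding inequality --- is exactly the paper's approach. The gap is in the existence step, which is where the real work lies.

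First, your opening move in case $s,t\ge 1$ does not get off the ground: at $|a|=1/R$ the $t$-inequality reads $R|b|\le 0$, forcing $b=0$, and the resulting scalar lies in $\mathcal C_{s',t'}(R)$ for \emph{every} $s',t'\ge 1$ (Proposition~\ref{51}(i)), so it has no rigidity whatsoever. You acknowledge this implicitly by proposing to ``increase $|a|$ slightly off the endpoint \dots\ contradicting maximality of the construction,'' but it is unclear what quantity is being maximized or why the maximizer must have both constraints tight with $|a|$ \emph{strictly} interior. That strict interiority is essential: under $t\mapsto t-\epsilon$ the slack in the $t$-inequality changes by $\epsilon(R|a|-1)^2$, and under $s\mapsto s-\epsilon$ the $s$-slack changes by $\epsilon(R-|a|)^2$, so you need $|a|$ bounded strictly away from \emph{both} endpoints --- not merely ``$|a|\le R$'' as you write.

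The paper does not attempt a soft optimization argument. It sets both inequalities to equality and eliminates $|b|$, obtaining a quadratic in $|a|$ (equation~\eqref{99} in the paper). The entire substance of the proof is then the verification --- via explicit discriminant and root computations, using the nonemptiness hypothesis quantitatively --- that this quadratic has a root strictly inside the required interval, and that the resulting $|b_0|$ is well-defined. This algebra is not short and is not replaceable by an abstract maximality principle; it (or an equivalent intermediate-value argument showing the two constraint curves cross at an interior point with positive common value) is what your sketch must be replaced with.
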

\begin{proof}
We only deal with the case $s\ge 1, t\le 1$, as the computations required for the remaining three cases are very similar in nature. \par 
So, let $s\ge 1, t\le 1$ and assume also that $(2-t)/(Rt)\le R,$ hence $\mathcal{C}_{s, t}(R)$ is non-empty. Note that if $(2-t)/(Rt)=R$, then any class either of the form $\mathcal{C}_{s', t-\epsilon}(R)$ or of the form $\mathcal{C}_{s-\epsilon, t'}(R)$ will be empty. Obviously, the conclusion of the theorem will hold in this case. Thus, we may actually assume that $(2-t)/(Rt)<R.$ We are looking for an operator that satisfies the conditions in the statement of the theorem and has the form  $T=\begin{pmatrix} a & b \\ 0 & a \end{pmatrix}
     $, where $a, b\in\mathbb{C}.$ In view of Lemma (\ref{90}), we know that such a  $T$ will belong to $\mathcal{C}_{s, t}(R)$ if and only if 
$\frac{1}{R}\frac{2-t}{t}\le |a|\le R$
and \begin{equation}\label{95}
 R|b|-sR^2+(2-s)|a|^2\le 2(1-s)R|a|   
\end{equation}
and
\begin{equation} \label{96}
 R|b|-t|a|^2R^2+(2-t)\le 2(t-1)R|a|.   
\end{equation}
Our goal is to find a pair $(a_0, b_0)$ such that $(2-t)/(Rt)<|a_0|<R$ and both (\ref{95}) and  (\ref{96}) become equalities. Indeed, assuming such $a_0$ and $b_0$ exist, let $T_0$ denote the corresponding operator and consider any $\epsilon>0.$ We will then have that $T_0\in \mathcal{C}_{s, t}(R)$ and also that 
$$R|b_0|-(s-\epsilon)R^2+(2-(s-\epsilon))|a_0|^2+2((s-\epsilon)-1)R|a_0|$$
$$=R|b_0|-sR^2+(2-s)|a_0|^2+2(s-1)R|a_0|+\epsilon (R^2+|a_0|^2-2R|a_0|) $$
$$=\epsilon(R-|a_0|)^2>0,$$
which shows that $ T_0\notin\mathcal{C}_{s-\epsilon, t'}(R),$ for any $t'>0$. An analogous argument involving the second inequality also shows that $T_0\notin\mathcal{C}_{s', t-\epsilon}(R)$ for any $s'>0$, as desired. \par 
Now, assume that we have equality in both (\ref{95}) and (\ref{96}) (without any extra restrictions). We can then extract a quadratic equation involving $|a|$ only: 
\begin{equation}\label{99}
(s-tR^2-2)|a|^2+2(2-s-t)R|a|+2-t+sR^2=0   
\end{equation}
Notice that, if we are able to find a solution $|a_0|$ of the above equation that also satisfies $(2-t)/(Rt)<|a_0|<R,$  our proof will be complete. Indeed, $|b_0|$ will then be uniquely determined as $$ |b_0|=sR+\frac{(s-2)|a_0|^2}{R}+ 2(1-s)|a_0|   
=t|a_0|^2R+\frac{t-2}{R}+2(t-1)|a_0|$$
and the associated $T_0$ will have the desired properties.
\par Assume first that $s=tR^2+2.$ (\ref{99}) then turns into the linear equation  $$2(2-s-t)R|a|+2-t+sR^2=0,$$ where $s=tR^2+2$. Let $a_0$ be any number satisfying the previous equation, i.e. 
     $$|a_0|=\frac{2(R^2+1)+t(R^4-1)}{2R(R^2+1)t}.$$
     One can then verify that $|a_0|<R$ is equivalent to the inequality 
     $\frac{2}{R^2+1}<t,$
     which is equivalent to $(2-t)/(Rt)<R,$ hence it must be true. An analogous argument applies for the inequality $|a_0|>(2-t)/(Rt).$
\par Assume now that $s\neq tR^2+2.$ After some calculations, we obtain (for equation (\ref{99})) the (non-negative) discriminant
$$\Delta=4[(R^2+1)^2st-2(R^2+1)(s+t)+4(R^2+1)].$$
Choose $a_0$ to be any number satisfying 
$$|a_0|=\frac{2R(s+t-2)-\sqrt{\Delta}}{2(s-tR^2-2)}.$$
Suppose first that $s>tR^2+2$. The inequality $|a_0|<R$ is equivalent to
$$tR^2+2=\frac{R^2(R^2+1)^2t^2+2(R^2+1)t-4(R^2+1)}{(R^2+1)((R^2+1)t-2)}<s,$$
hence it must be true. The proof in the case that $s<tR^2+2$ is entirely analogous. \par 
We now prove that $|a_0|>(2-t)/(tR).$ The computations here are somewhat more unpleasant. First, we assume that $s>tR^2+2$, in which case $|a_0|>(2-t)/(tR)$ is equivalent to the inequality
$$\frac{[(R^2+1)t-2]^2}{R^2}s^2+\Big(-\frac{16}{R^2}+\frac{8}{R^2}(R^2+2)t+2(R^2+1)[1-2/R^2]t^2-(R^2+1)^2t^3 \Big)s$$ $$+\frac{16}{R^2}-\frac{16}{R^2}t+4(1/R^2-R^2-1)t^2+2(R^2+1)t^3>0.$$
 But this last inequality can be rewritten as
$$\frac{1}{R^2}((R^2+1)t-2)(s-tR^2-2)([(R^2+1)t-2]s+2(2-t))\ge 0,$$ which holds for $s>tR^2+2$ and $(R^2+1)>(R^2+1)t>2,$ as desired. The proof in the case that $s<tR^2+2$ is entirely analogous. \end{proof}

 \par 
\subsection{Basic properties of $\mathbb{DLA}_R(c)$}
We now define a new operator class attached to the annulus $A_R.$ 
\begin{definition}\label{DLAdef}
Let $c\in\mathbb{R}$ and $R>1$. $\mathbb{DLA}_R(c)$ denotes the class of all operators $T\in\mathcal{B}(H)$ such that 
\begin{itemize}
    \item[(i)] $\sigma(T)\subset\overline{A_R}$ and 
    \item[(ii)] $2\Re\big[(1-zT/R)^{-1}+(1-wT^{-1}/R)^{-1}\big]-2+c\ge 0, \hspace{0.4 cm} \forall z, w\in\mathbb{D}.$
\end{itemize}
\end{definition}
 First, we prove a few elementary properties of $\mathbb{DLA}_R(c)$, including membership criteria for normal operators. 
 \begin{lemma} \label{CA, C(2-A)}
 Let $c>-2,$ $R>1$ and assume $A\in\mathcal{B}(H)$ is a self-adjoint operator such that $A>0$ and $2+c-A>0.$ If $T\in\mathcal{B}(H)$ is such that $T/R\in\mathcal{C}_{2+c-A}$ and $T^{-1}/R\in\mathcal{C}_A,$ then $T\in\mathbb{DLA}_R(c).$
 \end{lemma}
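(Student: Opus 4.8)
The plan is to use the characterization of $\mathcal{C}_A$ given in Lemma \ref{CAchar}, which translates membership into a concrete positivity condition on resolvents. The hypotheses $T/R \in \mathcal{C}_{2+c-A}$ and $T^{-1}/R \in \mathcal{C}_A$ should be unpacked via this lemma (applied once with the positive operator $2+c-A$ and once with $A$), yielding two inequalities of the form \eqref{CAdescr}: namely $2\Re(1-zT/R)^{-1} + (2+c-A) - 2 \ge 0$ for all $z \in \mathbb{D}$, and $2\Re(1-wT^{-1}/R)^{-1} + A - 2 \ge 0$ for all $w \in \mathbb{D}$. Note that Lemma \ref{CAchar} also hands us $\sigma(T/R) \subset \overline{\mathbb{D}}$ and $\sigma(T^{-1}/R) \subset \overline{\mathbb{D}}$, which together give condition (i) of Definition \ref{DLAdef}, i.e. $\sigma(T) \subset \overline{A_R}$.

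The key step is then simply to add the two inequalities. Summing
\[
2\Re(1-zT/R)^{-1} + (2+c-A) - 2 \ge 0
\qquad\text{and}\qquad
2\Re(1-wT^{-1}/R)^{-1} + A - 2 \ge 0
\]
gives
\[
2\Re\big[(1-zT/R)^{-1} + (1-wT^{-1}/R)^{-1}\big] + (2+c-A) + A - 4 \ge 0,
\]
and since $(2+c-A) + A - 4 = c - 2$, this is exactly condition (ii) of Definition \ref{DLAdef}. Thus $T \in \mathbb{DLA}_R(c)$. The only mild subtlety is that Lemma \ref{CAchar} is stated for strictly positive $A$, but the hypotheses here are precisely $A > 0$ and $2+c-A > 0$, so the lemma applies verbatim to both operators; one should remark that $2+c-A$ being bounded and bounded below follows from $A$ being bounded and $2+c-A > 0$.

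I do not anticipate a genuine obstacle here — the statement is essentially a bookkeeping consequence of the resolvent characterization in Lemma \ref{CAchar}, with the arithmetic identity $(2+c-A)+A-4 = c-2$ doing all the real work. The one point requiring a sentence of care is checking that the spectral inclusions combine correctly: $\sigma(T/R) \subset \overline{\mathbb{D}}$ means $\sigma(T) \subset \{|z| \le R\}$, while $\sigma(T^{-1}/R) \subset \overline{\mathbb{D}}$ means $\sigma(T^{-1}) \subset \{|z| \le R\}$, hence $\sigma(T) \subset \{|z| \ge 1/R\}$; intersecting yields $\sigma(T) \subset \overline{A_R}$ (in particular $T$ is invertible, so that $T^{-1}/R$ is well-defined to begin with, as is implicit in the hypothesis). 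The converse inclusion — that every $T \in \mathbb{DLA}_R(c)$ arises this way for some $A$ — is not claimed here and would be the substantive direction; this lemma is only the easy half.
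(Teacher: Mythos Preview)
Your proposal is correct and follows essentially the same approach as the paper: apply Lemma~\ref{CAchar} to each hypothesis to obtain the spectral inclusions and the two resolvent inequalities, then add the latter to recover condition~(ii) of Definition~\ref{DLAdef}. The paper's proof is in fact slightly terser than yours, omitting the explicit arithmetic and the verification that the spectral inclusions combine to give $\sigma(T)\subset\overline{A_R}$.
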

\begin{proof}
Assume $T\in\mathcal{B}(H)$ satisfies our hypotheses. Lemma \ref{CAchar} allows us to deduce that $\sigma(T/R), \  \sigma(T^{-1}/R)\subset\overline{\mathbb{D}}$, hence $\sigma(T)\subset\overline{A_R}$. Also, (\ref{CAdescr}) gives us 
$$2\Re(1-zT/R)^{-1}+2+c-A-2\ge 0 $$
and $$2\Re(1-wT^{-1}/R)^{-1}+A-2\ge 0, $$
for all $z, w\in\mathbb{D}.$ Adding these two inequalities concludes the proof.
\end{proof}
\begin{proposition}\label{DLAstuff}
Let $c\in\mathbb{R}$, $R>1$ and assume $N\in\mathcal{B}(H)$ is normal.
\begin{itemize}
    \item[(i)] If $c>-2,$ then $\mathbb{DLA}_R(c)$ will be non-empty if and only if there exists $s\in (0, 2+c)$ such that $\mathcal{C}_{2+c-s, s}(R)$ is non-empty. \\
    In particular, if $c\ge 0,$ then $\mathbb{DLA}_R(c)$  will be non-empty for every $R>1.$
    \item[(ii)] If $c\le -2,$ then $\mathbb{DLA}_R(c)$ will be empty for every $R>1.$
    \item[(iii)] $\mathbb{DLA}_R(c)\subset \mathcal{C}_{2-c, 2-c}(R),$ for every $c>-2. $
    \item[(iv)] Assume $c>-2.$ Then, $N\in \mathbb{DLA}_R(c)$ if and only if 
    $$\sigma(N)\subset \bigcup_{0<s<2+c}\mathcal{C}_{2+c-s, s}(R).$$
    \item[(v)] If, in addition, we assume $c\ge 0,$ then $N\in \mathbb{DLA}_R(c)$ is equivalent to $R^{-2}\le N^*N\le R^2.$
\end{itemize}
\end{proposition}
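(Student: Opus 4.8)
The plan is to reduce all four assertions to one scalar computation and then bootstrap.

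\emph{The scalar core.} For $\lambda\in\mathbb{C}$ and $c>-2$ I would first show that the scalar operator $\lambda$ belongs to $\mathbb{DLA}_R(c)$ if and only if $\tfrac1R\le|\lambda|\le R$ and
\[
c\ \ge\ 2-\frac{2R}{R+|\lambda|}-\frac{2R|\lambda|}{R|\lambda|+1},
\]
and, moreover, that this happens precisely when $\lambda\in\mathcal{C}_{2+c-s,\,s}(R)$ for some $s\in(0,2+c)$. For the first equivalence, observe that $w\mapsto(1-w)^{-1}$ carries $\{|\zeta|\le r\}$ onto a closed disc whose least real part equals $\tfrac1{1+r}$; applying this with $r=|\lambda|/R$ and with $r=1/(R|\lambda|)$ gives $\inf_{z\in\mathbb{D}}2\Re(1-z\lambda/R)^{-1}=\tfrac{2R}{R+|\lambda|}$ and $\inf_{w\in\mathbb{D}}2\Re(1-w\lambda^{-1}/R)^{-1}=\tfrac{2R|\lambda|}{R|\lambda|+1}$, so condition (ii) of Definition \ref{DLAdef} for the scalar $\lambda$ is exactly the displayed inequality (the left side of that condition is defined on all of $\mathbb{D}^{2}$ only when $|\lambda|\in[\tfrac1R,R]$). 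For the second equivalence, the scalar case of Lemma \ref{CAchar} reads $\mu\in\mathcal{C}_\rho\iff|\mu|\le1$ and $\rho\ge\tfrac{2|\mu|}{1+|\mu|}$; feeding in $\mu=\lambda/R,\ \rho=2+c-s$ and $\mu=\lambda^{-1}/R,\ \rho=s$ shows $\lambda\in\mathcal{C}_{2+c-s,s}(R)$ exactly when $s$ ranges over $\bigl[\tfrac{2}{R|\lambda|+1},\ 2+c-\tfrac{2|\lambda|}{R+|\lambda|}\bigr]$, whose nonemptiness — after the identities $\tfrac{2|\lambda|}{R+|\lambda|}+\tfrac{2R}{R+|\lambda|}=2$ and $\tfrac{2R|\lambda|}{R|\lambda|+1}+\tfrac{2}{R|\lambda|+1}=2$ — is again the displayed inequality. (The ``if'' half of the second equivalence is also immediate from Lemma \ref{CA, C(2-A)} with $A=sI$.)

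\emph{Part (iii), and an operator-to-scalar bridge.} Part (iii) is quick and self-contained: setting $w=0$ (resp.\ $z=0$) in Definition \ref{DLAdef}(ii) yields $2\Re(1-zT/R)^{-1}+(2+c)-2\ge0$ (resp.\ the analogue with $T^{-1}/R$), which together with $\sigma(T/R)\subset\overline{\mathbb{D}}$ says by Lemma \ref{CAchar} that $T/R\in\mathcal{C}_{2+c}$ (resp.\ $T^{-1}/R\in\mathcal{C}_{2+c}$), so $T\in\mathcal{C}_{2+c,\,2+c}(R)$. For parts (i)--(ii) I would use: if $T\in\mathbb{DLA}_R(c)$ and $\lambda\in\partial\sigma(T)$, then the scalar $\lambda$ lies in $\mathbb{DLA}_R(c)$. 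Indeed $\partial\sigma(T)$ lies in the approximate point spectrum, so pick unit vectors $x_n$ with $\|(T-\lambda)x_n\|\to0$; since $\sigma(T)\subset\overline{A_R}$ forces $0\notin\sigma(T)$, the identity $T^{-1}-\lambda^{-1}=-\lambda^{-1}T^{-1}(T-\lambda)$ gives $\|(T^{-1}-\lambda^{-1})x_n\|\to0$ for the same $x_n$; a one-line resolvent estimate then yields $\langle(1-zT/R)^{-1}x_n,x_n\rangle\to(1-z\lambda/R)^{-1}$ and $\langle(1-wT^{-1}/R)^{-1}x_n,x_n\rangle\to(1-w\lambda^{-1}/R)^{-1}$ for all $z,w\in\mathbb{D}$, so testing Definition \ref{DLAdef}(ii) against $x_n$ and letting $n\to\infty$ gives the scalar positivity for $\lambda$, while $\lambda\in\sigma(T)\subset\overline{A_R}$ supplies Definition \ref{DLAdef}(i).

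\emph{Parts (i) and (ii).} For (i), ``$\Leftarrow$'' is Lemma \ref{CA, C(2-A)} applied to any $T\in\mathcal{C}_{2+c-s,s}(R)$ (with $A=sI$, valid since $0<s<2+c$); ``$\Rightarrow$'' follows by choosing $\lambda\in\partial\sigma(T)$ (nonempty, as $\sigma(T)$ is a nonempty compact set), applying the bridge to get the scalar $\lambda\in\mathbb{DLA}_R(c)$, and then the scalar core to produce $s\in(0,2+c)$ with $\lambda\in\mathcal{C}_{2+c-s,s}(R)$. For the ``in particular'' clause, when $c\ge0$ one has $1\in(0,2+c)$ and $\mathcal{C}_{1+c,\,1}(R)\ne\emptyset$ by Proposition \ref{51}(i), since both indices are $\ge1$. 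For (ii): if $c<-2$, evaluating Definition \ref{DLAdef}(ii) at $z=w=0$ gives $(2+c)I\ge0$, which is false; if $c=-2$ and $T\in\mathbb{DLA}_R(-2)$, the bridge yields a scalar $\lambda\in\mathbb{DLA}_R(-2)$ with $|\lambda|\in[\tfrac1R,R]$, but then the displayed inequality would force $\tfrac{2R}{R+|\lambda|}+\tfrac{2R|\lambda|}{R|\lambda|+1}\ge4$, impossible since each summand is $<2$ (recall $|\lambda|\ge\tfrac1R>0$).

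\emph{Parts (iv) and (v).} For (iv), the spectral theorem for the normal operator $N$ turns the operator inequality in Definition \ref{DLAdef}(ii), for each fixed $(z,w)\in\mathbb{D}^{2}$, into the same inequality evaluated pointwise on $\sigma(N)$ (all resolvent functions involved are continuous on $\overline{A_R}$ whenever $z,w\in\mathbb{D}$); hence $N\in\mathbb{DLA}_R(c)$ iff every $\lambda\in\sigma(N)$ satisfies Definition \ref{DLAdef}(ii), i.e.\ (by the scalar core) iff $\sigma(N)\subset\bigcup_{0<s<2+c}\mathcal{C}_{2+c-s,s}(R)$, the latter condition already implying $\sigma(N)\subset\overline{A_R}$. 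Part (v) then falls out: for $c\ge0$, Proposition \ref{51}(i) (with $s=t=1$) shows the scalar part of $\bigcup_{0<s<2+c}\mathcal{C}_{2+c-s,s}(R)$ is exactly $\overline{A_R}$, so by (iv) and the spectral theorem $N\in\mathbb{DLA}_R(c)\iff\sigma(N)\subset\overline{A_R}\iff R^{-2}\le N^{*}N\le R^{2}$. The main work is the scalar core — pinning down the admissible $s$-interval and matching its nonemptiness to the displayed inequality — together with the limiting argument in the operator-to-scalar bridge; once these are in place, the remainder is assembled from Lemmas \ref{CAchar} and \ref{CA, C(2-A)}, the spectral theorem, and Proposition \ref{51}.
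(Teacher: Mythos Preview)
Your proposal is correct and follows essentially the same strategy as the paper: pass from operators to scalars via the approximate point spectrum (your ``bridge''), handle normal operators via the spectral theorem, and invoke Lemma~\ref{CA, C(2-A)} and Proposition~\ref{51} for the easy directions. The only real difference is that you compute the scalar infima $\inf_{z}2\Re(1-z\lambda/R)^{-1}=\tfrac{2R}{R+|\lambda|}$ explicitly and pin down the admissible $s$-interval, whereas the paper extracts $s$ by an abstract $\inf\ge\sup$ separation argument without evaluating these quantities; your route is a bit more concrete but amounts to the same thing (and incidentally your $\mathcal{C}_{2+c,2+c}(R)$ in part~(iii) is the right index---the $2-c$ in the paper's statement is a typo).
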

\begin{proof}
First, assume $c>-2.$ Lemma \ref{CA, C(2-A)} implies that $\mathcal{C}_{2+c-s, s}(R)\subset \mathbb{DLA}_R(c)$ for every $s\in (0, 2+c),$ hence one direction is obvious. For the converse, assume $\mathbb{DLA}_R(c)$ is non-empty. Thus, there exists $T\in\mathcal{B}(H)$ such that 
\begin{equation} \label{3111}
 \Big\langle \Big(2\Re\big[(1-zT/R)^{-1}+(1-wT^{-1}/R)^{-1}\big]-2+c   \Big)h, h\Big\rangle \ge 0,   
\end{equation}
for all $z, w\in\mathbb{D}$ and $x\in H.$ Now, let $\lambda\in\mathbb{C}^*$ be in the approximate point spectrum of $T$ (this is always non-empty, as it contains the topological boundary of $\sigma(T)$; see \cite[Problem 63]{Halmosproblembook}). Thus, there exists a sequence $\{h_n\}\subset H$ such that $||h_n||=1$ ($n=1, 2,\dots$) and $(\lambda-T)h_n\to 0$ as $n\to\infty$. From this last limit we easily obtain
$$
\lim_n\big(\lambda^k-T^k\big)h_n=0,$$
for all $k\in\mathbb{Z}.$  Thus, we can write $\langle T^kh_n, h_n\rangle \to \lambda^k,$ for all $k\in\mathbb{Z},$ and so 
\begin{equation}\label{3113}
    \bigg\langle \sum_{k=0}^m  (zT/R)^k h_n, h_n\bigg\rangle\to\sum_{k=0}^m(z\lambda/R) ^k
\end{equation}
and \begin{equation} \label{3114}
 \bigg\langle \sum_{k=0}^m  (wT^{-1}/R)^{k} h_n, h_n\bigg\rangle\to\sum_{k=0}^m(w/(R\lambda))^{k},   
\end{equation}
as $n\to\infty$, for all $m\ge 0$ and $z, w\in\mathbb{D}.$ \\ Now, fix $z, w\in\mathbb{D}$. We have convergence $\sum_{k=0}^m (zT/R)^k\to (1-zT/R)^{-1}$ and $\sum_{k=0}^m (wT^{-1}/R)^k\to (1-wT^{-1}/R)^{-1}$ as $m\to\infty$ in the operator norm, since $\sigma(zT/R), \sigma(wT^{-1}/R)\subset\mathbb{D}$. Hence, in view of (\ref{3113})-(\ref{3114}), we can conclude that 
$$
\lim_{n\to\infty} \big\langle (1-zT/R)^{-1}h_n, h_n\big\rangle=\lim_{m, n\to\infty} \bigg\langle \sum_{k=0}^m  (zT/R)^k h_n, h_n\bigg\rangle$$
\begin{equation} \label{3115}
=(1-z\lambda/R)^{-1}
\end{equation}
and $$
\lim_{n\to\infty} \big\langle (1-wT^{-1}/R)^{-1}h_n, h_n\big\rangle=\lim_{m, n\to\infty} \bigg\langle \sum_{k=0}^m  (wT^{-1}/R)^k h_n, h_n\bigg\rangle$$
\begin{equation} \label{3116} =(1-w/(R\lambda))^{-1}.
\end{equation}
We now set $h=h_n$ in (\ref{3111}) and let $n\to\infty.$ In view of the real-part versions of (\ref{3115})-(\ref{3116}), we obtain 
$$2\Re\big[(1-z\lambda/R)^{-1}+(1-w/(\lambda R))^{-1}\big]-2+c\ge 0,$$
for all $z, w\in\mathbb{D}.$
This last inequality can also be written as 
$$\inf_{z\in\mathbb{D}}\Big[2\Re(1-z\lambda/R)^{-1}+c\Big]\ge \sup_{w\in\mathbb{D}}\Big[-2\Re(1-w/(\lambda R))^{-1}+2\Big]$$
and thus we can deduce the existence of $s\in\mathbb{R}$ such that 
$$2\Re(1-z\lambda/R)^{-1}+(1-w/(\lambda R))^{-1}+c\ge s\ge  -2\Re(1-w/(\lambda R))^{-1}+2,$$
for all $z, w\in\mathbb{D}.$  In view of Lemma \ref{CAchar}, we obtain $\lambda/R\in\mathcal{C}_{2+c-s}$ and $1/(\lambda R)\in\mathcal{C}_{s}$, which concludes the proof.
 If $c\ge 0,$ then we can always choose  $s=1$, as $\mathcal{C}_{2+c-1, 1}(R)$ will be non-empty by Lemma \ref{51}(i). \par  For (ii), assume $c\le -2$ and let $T\in \mathbb{DLA}_R(c).$ Set $w=0$ in the definition of $\mathbb{DLA}_R(c)$ to obtain $T/R\in\mathcal{C}_{c+2}$, which implies that $c=-2$ (since $\mathcal{C}_{\rho}$ is empty for $\rho<0$). But then, we obtain $T/R\in\mathcal{C}_0$ and so $T\equiv 0$, which contradicts the inclusion $\sigma(T)\subset\overline{A_R}.$ Thus, $\mathbb{DLA}_R(c)$ must be empty. \par 
 For (iii), assume $c>-2$ and let $T\in \mathbb{DLA}_R(c).$ Setting  $w=0$ in the definition of $\mathbb{DLA}_R(c)$ gives us $T/R\in\mathcal{C}_{2-c}$, while setting $z=0$ (and letting $w\in\mathbb{D}$) gives us $T^{-1}/R\in\mathcal{C}_{2-c}$. Thus, $\mathbb{DLA}_R(c)\subset \mathcal{C}_{2-c, 2-c}(R).$ \par 
 For (iv), let $c>-2$. Since $N$ is normal, it has a spectral decomposition (see \cite[Chapter IX]{Conway})
  $$N=\int_{\sigma(N)}\lambda \ dE(\lambda).$$
  Using this, we obtain that $N\in\mathbb{DLA}_R(c)$ if and only if its spectrum is contained in $\overline{A_R}$ and 
  $$2\Re\big[(1-zN/R)^{-1}+(1-wN^{-1}/R)^{-1}\big]-2+c $$
  $$=\int_{\sigma(N)}\Big(2\Re\big[(1-z\lambda /R)^{-1}+(1-w/(\lambda R))^{-1}\big]-2+c\Big)\ dE(\lambda)\ge 0,$$
  for all $z, w\in\mathbb{D},$ which can be equivalently restated as 
  $$2\Re\big[(1-z \lambda /R)^{-1}+(1-w/(\lambda R))^{-1}\big]-2+c\ge 0,$$
  for all $z, w\in\mathbb{D}$ and all $\lambda\in\sigma(N).$  Mimicking our argument from the proof of (i), we deduce, for every $\lambda\in\sigma(N)$, the existence of $s\in (0, 2+c)$ (depending on $\lambda$) such that $\lambda\in\mathcal{C}_{2+c-s, s}(R)$. Hence, we can write 
  $$\sigma(N)\subset \bigcup_{0<s<2+c}\mathcal{C}_{2+c-s, s}(R),$$
  which concludes the proof. \par 
  For (v), assume $c\ge 0$.  Since $N$ is normal, the condition $R^{-2}\le N^*N\le R^2$ is equivalent to $\sigma(N)\subset A_R$, which is necessary for membership in $\mathbb{DLA}_R(c).$ Conversely, if $R^{-2}\le N^*N\le R^2$, then 
  $$\sigma(N)\subset \mathcal{C}_{1, 1}(R)\subset \bigcup_{0<s<2+c}\mathcal{C}_{2+c-s, s}(R),$$
  where the last inclusion holds because $c\ge 0$. Thus, we must have $N\in\mathbb{DLA}_R(c)$. 
\end{proof}

Under the extra assumption $\sigma(T)\subset A_R,$ we can restrict the parameters $z, w$ in the definition of $\mathbb{DLA}_R(c)$ to the boundary of the disk (this is the $\mathbb{DLA}_R(c)$-version of Lemma \ref{fromztotheta}).
\begin{lemma} \label{DLAfromztotheta}
    Assume $c>-2$ and $T\in\mathcal{B}(H)$ is such that $\sigma(T)\subset A_R.$ Then, $T\in \mathbb{DLA}_R(c)$ if and only if 
 $$2\Re\big[(1-e^{i\theta}T/R)^{-1}+(1-e^{i\psi}T^{-1}/R)^{-1}\big]-2+c\ge 0, \hspace{0.4 cm} \forall \theta, \psi\in [0, 2\pi).$$
\end{lemma}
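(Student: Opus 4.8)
The plan is to imitate the proof of Lemma \ref{fromztotheta} (the $\mathcal{C}_A$ analogue), adapting the harmonic-function argument to the two-variable setting. Fix an arbitrary $h \in H$ and define the function
\[
\Psi_h(z, w) = \big\langle \big(2\Re[(1-zT/R)^{-1} + (1-wT^{-1}/R)^{-1}] - 2 + c\big)h, h\big\rangle
\]
on $\mathbb{D} \times \mathbb{D}$. Because $\sigma(T) \subset A_R$, we have $\sigma(T/R) \subset \mathbb{D}$ and $\sigma(T^{-1}/R) \subset \mathbb{D}$ strictly, so each resolvent $(1-zT/R)^{-1}$ is operator-norm analytic for $z$ in a neighborhood of $\overline{\mathbb{D}}$, and similarly for the second term. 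Hence $\Psi_h$ splits as a sum $\Psi_h(z,w) = \varphi_h(z) + \psi_h(w)$, where $\varphi_h(z) = \langle (2\Re(1-zT/R)^{-1} - 2 + c)h, h\rangle$ is harmonic on a neighborhood of $\overline{\mathbb{D}}$ and $\psi_h(w) = \langle 2\Re(1-wT^{-1}/R)^{-1}h, h\rangle$ is likewise harmonic on a neighborhood of $\overline{\mathbb{D}}$; I have just split the constant $-2+c$ into the $z$-term for definiteness.

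The membership condition $T \in \mathbb{DLA}_R(c)$ says exactly $\varphi_h(z) + \psi_h(w) \ge 0$ for all $z, w \in \mathbb{D}$ and all $h$, while the boundary condition to be proved says $\varphi_h(e^{i\theta}) + \psi_h(e^{i\psi}) \ge 0$ for all $\theta, \psi$ and all $h$. One direction is immediate by continuity: since each resolvent extends continuously to $\overline{\mathbb{D}}$, letting $z \to e^{i\theta}$ and $w \to e^{i\psi}$ in the interior inequality gives the boundary inequality. For the converse, I would decouple the two variables: the interior inequality $\varphi_h(z) + \psi_h(w) \ge 0$ for all $z, w$ is equivalent to $\inf_{z \in \mathbb{D}} \varphi_h(z) + \inf_{w \in \mathbb{D}} \psi_h(w) \ge 0$. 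Now $\varphi_h$ and $\psi_h$ are each harmonic on $\mathbb{D}$ and continuous on $\overline{\mathbb{D}}$, so by the minimum principle for harmonic functions, $\inf_{z \in \mathbb{D}} \varphi_h(z) = \inf_{\theta} \varphi_h(e^{i\theta})$ and $\inf_{w \in \mathbb{D}} \psi_h(w) = \inf_{\psi} \psi_h(e^{i\psi})$. Therefore the interior condition is equivalent to $\inf_\theta \varphi_h(e^{i\theta}) + \inf_\psi \psi_h(e^{i\psi}) \ge 0$, which is in turn equivalent to $\varphi_h(e^{i\theta}) + \psi_h(e^{i\psi}) \ge 0$ for all $\theta, \psi$ — precisely the boundary condition. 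Running this for every $h \in H$ finishes the proof.

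I do not expect a serious obstacle here; the argument is essentially a two-variable repackaging of Lemma \ref{fromztotheta}. The one point requiring a little care is the reduction of the joint inequality in $(z,w)$ to the sum of the two separate infima: this works precisely because the expression separates additively as $\varphi_h(z) + \psi_h(w)$, which hinges on the fact that $(1-zT/R)^{-1}$ depends only on $z$ and $(1-wT^{-1}/R)^{-1}$ depends only on $w$. The separability, together with the fact that $\sigma(T) \subset A_R$ (strict inclusion) guarantees both resolvents are genuinely defined and continuous on all of $\overline{\mathbb{D}}$ in their respective variables, is what makes the minimum principle applicable in each variable independently. I would state these two observations explicitly and then quote the minimum principle exactly as in the proof of Lemma \ref{fromztotheta}.
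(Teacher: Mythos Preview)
Your proposal is correct and follows essentially the same approach as the paper: both arguments reduce to the minimum principle for harmonic functions applied in each variable separately, exploiting the additive splitting of the expression in $z$ and $w$. The only cosmetic difference is that the paper iterates the one-variable argument (first fix $w$ and apply the minimum principle in $z$, then apply it in $w$), whereas you phrase it as a simultaneous reduction via $\inf_z \varphi_h + \inf_w \psi_h$; the content is identical.
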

\begin{proof}
First, assume $T\in \mathbb{DLA}_R(c)$. Fix an arbitrary $h\in H$ and $w\in\mathbb{D}$ and define 
$$\Phi_{h, w}: \mathbb{D}\to \mathbb{C} $$
$$z\mapsto \big\langle \big( 2\Re\big[(1-zT/R)^{-1}+(1-wT^{-1}/R)^{-1}\big]-2+c\big)h, h\big\rangle.$$
Since $\sigma(T)\subset A_R$, $\Phi_{h, w}$ will be a harmonic function on $\mathbb{D}$ that extends continuously to $\overline{\mathbb{D}}$. By the minimum principle for harmonic functions, we then obtain that 
 $$\Phi_{h, w}(z)=\big\langle \big( 2\Re\big[(1-zT/R)^{-1}+(1-wT^{-1}/R)^{-1}\big]-2+c\big)h, h\big\rangle\ge 0,  $$ for all $z\in\mathbb{D}$, 
 if and only if 
 $$\Phi_{h, w} (e^{i\theta})=\big\langle \big( 2\Re\big[(1-e^{i\theta}T/R)^{-1}+(1-wT^{-1}/R)^{-1}\big]-2+c\big)h, h\big\rangle\ge 0, $$ for all $\theta\in [0, 2\pi).$
 Since $h$ and $w$ were arbitrary, we conclude that
$$2\Re\big[(1-e^{i\theta}T/R)^{-1}+(1-wT^{-1}/R)^{-1}\big]-2+c\ge 0, $$
for all $\theta\in [0, 2\pi)$ and all $w\in\mathbb{D}.$ We can now apply the minimum principle to the function 
$$w\mapsto \big\langle \big( 2\Re\big[(1-e^{i\theta}T/R)^{-1}+(1-wT^{-1}/R)^{-1}\big]-2+c\big)h, h\big\rangle$$
to conclude the proof. \par 
For the converse, simply roll back the steps in the previous proof. 
\end{proof}
Now, recall that the $\mathcal{C}_{\rho}$ classes are strictly monotone with respect to $\rho,$ i.e. $\rho<\rho'$ implies $\mathcal{C}_{\rho}\subsetneq \mathcal{C}_{\rho'}$. We are going to prove an analogous monotonicity result for $\mathbb{DLA}_R(c).$
\begin{theorem}\label{monot}
If $-2<c<c'$ and $\mathbb{DLA}_R(c')$ is non-empty, then $$\mathbb{DLA}_R(c)\subsetneq \mathbb{DLA}_R(c').$$
\end{theorem}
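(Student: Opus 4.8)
The plan is to establish the two assertions separately: the inclusion $\mathbb{DLA}_R(c)\subseteq \mathbb{DLA}_R(c')$, and then its strictness. The inclusion is immediate from the definition: if $T\in\mathbb{DLA}_R(c)$, then $\sigma(T)\subset\overline{A_R}$ and the positivity condition (ii) holds with constant $c$; since $c'>c$, adding the nonnegative scalar $(c'-c)I$ to the left-hand side preserves positivity, so (ii) holds with $c'$ as well, giving $T\in\mathbb{DLA}_R(c')$. (One should note that $\mathbb{DLA}_R(c)$ being possibly empty is not an issue here — the empty set is contained in everything — but the strictness claim is exactly what forces the hypothesis that $\mathbb{DLA}_R(c')$ be non-empty.)

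For strictness, the natural strategy is to exploit the ``rigidity'' theorem (Theorem \ref{110}) together with the structure results of Proposition \ref{DLAstuff}. Since $\mathbb{DLA}_R(c')$ is non-empty, Proposition \ref{DLAstuff}(i) gives some $s'\in(0,2+c')$ with $\mathcal{C}_{2+c'-s', s'}(R)$ non-empty. I would like to produce an operator $T$ that lies in $\mathbb{DLA}_R(c')$ but not in $\mathbb{DLA}_R(c)$. The idea: by Proposition \ref{DLAstuff}(iv), a normal (say diagonal $2\times 2$, or even scalar) operator $N$ belongs to $\mathbb{DLA}_R(c')$ iff $\sigma(N)\subset\bigcup_{0<s<2+c'}\mathcal{C}_{2+c'-s,s}(R)$, and similarly for $c$. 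So it suffices to find a point $\lambda\in\mathbb{C}^*$ that lies in some $\mathcal{C}_{2+c'-s,s}(R)$ but in no $\mathcal{C}_{2+c-s,s}(R)$. Concretely, one chooses a boundary case: pick $\lambda$ with $|\lambda|$ at the extreme allowed by the constraints for $c'$ — for instance, using the membership criteria for normal operators in Proposition \ref{51} (or the $2\times2$ criteria in Lemma \ref{90} with $b\neq 0$), select $\lambda$ so that it is ``extremal'' for the $c'$-condition; the strict monotonicity of the constraint intervals in $c$ then forces $\lambda\notin\mathbb{DLA}_R(c)$. Alternatively — and perhaps more robustly — one can take a $2\times 2$ Jordan-type matrix $T_0=\begin{pmatrix} a_0 & b_0\\ 0 & a_0\end{pmatrix}$ supplied by Theorem \ref{110} applied to a pair $(s,t)$ with $s+t=2+c'$: such $T_0$ sits in $\mathcal{C}_{s,t}(R)\subseteq\mathbb{DLA}_R(c')$ but, by the ``boundary-saturation'' property, fails to lie in $\mathcal{C}_{s',t-\epsilon}(R)$ or $\mathcal{C}_{s-\epsilon,t'}(R)$ for any $\epsilon>0$; one then checks this failure propagates to $\mathbb{DLA}_R(c)$ via the characterization $\mathbb{DLA}_R(c)\cap\{2\times2\text{ matrices}\}$ being governed by a union of $\mathcal{C}_{2+c-s,s}(R)$ classes with $s+(2+c-s)=2+c<2+c'$.

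The main obstacle I anticipate is the last step: showing that the extremal $T_0\in\mathbb{DLA}_R(c')$ genuinely escapes $\mathbb{DLA}_R(c)$. The subtlety is that $\mathbb{DLA}_R(c)$-membership for a non-normal $2\times 2$ matrix is \emph{not} simply a union of $\mathcal{C}_{s,t}(R)$-memberships (that union characterization in Proposition \ref{DLAstuff}(iv) is stated for \emph{normal} $N$). One needs to go back to the defining positivity inequality in Definition \ref{DLAdef}(ii), or better, use Lemma \ref{DLAfromztotheta} to reduce to the boundary $|z|=|w|=1$, and then verify directly that the combined kernel for $T_0$ with constant $c$ fails positivity. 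This amounts to: writing the $2\times2$ positivity condition explicitly (as in the computations behind Lemma \ref{90}), observing that the relevant quadratic form, which is exactly nonnegative-but-degenerate for the parameters summing to $2+c'$, acquires a strictly negative eigenvalue when $2+c'$ is decreased to $2+c$. I would therefore organize the argument so that the algebraic core is exactly the computation already performed in the proof of Theorem \ref{110} — the identity exhibiting $\epsilon(R-|a_0|)^2>0$ when the constant is perturbed — reusing that structure with the roles of $s,t,\epsilon$ matched to $c,c',c'-c$. If that direct route proves cumbersome, a cleaner alternative is simply to take $T_0$ to be a \emph{scalar} $\lambda$ at a carefully chosen modulus (so normality holds and Proposition \ref{DLAstuff}(iv) applies cleanly), at the cost of needing the extremal-point version of Theorem \ref{110} for the scalar case — which is exactly the $b=0$ specialization and is considerably easier.
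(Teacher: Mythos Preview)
Your outline mirrors the paper's strategy closely: the inclusion is immediate, and for strictness you propose both a $2\times 2$ Jordan block from Theorem~\ref{110} and a scalar. However, these are not interchangeable alternatives but a genuine case split in the paper. When $c\ge 0$, Proposition~\ref{DLAstuff}(v) shows that normal membership in $\mathbb{DLA}_R(c)$ is equivalent to $\sigma(N)\subset\overline{A_R}$, independently of $c$; hence no scalar (or any normal operator) can separate $\mathbb{DLA}_R(c)$ from $\mathbb{DLA}_R(c')$ when $c,c'\ge 0$. Your ``cleaner alternative'' therefore only works when $c'<0$ (and hence $c<c'<0$), which is exactly the paper's second case. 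In the remaining regime one is forced to use the non-normal $2\times 2$ block.

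For that $2\times 2$ argument there is a gap in your plan. The computation you propose to recycle from Theorem~\ref{110}---the $\epsilon(R-|a_0|)^2>0$ identity---establishes $T_0\notin\mathcal{C}_{s-\epsilon,t'}(R)$, but as you yourself note, for non-normal $T$ the class $\mathbb{DLA}_R(c)$ is \emph{not} a union of $\mathcal{C}_{s,t}(R)$ classes, so this does not directly yield $T_0\notin\mathbb{DLA}_R(c)$. The paper instead takes the determinant of the $2\times 2$ operator in Definition~\ref{DLAdef}(ii), evaluated at the specific boundary point $(\theta_0,\psi_0)=(0,\pi)$ with $a,b>0$. At this point the two off-diagonal terms are real with opposite signs, so the modulus of their difference equals the sum of their moduli; combined with the saturation of (\ref{300}) at $\theta=0$ and of (\ref{302}) at $\psi=\pi$, the right-hand side becomes exactly the diagonal expression plus $(c'-2)$, contradicting the $\mathbb{DLA}_R(c)$ bound since $c<c'$. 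That particular choice of $(\theta_0,\psi_0)$---forcing equality in the triangle inequality---is the crux and should be made explicit.
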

\begin{proof} If $c<c'$, it is obvious by the definition of $\mathbb{DLA}_R(c)$ that $\mathbb{DLA}_R(c)\subseteq\mathbb{DLA}_R(c').$ To show that the inclusion is actually strict, we are going to divide the proof into two cases. \par 
First, assume that there exist $s, t>0$ such that $s\ge 1, s+t=2+c'$ and $\mathcal{C}_{s. t}(R)$ is non-empty. Note that (in view of Lemma \ref{51} and the monotonicity of the $\mathcal{C}_{\rho}$ classes), if such $s, t$ exist and $t\ge 1,$ we can replace them by new parameters $s', t'$ such that $s'\ge 1$, $t'<1$, $s'+t'=2+c'$, $ \frac{2}{R^2+1}<t$ and $\mathcal{C}_{s'. t'}(R)$ is non-empty. So, we may also assume that $t<1$ and $\frac{2}{R^2+1}<t$. \par 
    Now, we will take advantage of the matrices we calculated in the proof of Theorem \ref{110}. Recall first that, in view of Lemma \ref{90}, a matrix $$T=\begin{pmatrix} a & b \\ 0 & a \end{pmatrix}
     $$ with $a, b>0$ lies in $\mathcal{C}_{s, t}(R)$ if and only if $(2-t)/(Rt)\le a\le R$ and 
     \begin{equation}\label{300}
      2\Re \bigg[\frac{1}{1-ae^{i\theta}/R} \bigg] +(s-2)\ge \frac{b}{R|1-ae^{i\theta}/R|^2}  
     \end{equation}
 holds for $\theta=0$ and 
  \begin{equation}\label{302}
      2\Re \bigg[\frac{1}{1-e^{i\psi}/(aR)} \bigg] +(t-2)\ge \frac{b}{a^2}\frac{1}{R|1-e^{i\psi}/(aR)|^2}  
     \end{equation}
     holds for $\psi=\pi$. Since $\frac{2}{R^2+1}<t$, the proof of Theorem \ref{110} tells us that we can find $a, b$ such that $(2-t)/(Rt)<a<R$ and we have equality in (\ref{300}) for $\theta=0$ and in (\ref{302}) for $\psi=\pi.$ Since, for these values of $a, b$, we have $T\in\mathcal{C}_{s, t}(R)$ and $s+t=2+c',$ Lemma \ref{CA, C(2-A)} tells us that $T\in\mathbb{DLA}_R(c')$. \par Now, we claim that $T$ cannot lie in $\mathbb{DLA}_R(c)$. Indeed, assume $T\in \mathbb{DLA}_R(c)$. Taking determinants in the inequality
   $$2\Re\big[(1-zT/R)^{-1}+(1-wT^{-1}/R)^{-1}\big]-2+c\ge 0 $$
     implies that 
$$2\Re\bigg[\frac{1}{1-ae^{i\theta}/R}+\frac{1}{1-e^{i\psi}/(aR)} \bigg]-2+c$$ $$\ge \bigg|\frac{be^{i\theta}}{R(1-ae^{i\theta}/R)^2}-\frac{b}{a^2}\frac{e^{i\psi}}{R(1-e^{i\psi}/(aR))^2} \bigg| $$
     for all $\theta$ and $\psi.$ In particular, we can choose $\theta_0=0$ and $\psi_0=\pi,$ which implies (since we have equality in both (\ref{300}) and (\ref{302}))
$$2\Re\bigg[\frac{1}{1-ae^{i\theta_0}/R}+\frac{1}{1-e^{i\psi_0}/(aR)} \bigg]-2+c$$ $$\ge \bigg|\frac{be^{i\theta_0}}{R(1-ae^{i\theta_0}/R)^2}-\frac{b}{a^2}\frac{e^{i\psi_0}}{R(1-e^{i\psi_0}/(aR))^2} \bigg|$$
$$=\bigg|\frac{be^{i\theta_0}}{R(1-ae^{i\theta_0}/R)^2}\bigg|+\bigg|\frac{b}{a^2}\frac{e^{i\psi_0}}{R(1-e^{i\psi_0}/(aR))^2} \bigg| $$
$$=2\Re \bigg[\frac{1}{1-ae^{i\theta_0}/R} \bigg] +(s-2)+ 2\Re \bigg[\frac{1}{1-e^{i\psi_0}/(aR)} \bigg] +(t-2)$$
$$=2\Re \bigg[\frac{1}{1-ae^{i\theta_0}/R} \bigg]+ 2\Re \bigg[\frac{1}{1-e^{i\psi_0}/(aR)} \bigg]+c'-2, $$
a contradiction, since $c<c'$. Thus,  $T\notin\mathbb{DLA}_R(c')$. \par  
  Now, assume that there do not exist $s, t>0$ such that $s\ge 1, s+t=2+c'$ and $\mathcal{C}_{s. t}(R)$ is non-empty.  Since $T\in\mathcal{C}_{s, t}(R)$ if and only if $T^{-1}\in\mathcal{C}_{t, s}(R)$, we also deduce that there do not exist $s, t>0$ such that $t\ge 1, s+t=2+c'$ and $\mathcal{C}_{s. t}(R)$ is non-empty. Thus, $-2<c'<0$. Now, $\mathbb{DLA}_R(c')$ is non-empty, so Proposition \ref{DLAstuff}(i) tells us that there exists $s\in (0, 2+c')$ such that $\mathcal{C}_{s, 2+c'-s}(R)$ is non-empty. In view of our previous remarks, we must have $s, 2+c'-s<1$. Lemma \ref{51} then tells us that 
  $$\frac{1}{R}\frac{s-c'}{2+c'-s}\le R\frac{s}{2-s}.$$
  Note that the right-hand side tends to $0$ as $s\to 0$, while the left-hand side remains bounded below  by a strictly positive number. Thus, shrinking $s$, we may replace it by a positive number $\delta$ such that $\frac{1}{R}\frac{\delta-c'}{2+c'-\delta}= R\frac{\delta}{2-\delta}.$ In view again of Lemma \ref{51}, we obtain that $\mathcal{C}_{\delta, 2+c'-\delta}(R)$ is non-empty. Setting $a=R\frac{\delta}{2-\delta},$ we obtain, from the same Lemma, that $a\in \mathcal{C}_{\delta, 2+c'-\delta}(R)\subset\mathbb{DLA}_R(c')$.  
\par Now, we shall show that $a\notin \mathbb{DLA}_R(c)$. Assume instead that $a\in \mathbb{DLA}_R(c)$.  Proposition \ref{DLAstuff} then implies that we can find $s_0\in (0, 2+c)$ such that $a\in\mathcal{C}_{s_0, 2+c-s_0}(R)$. But $c<c',$ hence we must have either $s_0<\delta$ or $2+c-s_0<2+c'-\delta$. Assume that $s_0<\delta,$ then 
$$R\frac{s_0}{2-s_0}<R\frac{\delta}{2-\delta}=a,$$
which contradicts $a\in\mathcal{C}_{s_0, 2+c-s_0}(R)$. Similarly, if $2+c-s_0<2+c'-\delta$ we can write 
$$\frac{1}{R}\frac{2-(2+c-s_0)}{2+c-s_0}>\frac{1}{R}\frac{2-(2+c'-\delta)}{2+c'-\delta}=\frac{1}{R}\frac{\delta-c'}{2+c'-\delta}=a,$$
which again contradicts $a\in\mathcal{C}_{s_0, 2+c-s_0}(R)$. Thus, $a\notin \mathbb{DLA}_R(c)$ and we are done.
\end{proof}

It is well-known (see e.g. \cite{Williams}) that given any $T\in\mathcal{C}_{\rho}$ ($\rho>0$) and any $f\in\mathcal{A}(\DD)$ such that $||f||_{\infty}\le 1$ and $f(0)=0,$ we must have $f(T)\in\mathcal{C}_{\rho}.$ We end this subsection with a proposition that is motivated by this result.

\begin{proposition} 
Assume $c>-2$ and $T\in \mathbb{DLA}_R(c)$. Then, for any  $f, g\in \mathcal{A}(\DD)$ that are bounded by $1$ and satisfy $f(0)=g(0)=0,$ we have
 $$2\Re\big[(1-zf(T/R))^{-1}+(1-wg(T^{-1}/R))^{-1}\big]-2+c\ge 0, $$
 for all $z, w\in\mathbb{D}.$
\end{proposition}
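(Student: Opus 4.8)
The plan is to reduce the asserted operator inequality, by means of a Herglotz (Schwarz) integral representation, back to the defining positivity condition of $\mathbb{DLA}_R(c)$ in Definition \ref{DLAdef}(ii).

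First I would record the standing reductions. Since $\sigma(T)\subset\overline{A_R}$, the operator $T$ is invertible and $\sigma(T/R),\sigma(T^{-1}/R)\subset\overline{\mathbb{D}}$; moreover, putting $w=0$ (resp.\ $z=0$) in Definition \ref{DLAdef}(ii) and appealing to Lemma \ref{CAchar} shows $T/R\in\mathcal{C}_{2+c}$ and $T^{-1}/R\in\mathcal{C}_{2+c}$, whence, by the Okubo and Ando theorem, $\overline{\mathbb{D}}$ is a $K$-spectral set for both operators with $K=\max\{2+c,1\}$. In particular $T/R$ and $T^{-1}/R$ carry bounded, multiplicative $\mathcal{A}(\mathbb{D})$-functional calculi, and $\sup_n\|(T/R)^n\|$, $\sup_n\|(T^{-1}/R)^n\|\le K$. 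Now fix $z,w\in\mathbb{D}$ and set $\phi:=zf$, $\eta:=wg$; these belong to $\mathcal{A}(\mathbb{D})$, vanish at $0$, and satisfy $\|\phi\|_\infty\le|z|<1$, $\|\eta\|_\infty\le|w|<1$. Hence $\psi:=(1+\phi)/(1-\phi)$ and $\chi:=(1+\eta)/(1-\eta)$ lie in $\mathcal{A}(\mathbb{D})$, take the value $1$ at $0$, and have strictly positive real part on $\overline{\mathbb{D}}$. Since $1-\phi$ is bounded away from $0$ on $\overline{\mathbb{D}}$, multiplicativity of the functional calculus gives $\psi(T/R)=\big(I+\phi(T/R)\big)\big(I-\phi(T/R)\big)^{-1}=2\big(I-zf(T/R)\big)^{-1}-I$, and similarly $\chi(T^{-1}/R)=2\big(I-wg(T^{-1}/R)\big)^{-1}-I$. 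Thus the inequality to be proved is precisely $\Re\psi(T/R)+\Re\chi(T^{-1}/R)\ge -c\,I$.

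Next I would invoke the representation. As $\psi\in\mathcal{A}(\mathbb{D})$ has positive real part and $\psi(0)=1$, the Herglotz/Schwarz formula provides a probability measure $\mu$ on $\partial\mathbb{D}$ with $\psi(\lambda)=\int_{\partial\mathbb{D}}\frac{\zeta+\lambda}{\zeta-\lambda}\,d\mu(\zeta)$ for $\lambda\in\mathbb{D}$, and likewise $\chi(\lambda)=\int_{\partial\mathbb{D}}\frac{\xi+\lambda}{\xi-\lambda}\,d\nu(\xi)$ for a probability measure $\nu$. The obstacle here is that $\sigma(T/R)$ may meet $\partial\mathbb{D}$, so one cannot simply substitute $T/R$ for $\lambda$ inside the kernel $\frac{\zeta+\lambda}{\zeta-\lambda}$. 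To circumvent this I would pass to the dilations $\psi_r:=\psi(r\,\cdot\,)$, $\chi_r:=\chi(r\,\cdot\,)$ for $0<r<1$: these are holomorphic on a neighborhood of $\overline{\mathbb{D}}$, converge uniformly to $\psi,\chi$ on $\overline{\mathbb{D}}$, and have absolutely summable Taylor coefficients. Comparing the norm-convergent partial sums $I+2\sum_{n=1}^{N}r^{n}\widehat{\mu}(n)(T/R)^{n}$ — using $\sup_n\|(T/R)^n\|\le K$ on the operator side and term-by-term integration of the uniformly convergent geometric series $\tfrac{\zeta+rT/R}{\zeta-rT/R}=I+2\sum_{n\ge1}r^{n}\bar\zeta^{\,n}(T/R)^{n}$ on the integral side — yields
$$\psi_r(T/R)=\int_{\partial\mathbb{D}}\frac{\zeta+rT/R}{\zeta-rT/R}\,d\mu(\zeta)=\int_{\partial\mathbb{D}}\Big(2\big(I-r\bar\zeta\,T/R\big)^{-1}-I\Big)\,d\mu(\zeta),$$
where each $r\bar\zeta$ now lies in $\mathbb{D}$, and likewise $\chi_r(T^{-1}/R)=\int_{\partial\mathbb{D}}\big(2(I-r\bar\xi\,T^{-1}/R)^{-1}-I\big)\,d\nu(\xi)$.

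Finally I would assemble the estimate. Because $\mu\times\nu$ is a probability measure on $\partial\mathbb{D}\times\partial\mathbb{D}$,
$$\Re\psi_r(T/R)+\Re\chi_r(T^{-1}/R)=\int\!\!\int\Big(2\Re\big(I-r\bar\zeta T/R\big)^{-1}+2\Re\big(I-r\bar\xi T^{-1}/R\big)^{-1}-2I\Big)\,d\mu(\zeta)\,d\nu(\xi),$$
and, by Definition \ref{DLAdef}(ii) applied with the disk parameters $r\bar\zeta,\,r\bar\xi\in\mathbb{D}$, the integrand is $\ge -c\,I$ for every $(\zeta,\xi)$; hence $\Re\psi_r(T/R)+\Re\chi_r(T^{-1}/R)\ge -c\,I$ for all $r\in(0,1)$. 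Letting $r\to1^{-}$ and using $\psi_r(T/R)\to\psi(T/R)$, $\chi_r(T^{-1}/R)\to\chi(T^{-1}/R)$ in operator norm (uniform continuity of $\psi,\chi$ on $\overline{\mathbb{D}}$ together with the bounded functional calculi) gives $\Re\psi(T/R)+\Re\chi(T^{-1}/R)\ge -c\,I$, which is the claim. The genuinely delicate point is exactly the boundary issue flagged above — that $T/R$ need be neither a contraction nor have spectrum inside the open disk — and the $r\to1$ device resolves it; the remaining steps are routine manipulations with the functional calculus and the Herglotz formula.
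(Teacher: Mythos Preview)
Your proof is correct. Both you and the paper reduce the claim to the defining positivity of $\mathbb{DLA}_R(c)$ via a Herglotz representation, but the organization is genuinely different. The paper fixes $w$ and a vector $x$, applies Herglotz to the scalar function $z\mapsto\langle(\,\cdots\,)x,x\rangle+\epsilon_k$ to obtain a measure $\mu_{x,w,k}$ depending on all three parameters, recovers $\langle p(T/R)x,x\rangle$ as moments of $\mu_{x,w,k}$, replaces monomials by powers of $p$ (and then of $f$), lets $\epsilon_k\to0$, and finally swaps the roles of $z$ and $w$ to handle $g$. You instead apply Herglotz once to the scalar composed functions $\psi=(1+zf)/(1-zf)$ and $\chi=(1+wg)/(1-wg)$ themselves, obtaining fixed probability measures $\mu,\nu$; the radially dilated operators $\psi_r(T/R),\chi_r(T^{-1}/R)$ are then literally $(\mu\times\nu)$-averages of the resolvent expressions appearing in Definition~\ref{DLAdef}(ii), so positivity follows immediately and passes to the limit $r\to1$. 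Your route treats $f$ and $g$ simultaneously, dispenses with the $\epsilon_k$ regularization and the two-stage swap, and is more transparently an ``averaging'' argument; the paper's route keeps everything at the level of quadratic forms and is slightly more self-contained in that it does not explicitly name the Okubo--Ando bound (though some bounded $\mathcal{A}(\mathbb{D})$-calculus is implicit in its approximation step as well).
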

\begin{proof}
By assumption, we know that 
 \begin{equation} \label{80} \Re\bigg[\sum_{n=1}^{\infty}\bigg(\frac{Tz}{R}\bigg)^n+1+\sum_{n=1}^{\infty}\bigg(\frac{T^{-1}w}{R}\bigg)^{n}\bigg]\ge \frac{-c}{2} ,\end{equation}
 for all $z, w\in\mathbb{D}.$ Fix $w\in\mathbb{D}$, let $x\in H$ and choose a decreasing null sequence $\{\epsilon_k\}.$ Set 
 $$S_k=S_k(w)=1+c/2+\epsilon_k+\Re\sum_{n=1}^{\infty}\bigg(\frac{T^{-1}w}{R}\bigg)^{n}\in\mathcal{B}(H).$$
 (\ref{80}) now tells us that the holomorphic function 
 $$F_k:\mathbb{D}\to\mathbb{C} $$
 $$z\mapsto \sum_{n=1}^{\infty}\frac{z^n}{R^n}\langle T^nx,x\rangle +\langle S_k x, x \rangle $$
 has positive real part and satisfies $F_k(0)=\langle S_k x, x \rangle>0$ (put $z=0$ in (\ref{80})). Thus, Herglotz's theorem implies the existence of a positive measure $\mu_{x, w, k}$ on the unit circle such that 
 $$F_k(0)+\sum_{n=1}^{\infty}\frac{z^n}{R^n}\langle T^nx,x\rangle=\int \frac{1+ze^{-i\theta}}{1-ze^{-i\theta}}d\mu_{x, w, k}(\theta),  $$ for all $z\in\mathbb{D}.$ Expanding the integrand and equating coefficients, we obtain 
 $$\frac{1}{R^n} \langle T^nx,x\rangle=2\int e^{-in\theta}d\mu_{x, w, k}(\theta),$$
 for every $n\ge 1.$ Thus, if $p$ is any polynomial such that $p(0)=0$, we can write 
 $$\langle p(T/R)x, x\rangle=2\int p(e^{-i\theta})d\mu_{x, w, k}(\theta).$$
Replacing $p$ by $p^n,$ we obtain
$$\langle p^n(T/R)x, x\rangle=2\int p^n(e^{-i\theta})d\mu_{x, w, k}(\theta),$$
for every $n\ge 1.$
Thus, if we also assume that $|p|\le 1,$ we obtain that $p(T/R)$ has its spectrum inside $\mathbb{D}$ (since the same must be true for $T/R$) and we can write 
$$\bigg\langle\bigg(S_k+ \sum_{n=1}^{\infty}z^n p^n(T/R) \bigg)x,x\bigg\rangle$$ $$=F_k(0)+2\sum_{n=1}^{\infty} z^n\int p(e^{-i\theta})^nd\mu_{x, w, k}(\theta)=\int \frac{1+zp(e^{-i\theta})}{1-zp(e^{-i\theta})}d\mu_{x, w, k}(\theta).$$
Now, if  $f\in\mathcal{A}(\DD)$ is bounded by $1$ and satisfies $f(0)=0,$ a standard approximation argument shows that 
$$\bigg\langle\bigg(S_k+ \sum_{n=1}^{\infty}z^n f^n(T/R) \bigg)x,x\bigg\rangle=\int \frac{1+zf(e^{i\theta})}{1-zf(e^{i\theta})}d\mu_{x, w, k}(\theta).$$
The integrand has positive real part for all $z$ and $\theta$, hence
$$\Re\bigg[\sum_{n=1}^{\infty}\big(zf(T/R)\big)^n+1+\epsilon_k+\sum_{n=1}^{\infty}\big(wT^{-1}/R\big)^{n}\bigg]\ge \frac{-c}{2},$$
for all $k$ and for all $z, w\in\mathbb{D}.$ Letting $k\to\infty$, we obtain 
$$\Re\bigg[\sum_{n=1}^{\infty}\big(zf(T/R)\big)^n+1+\sum_{n=1}^{\infty}\big(wT^{-1}/R\big)^{n}\bigg]\ge \frac{-c}{2},$$
for all $z, w\in\DD.$
Using this last inequality, we may repeat the previous argument with the roles of $z$ and $w$ swapped, thus obtaining the desired result.
\end{proof}

 \par 
\subsection{The double-layer potential kernel} In this subsection, we exhibit the connection between $\mathbb{DLA}_R(c)$ and the double-layer potential kernel over the annulus, as described in the introduction. \par 
Recall that, given any bounded open $\Omega\subset\mathbb{C}$ and any $T\in\mathcal{B}(H)$ such that $\sigma(T)\subset \Omega,$ we may consider the Cauchy transforms of $f$ and $\overline{f}$
$$f(T)=(Cf)(T)=\frac{1}{2\pi i}\int_{\partial \Omega}f(\sigma)(\sigma -T)^{-1}d\sigma, $$ $$(C\overline{f})(T)=\frac{1}{2\pi i}\int_{\partial \Omega}\overline{f(\sigma)}(\sigma -T)^{-1}d\sigma.$$
We also define the transform of $f$ by the double-layer potential kernel
$$S(f, T)=\int_{\partial\Omega}\mu(\sigma(s), T)f(\sigma(s))ds,$$
where $s$ denotes the arc length of $\sigma=\sigma(s)$ on the (counter-clockwise) oriented boundary $\partial\Omega$ and $\mu(\sigma(s), T)$ is the self-adjoint operator defined (for $\sigma(s)\notin\sigma(T))$ as 
$$\mu(\sigma(s), T)=\frac{1}{2\pi i}\big(\sigma'(s)(\sigma(s)-T)^{-1}-\overline{\sigma'(s)}(\overline{\sigma(s)}-T^*)^{-1}\big).$$
Note that $S(f, T)=f(T)+(C\overline{f})(T)^*$ and thus $\int_{\partial\Omega}\mu(\sigma, T)ds=2I.$ \par 
The definition of $\mathbb{DLA}_R(c)$ can now be recast (for $\sigma(T)\subset A_R$) as follows. 
\begin{theorem} \label{doublelayer}
 Assume $ R>1, c>-2$ and $T\in\mathcal{B}(H)$ satisfies $\sigma(T)\subset A_R.$ Then, $T\in \mathbb{DLA}_R(c)$ if and only if the mapping 
 $$S_{R, c}: \mathcal{A}(A_R)\to\mathcal{B}(H) $$
 $$f=\sum_{n\in\mathbb{Z}}a_n z^n \mapsto \frac{1}{2+c}\bigg[ \int_{\partial A_R}\mu(\sigma, T)f(\sigma)ds +ca_0\bigg] $$
 is contractive.
\end{theorem}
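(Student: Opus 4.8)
The plan is to compute the double‑layer kernel $\mu(\sigma,T)$ explicitly on each of the two boundary circles of $A_R$ and then read off the equivalence from the resulting formula together with Lemma~\ref{DLAfromztotheta}. On the outer circle $\{|z|=R\}$, written $z=Re^{i\theta}$, a direct computation from the definition of $\mu$ gives $\mu(Re^{i\theta},T)\,ds=\frac1{2\pi}\big(2\Re(1-e^{-i\theta}T/R)^{-1}\big)\,d\theta$, while on the (clockwise) inner circle $\{|z|=1/R\}$, written $z=R^{-1}e^{i\psi}$, one gets $\mu(R^{-1}e^{i\psi},T)\,ds=\frac1{2\pi}\big(2\Re(1-e^{i\psi}T^{-1}/R)^{-1}-2I\big)\,d\psi$; the two contributions add to $\int_{\partial A_R}\mu\,ds=2I$, as required. (Alternatively one expands $f(T)$ and $(C\overline f)(T)^*$ in Laurent series, using $\sigma(T)\subset A_R$, and checks the same formula coefficient by coefficient.) Since $\frac1{2\pi}\int_0^{2\pi}f(Re^{i\theta})\,d\theta=\frac1{2\pi}\int_0^{2\pi}f(R^{-1}e^{i\psi})\,d\psi=a_0$, distributing the correction $c\,a_0$ over the two circles yields, for $f\in\mathcal A(A_R)$,
\[(2+c)\,S_{R,c}(f)=\int_0^{2\pi}K_1(\theta)\,f(Re^{i\theta})\,\frac{d\theta}{2\pi}+\int_0^{2\pi}K_2(\psi)\,f(R^{-1}e^{i\psi})\,\frac{d\psi}{2\pi},\]
with $K_1(\theta)=2\Re(1-e^{-i\theta}T/R)^{-1}+\tfrac c2 I$ and $K_2(\psi)=2\Re(1-e^{i\psi}T^{-1}/R)^{-1}-2I+\tfrac c2 I$. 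Two observations drive the argument: first, $K_1(-\theta)+K_2(\psi)$ is precisely the operator in the inequality of Lemma~\ref{DLAfromztotheta}, so $T\in\mathbb{DLA}_R(c)\iff K_1(\theta)+K_2(\psi)\ge 0$ for all $\theta,\psi$; second, because the two circle averages of $f$ agree, the displayed representation is unchanged if $(K_1,K_2)$ is replaced by $(K_1-B,\,K_2+B)$ for any bounded self‑adjoint $B$.

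For the implication $S_{R,c}$ contractive $\Rightarrow T\in\mathbb{DLA}_R(c)$, I fix a unit vector $x$: the functional $f\mapsto\langle S_{R,c}(f)x,x\rangle$ is unital and, by the maximum modulus principle (so that $\|f\|_{\mathcal A(A_R)}=\sup_{\partial A_R}|f|$), contractive on $\mathcal A(A_R)\subset C(\partial A_R)$; hence Hahn--Banach and the Riesz theorem produce a representing \emph{probability} measure $\tau_x$ on $\partial A_R$. Splitting $\tau_x=\mu_x^+ +\mu_x^-$ over the two circles and passing to the angle variable, one equates, for $m\ge1$, the quantity $\langle S_{R,c}(z^m)x,x\rangle=\tfrac1{2+c}\langle(T^m+R^{-2m}(T^*)^{-m})x,x\rangle$ (from the displayed formula) with $\int z^m\,d\tau_x=R^m\widehat{\mu_x^+}(-m)+R^{-m}\widehat{\mu_x^-}(-m)$, and likewise for $z^{-m}$; solving the resulting $2\times2$ system gives the clean identities $\langle T^m x,x\rangle=(2+c)R^m\widehat{\mu_x^+}(-m)$ and $\langle T^{-m}x,x\rangle=(2+c)R^m\widehat{\mu_x^-}(m)$ for $m\ge1$, the cross terms cancelling exactly because of the weights $R^{-2m}$ (this also forces geometric decay of $\widehat{\mu_x^\pm}$, hence continuous densities). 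Substituting these into the inequality of Lemma~\ref{DLAfromztotheta} and summing the geometric series, the left‑hand side collapses to $(2+c)$ times the sum of the densities of $\mu_x^+$ and $\mu_x^-$ at suitable points, which is $\ge0$; since $x$ is arbitrary, Lemma~\ref{DLAfromztotheta} gives $T\in\mathbb{DLA}_R(c)$.

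For the converse, $T\in\mathbb{DLA}_R(c)\Rightarrow S_{R,c}$ contractive, the starting point is again $K_1(\theta)+K_2(\psi)\ge0$ from Lemma~\ref{DLAfromztotheta}, and the goal is $\|S_{R,c}(f)\|\le\|f\|_\infty$. Running the previous paragraph in reverse, the pointwise positivity already yields, for each unit $x$, a \emph{scalar} $b_x$ with $\langle K_1(\theta)x,x\rangle\ge b_x\ge-\langle K_2(\psi)x,x\rangle$, hence a positive representing measure for $\langle S_{R,c}(\cdot)x,x\rangle$ and the \emph{numerical‑radius} bound $w(S_{R,c}(f))\le\|f\|_\infty$; the real work is to upgrade this to the norm bound. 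One tempting route is to produce a bounded self‑adjoint $B$ with $K_1-B\ge0$ and $K_2+B\ge0$ everywhere: then $\tfrac1{2+c}\big((K_1-B)\tfrac{d\theta}{2\pi}\big|_{\{|z|=R\}}\oplus(K_2+B)\tfrac{d\psi}{2\pi}\big|_{\{|z|=1/R\}}\big)$ is a positive operator‑valued measure of total mass $I$ representing $S_{R,c}$, and Naimark's dilation theorem gives $S_{R,c}(f)=V^*\big(\int_{\partial A_R}f\,dE\big)V$, whence $\|S_{R,c}(f)\|\le\sup_{\partial A_R}|f|=\|f\|_\infty$. However, the existence of such a $B$ is exactly the condition defining $\mathbb{CDLA}_R(c)$ (via Lemma~\ref{CAchar}–Theorem~\ref{2002}), so for $\mathbb{DLA}_R(c)$ this cannot be the argument, and there is in any case no general sandwiching lemma for families of self‑adjoint operators. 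The correct proof must instead combine the pointwise positivity of $K_1+K_2$ with the contractivity of the Cauchy‑transform map $f\mapsto C\overline f$ on $\mathcal A(A_R)$, through an abstract functional‑analytic lemma in the spirit of \cite[Theorem~2]{CrouzGreen} and \cite[Lemma~1.1]{RansfordSchwenninger}, the precise normalization of $S_{R,c}$ (the factor $\tfrac1{2+c}$ and the correction $c\,a_0$) being exactly what pins the constant at $1$ rather than a larger $K$‑spectral constant. This passage from numerical‑radius control to a sharp norm bound is the step I expect to be the main obstacle.
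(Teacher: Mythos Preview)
Your forward implication ($S_{R,c}$ contractive $\Rightarrow T\in\mathbb{DLA}_R(c)$) is correct, though considerably more elaborate than the paper's: the paper simply notes that contractivity of the unital map $S_{R,c}$ is equivalent to positivity of its canonical self-adjoint extension $S'_{R,c}$ to $\mathcal{A}(A_R)+\mathcal{A}(A_R)^*$ (hence to its closure $\mathcal{M}_R$), and then, assuming $T\notin\mathbb{DLA}_R(c)$, builds a positive bump function in $\mathcal{M}_R$ supported on small arcs where the kernel is negative to produce a contradiction. Your Fourier-coefficient computation (solving the $2\times2$ system and collapsing the DLA expression to $(2+c)$ times a sum of nonnegative densities) is a nice alternative and does go through.

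The genuine gap is in the converse. You correctly obtain, from $K_1(\theta)+K_2(\psi)\ge0$, a scalar $b_x$ for each unit $x$ giving $\langle K_1(\theta)x,x\rangle-b_x\ge0$ and $\langle K_2(\psi)x,x\rangle+b_x\ge0$; you then stop at the numerical-radius bound and declare that ``the real work is to upgrade this to the norm bound,'' proposing to do so via the Crouzeix--Greenbaum/Ransford--Schwenninger machinery. That route is wrong: those lemmas produce $K$-spectral constants strictly larger than $1$ (e.g.\ $1+\sqrt2$) and cannot yield exact contractivity. What you are missing is the elementary operator-algebra fact that the paper invokes at the outset: since $S_{R,c}$ is \emph{unital}, its contractivity is \emph{equivalent} to positivity of the canonical extension $S'_{R,c}:\mathcal{A}(A_R)+\mathcal{A}(A_R)^*\to\mathcal{B}(H)$ (see \cite[Cor.~2.9, Prop.~2.12]{Paulsenbook}). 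Positivity of $S'_{R,c}$ is checked vector by vector: one needs $\langle S'_{R,c}(h)x,x\rangle\ge0$ for every positive $h$ in (the closure of) $\mathcal{A}(A_R)+\mathcal{A}(A_R)^*$ and every $x$. But your scalar $b_x$ gives exactly this, because $(\langle K_1(\theta)x,x\rangle-b_x)\tfrac{d\theta}{2\pi}$ and $(\langle K_2(\psi)x,x\rangle+b_x)\tfrac{d\psi}{2\pi}$ are positive measures on the two circles whose equal-average constraint (which defines $\mathcal{M}_R$) absorbs the shift by $b_x$. So the step you call ``the main obstacle'' is in fact a one-line application of the unital-positive-contractive equivalence; there is no need for an operator $B$, no need for Naimark, and certainly no need for $K$-spectral technology.
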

\begin{proof}
 Write $\partial A_R=C_1\cup C_{-1},$ where $C_1$ (the outer circle) is counter-clockwise oriented, while $C_{-1}$ (the inner circle) is clockwise oriented. Also, in view of \cite[Corollary 2.9]{Paulsenbook}
 and \cite[Proposition 2.12]{Paulsenbook}, the fact that $S_{R, c}$ is unital allows us to deduce that $S_{R, c}$ is contractive if and only if 
 \begin{equation} \label{sprime}
     S'_{R, c}: \mathcal{A}(A_R)+\mathcal{A}(A_R)^*\to\mathcal{B}(H)\end{equation}
 $$f+\overline{g}=\sum_{n\in\mathbb{Z}}a_n z^n+\sum_{n\in\mathbb{Z}}\overline{b_n} \overline{z}^n\mapsto S_{R, c}(f)+S_{R, c}(g)^*$$ $$= \frac{1}{2+c}\bigg[ \int_{\partial A_R}\mu(\sigma, T)(f+\overline{g})(\sigma)ds +c(a_0+\overline{b_0})\bigg] $$
 is positive. Let $C(\partial A_R)$ denote the algebra of continuous functions on $\partial A_R$ and recall that the closure of $\mathcal{A}(A_R)+\mathcal{A}(A_R)^*$ in $C(\partial A_R)$ is the codimension one subspace (see \cite[p. 80]{Paulsenbook}) 
 $$\mathcal{M}_R=\Big\{f\in C(\partial A_R): \frac{1}{2\pi R}\int_{C_1} f(\sigma)ds=\frac{R}{2\pi}\int_{C_{-1}} f(\sigma)ds  \Big\}.$$
Thus, our goal will be to show that $T\in\mathbb{DLA}_R(c)$  if and only if $\tilde{S}_{R, c}$ is positive over $\mathcal{M}_R$, where 
$$\tilde{S}_{R, c}: \mathcal{M}_R\to\mathcal{B}(H) $$
 $$f\mapsto \frac{1}{2+c}\bigg[ \int_{\partial A_R}\mu(\sigma, T)f(\sigma)ds +\frac{c}{2\pi i}\int_{C_1}\frac{f}{\sigma}\sigma'ds\bigg]$$
 (note that $S'_{R, c}\equiv \tilde{S}_{R, c}$ over $\mathcal{A}(A_R)+\mathcal{A}(A_R)^*$). We require the  following lemma.
 \begin{lemma}\label{somelemma}
In the setting of Theorem \ref{doublelayer}, $T\in\mathbb{DLA}_R(c)$ if and only if 
$$ R\mu(\sigma_1, T) + R^{-1}\mu(\sigma_2, T)+\frac{c}{2\pi}\ge 0,$$
for all $\sigma_1\in C_1$ and $\sigma_2\in C_{-1}.$
 \end{lemma}
 \begin{proof}[Proof of Lemma \ref{somelemma}]
 For $\sigma_1\in C_1,$ the arc-length parametrization gives $\sigma_1=Re^{i\theta}, \sigma'_1=ie^{i\theta}$, while for $\sigma_2\in C_{-1}$ we obtain $\sigma_2=R^{-1}e^{-i\psi}, \sigma'_2=-ie^{-i\psi},$ where $\theta, \psi\in [0, 2\pi).$ Thus, we may write 
 $$2\pi\big[R\mu(\sigma_1, T) + R^{-1}\mu(\sigma_2, T)\big]+c$$
 $$=2\Re \big[ Re^{i\theta}(Re^{i\theta}-T)^{-1}\big]+2\Re \big[ -R^{-1}e^{-i\psi}(R^{-1}e^{-i\psi}-T)^{-1}\big]+c  $$
 $$=2\Re \big[(1-e^{-i\theta}T/R)^{-1}+(1-e^{-i\psi}T^{-1}/R)^{-1}              \big]+c-2,$$
 which is positive for all $\theta, \psi\in [0, 2\pi)$ if and only if $T\in\mathbb{DLA}_R(c)$.
 \end{proof}
Now, assume that $\tilde{S}_{R, c}$ is positive and that $T\notin\mathbb{DLA}_R(c)$. In view of Lemma \ref{somelemma}, there exist $\eta_1\in C_1$,  $\eta_2\in C_{-1}$ and a unit vector $v\in H$ such that 
\begin{equation}\label{negative}
\Big \langle \Big(R\mu(\eta_1, T) + R^{-1}\mu(\eta_2, T)+\frac{c}{2\pi}\Big)v, v\Big\rangle=2k<0. 
\end{equation}
Since $\sigma(T)\subset A_R,$ we know that both of the maps 
$\sigma_1\mapsto  \langle \mu(\sigma_1, T)v, v\rangle $
and $\sigma_2\mapsto \langle \mu(\sigma_2, T)v, v\rangle$ are continuous. Thus, in view of (\ref{negative}), we can find small arcs $I_1\subset C_1$ and $I_{-1}\subset C_{-1}$ of equal length and centered at $\eta_1$ and $\eta_2$ respectively such that
$$ 
\Big \langle \Big(R\mu(\sigma_1, T) + R^{-1}\mu(\sigma_2, T)+\frac{c}{2\pi}\Big)v, v\Big\rangle\le k<0, 
$$ 
for all $\sigma_1\in I_1$ and all $\sigma_2\in I_{-1}.$ 
From this, we easily deduce the existence of $t\in\mathbb{R}$ such that 

\begin{equation}\label{negativeext1}
\Big \langle \Big(R\mu(\sigma_1, T) +\frac{c}{2\pi}\Big)v, v\Big\rangle\le k/2+t 
\end{equation}
and 
\begin{equation}\label{negativeext2}
\big \langle R^{-1}\mu(\sigma_2, T)v, v\big\rangle\le k/2-t, 
\end{equation}
for all $\sigma_1\in I_1$ and all $\sigma_2\in I_{-1}.$
Now, take $g: \partial A_R\to\mathbb{C}$ to be a continuous function such that $0\le g\le 1$, $g(z)=0$ for $z$ outside $I_1\cup I_{-1}$ and also $d=\frac{1}{2\pi R}\int_{C_1} g(\sigma)ds=\frac{R}{2\pi}\int_{C_{-1}} g(\sigma)ds>0$. Then,  $g\in\mathcal{M}_R$  and so $\tilde{S}_{R, c}(g)$ must be a positive operator. However, observe that by (\ref{negativeext1}) and (\ref{negativeext2}),
$$(2+c)\langle \tilde{S}_{R, c}(g) v, v\rangle=\int_{\partial A_R}\big\langle\mu(\sigma, T)v, v \big\rangle g(\sigma)ds +\frac{c}{2\pi R}\int_{C_1} g(\sigma_1)ds  $$
$$=\int_{C_1}\langle R\mu(\sigma_1, T)v, v \rangle\frac{g(\sigma_1)}{R}ds + \int_{C_{-1}}\langle R^{-1}\mu(\sigma_2, T)v, v \rangle Rg(\sigma_2)ds+\frac{c}{2\pi R}\int_{C_1} g(\sigma_1)ds $$
 $$\le (k/2+t)\frac{1}{ R}\int_{C_1} g(\sigma_1)ds+(k/2-t)R\int_{C_{-1}} g(\sigma_2)ds $$
 $$=2\pi dk<0,$$
 a contradiction. Thus, we must have $T\in\mathbb{DLA}_R(c)$. \par 
 Conversely, assume $T\in\mathbb{DLA}_R(c)$. Fix $v\in H$. Lemma \ref{somelemma} tells us that we can find $t\in\mathbb{R}$ such that 

\begin{equation}\label{negativeext3}
\Big \langle \Big(R\mu(\sigma_1, T) +\frac{c}{2\pi}\Big)v, v\Big\rangle\ge t
\end{equation}
and 
\begin{equation}\label{negativeext4}
\big \langle R^{-1}\mu(\sigma_2, T)v, v\big\rangle\ge -t, 
\end{equation}
for all $\sigma_1\in C_1$ and all $\sigma_2\in C_{-1}.$ Now, let $f\in \mathcal{M}_R$ be positive. Since $\frac{1}{2\pi R}\int_{C_1} f(\sigma)ds=\frac{R}{2\pi}\int_{C_{-1}} f(\sigma)ds$, we may write (in view of (\ref{negativeext3})-(\ref{negativeext4}))
$$(2+c)\langle \tilde{S}_{R, c}(f) v, v\rangle$$ $$=\int_{C_1}\Big \langle \Big(R\mu(\sigma_1, T) +\frac{c}{2\pi}\Big)v, v\Big\rangle \frac{f(\sigma_1)}{R}ds+\int_{C_{-1}}\big \langle R^{-1}\mu(\sigma_2, T)v, v\big\rangle Rf(\sigma_2)ds $$
$$\ge \frac{t}{R}\int_{C_1} f(\sigma_1)ds-tR\int_{C_{-1}} f(\sigma_2)ds $$
$$=0.$$
Since $v\in H$ was arbitrary, $\tilde{S}_{R, c}(f)$ has to be a positive operator and we are done.
\end{proof}

  \par \small 
{\section{THE $\mathbb{CDLA}_{R}(c)$ CLASS}\label{completeversion}}
 \large 
 In this section, we introduce and characterize $\mathbb{CDLA}_{R}(c)$, the ``complete version" of $\mathbb{DLA}_{R}(c)$.  Our main result is Theorem \ref{2002}. One can also work with operators satisfying $\sigma(T)\subset \overline{A_R}$ to obtain the more general Theorem \ref{droptheorem}.\\
 \begin{definition}
    Assume $ R>1, c>-2$ and $T\in\mathcal{B}(H)$ satisfies $\sigma(T)\subset A_R.$ Then, $T\in \mathbb{CDLA}_R(c)$ if the mapping
 $$S_{R, c}: \mathcal{A}(A_R)\to\mathcal{B}(H) $$
 $$f=\sum_{n\in\mathbb{Z}}a_n z^n \mapsto \frac{1}{2+c}\bigg[ \int_{\partial A_R}\mu(\sigma, T)f(\sigma)ds +ca_0\bigg] $$
 is completely contractive.
 \end{definition}
 
We first establish one direction of Theorem \ref{2002}.
\begin{lemma}\label{onedir}
 Assume $ R>1, c>-2$ and $T\in\mathcal{B}(H)$ satisfies $\sigma(T)\subset A_R.$ If there exists $A\in\mathcal{B}(H)$ such that $A>0, \ 2+c-A>0$ and $T/R\in \mathcal{C}_{2+c-A}$ and $T^{-1}/R\in \mathcal{C}_A $, then $T\in \mathbb{CDLA}_R(c)$.
\end{lemma}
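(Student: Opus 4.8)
The plan is to produce, from the hypotheses $T/R\in\mathcal{C}_{2+c-A}$ and $T^{-1}/R\in\mathcal{C}_A$, a single ``operatorial'' representation of the map $S_{R,c}$ as a compression of a $*$-homomorphism, which will automatically give complete contractivity. The key observation is that membership in the Langer classes $\mathcal{C}_B$ is itself a dilation statement: $T/R\in\mathcal{C}_{2+c-A}$ means there is a Hilbert space $K_1\supset H$ and a unitary $U_1\in\mathcal{B}(K_1)$ with
$$(2+c-A)^{-1/2}(T/R)^n(2+c-A)^{-1/2}=P_HU_1^n|_H,\qquad n\ge 1,$$
and similarly $A^{-1/2}(T^{-1}/R)^nA^{-1/2}=P_HU_2^n|_H$ on some $K_2\supset H$ with $U_2$ unitary. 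Equivalently, writing $B=2+c-A$, we get for $n\ge 1$ the formula $(T/R)^n=B^{1/2}P_HU_1^n|_H B^{1/2}$ (as maps $H\to H$), and $(T^{-1}/R)^n=A^{1/2}P_HU_2^n|_H A^{1/2}$.

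First I would translate the definition of $S_{R,c}$ on $\mathcal{A}(A_R)$ into a power-series formula. For $f=\sum_{n\in\mathbb{Z}}a_nz^n\in\mathcal{A}(A_R)$ with $\sigma(T)\subset A_R$, one has $\int_{\partial A_R}\mu(\sigma,T)f(\sigma)\,ds=f(T)+(C\overline f)(T)^*$, and unwinding the Cauchy/Laurent expansions (exactly as in the disk case leading to the second line of \eqref{2001}) this equals
$$S_{R,c}(f)=\frac{1}{2+c}\Big[a_0(2+c)I+\sum_{n\ge 1}a_n(T/R)^n+\sum_{n\ge 1}a_{-n}(T^{-1}/R)^n+\sum_{n\ge 1}\overline{a_{-n}}\big((T/R)^n\big)^*+\sum_{n\ge 1}\overline{a_n}\big((T^{-1}/R)^n\big)^*\Big]$$
— here I am using that the positive-index part of $f$ is holomorphic on $R\mathbb{D}$ so $f_+(T)$ sees only $T^n$, and the negative-index part is holomorphic at $\infty$ so it produces $T^{-n}$; the two Cauchy-transform-of-conjugate terms contribute the adjoints. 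Then I substitute the two dilation formulas. Grouping the ``outer'' terms ($a_0/2\cdot$ times $B^{1/2}(\cdots)B^{1/2}$ built from $U_1$, $U_1^*$) and the ``inner'' terms (built from $U_2$, $U_2^*$ with $A^{1/2}$), and using that $\tfrac12(1+\sum_{n\ge1}\lambda^n+\sum_{n\ge1}\overline\lambda^n)=\tfrac12 + \Re\sum_{n\ge1}\lambda^n = \Re\frac{1}{1-\lambda}-\tfrac12$ — i.e. the Poisson-type kernel — one should arrive at
$$(2+c)\,S_{R,c}(f)=B^{1/2}\Big(P_H\,\tfrac12\big(f_+(U_1)+f_+(U_1)^*-a_0 I\big)\big|_H\Big)B^{1/2}+A^{1/2}\Big(P_H\,\tfrac12\big(f_-(U_2)+f_-(U_2)^*-a_0 I\big)\big|_H\Big)A^{1/2}+a_0(A+B)\cdot(\text{correction}),$$
and since $A+B=2+c$ the bookkeeping on the constant term closes up. The upshot is a formula of the shape $S_{R,c}(f)=B^{1/2}P_H\,\Psi_1(f)\,|_H\,B^{1/2}+A^{1/2}P_H\,\Psi_2(f)\,|_H\,A^{1/2}$ where $\Psi_1(f)$ is a function of the \emph{unitary} $U_1$ (hence $\|\Psi_1(f)\|\le\|f\|_\infty$ with all ampliations, by the spectral theorem / von Neumann for normal operators) and likewise $\Psi_2$ — and $B+A=(2+c)I$, so $S_{R,c}$ is a convex-combination-type compression of completely contractive maps and is therefore completely contractive.

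The main obstacle, and the step I would spend the most care on, is the middle one: verifying that the constant ($n=0$) terms bookkeep correctly so that the $a_0$ coefficient comes out to be exactly what the definition of $S_{R,c}$ demands, \emph{and} that the cross terms do not interfere — i.e. that after substituting the dilations the expression genuinely splits as $B^{1/2}(\cdots)B^{1/2}+A^{1/2}(\cdots)A^{1/2}$ with each parenthesis a compression of a function of a single unitary. Concretely, each $(T/R)^n$ for $n\ge1$ is $B^{1/2}P_HU_1^n|_HB^{1/2}$, its adjoint is $B^{1/2}P_HU_1^{-n}|_HB^{1/2}$ (using $U_1$ unitary and $B$ self-adjoint), and summing over $n$ with the coefficients $a_n,\overline{a_{-n}}$ one has to recognize $\sum_{n\ge1}(a_nz^n+\overline{a_{-n}}z^{-n})$ evaluated ``à la Poisson'' at $U_1$; the freedom in how one distributes the constant $a_0 I = \tfrac{1}{2+c}a_0(A+B)$ between the two blocks is exactly what makes this work, and is the reason the hypothesis $A+(2+c-A)=2+c$ is used. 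Once the algebra is arranged, complete contractivity is immediate: for $[f_{ij}]\in M_k(\mathcal{A}(A_R))$, $\|[\Psi_1(f_{ij})]\|\le\|[f_{ij}]\|_\infty$ and $\|[\Psi_2(f_{ij})]\|\le\|[f_{ij}]\|_\infty$ because $\Psi_\ell$ factors through the unitary $U_\ell$ and unitaries are normal (so their functional calculus is completely isometric on $\mathcal{A}(A_R)$ regarded inside $C(\partial A_R)$), and then $\|[S_{R,c}(f_{ij})]\|\le\frac{1}{2+c}\big(\|B\|\cdot 1 + \|A\|\cdot 1\big)$-type estimate is replaced by the sharper convexity estimate coming from $B+A=(2+c)I\ge 0$, giving $\|[S_{R,c}(f_{ij})]\|\le\|[f_{ij}]\|_\infty$. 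I would conclude by remarking that the scalar case of exactly this computation is the content of the formula \eqref{2001} in the disk setting, so the annulus version is a two-block analogue and no genuinely new inequality is needed beyond the Langer-class dilation and von Neumann's inequality for unitaries.
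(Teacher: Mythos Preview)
Your overall strategy---build a Stinespring-type factorization of $S_{R,c}$ out of the two unitary dilations $U_1,U_2$ and conclude complete contractivity from $A+B=(2+c)I$---is sound and does lead to a correct proof. However, the concrete formulae you write down are wrong, and the error is structural, not just bookkeeping. Your displayed power-series identity for $S_{R,c}(f)$ replaces $T^n$ by $(T/R)^n$ and loses all the $R$-powers; in fact for $n\ge 1$ one has $(2+c)S_{R,c}(z^n)=T^n+R^{-2n}(T^*)^{-n}$, not $(T/R)^n+((T^{-1}/R)^n)^*$. More importantly, your proposed splitting ``$f_+$ goes to the $U_1$-block, $f_-$ to the $U_2$-block'' cannot work: the $B$-piece must absorb terms from \emph{all} Laurent coefficients of $f$ (for $n\ge 1$ it contributes $a_nT^n$, for $n\le -1$ it contributes $a_{-m}R^{-2m}(T^*)^m$), so no expression of the form $g(f_+)(U_1)$ will do. The correct splitting is by \emph{boundary component}, not by sign of the index: one checks directly that
\[
(2+c)\,S_{R,c}(f)=B^{1/2}P_H\,f(RU_1)\big|_H\,B^{1/2}+A^{1/2}P_H\,f(R^{-1}U_2^{-1})\big|_H\,A^{1/2},
\]
where $RU_1$ and $R^{-1}U_2^{-1}$ are normal with spectra in $C_1$ and $C_{-1}$ respectively. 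With this formula your final ``convexity'' step is valid: writing the right-hand side as $V^*\big(f(RU_1)\oplus f(R^{-1}U_2^{-1})\big)V$ with $V=\begin{pmatrix}\iota_1 B^{1/2}\\ \iota_2 A^{1/2}\end{pmatrix}$ and $V^*V=B+A=(2+c)I$ gives complete contractivity.

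The paper takes a different and somewhat cleaner route: rather than invoking the dilation \emph{definition} of $\mathcal{C}_A$, it uses the resolvent characterization (Lemma~\ref{CAchar}) to produce a pointwise-positive operator kernel $\nu_A(\sigma,T)$ on $\partial A_R$ with $\int_{\partial A_R}\nu_A\,ds=(2+c)I$, and then observes that $(2+c)S_{R,c}(f)=\int_{\partial A_R}f(\sigma)\,\nu_A(\sigma,T)\,ds$. This is the same Stinespring picture in disguise (the kernel $\nu_A$ is the density of the POVM associated to your $V,U_1,U_2$), but it sidesteps the Laurent-coefficient bookkeeping entirely and makes the role of the boundary decomposition $C_1\cup C_{-1}$ transparent from the start.
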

\begin{proof}
Assume $T$ and $A$ satisfy the given hypotheses. We again write $\partial A_R=C_1\cup C_{-1},$ where $C_1$ (the outer circle) is counter-clockwise oriented, while $C_{-1}$ (the inner circle) is clockwise oriented. \par Now, define the self-adjoint operator 
    $$\nu_A(\sigma, T):=\mu(\sigma, T)+\frac{\sigma'}{\sigma}\frac{(c-A)}{2\pi i}, \hspace{0.2 cm}\forall \sigma\in\partial C_1,$$
    and  $$\nu_A(\sigma, T):=\mu(\sigma, T)-\frac{\sigma'}{\sigma}\frac{A}{2\pi i}, \hspace{0.2 cm}\forall \sigma\in\partial C_{-1}.$$
   Note that, if $\sigma\in C_1,$ we have $\sigma=Re^{i\theta}$ and $s=R\theta,$ thus $$\nu_A(\sigma, T)=\mu(Re^{i\theta}, T)+\frac{c-A}{2\pi R}\ge 0,$$ for all $\theta$, as $T/R\in \mathcal{C}_{2+c-A}$. Also, if $\sigma\in C_{-1},$ we can write $\sigma=R^{-1}e^{-i\phi}$ and $s=R^{-1}\phi,$ hence  $$\nu_A(\sigma, T)=\mu(R^{-1}e^{-i\phi}, T)+\frac{R}{2\pi}A $$ 
   $$=\frac{1}{2\pi }\big(-e^{-i\phi}(R^{-1}e^{-i\phi}-T)^{-1}+e^{i\phi}(R^{-1}e^{i\phi}-T^*)^{-1}\big)+\frac{R}{2\pi}A $$
   $$=\frac{R}{2\pi}\Big(2\Re\big(R^{-1}T^{-1}e^{-i\phi}(I-T^{-1}e^{-i\phi}R^{-1})^{-1}\big)+A\Big)$$
   $$=\frac{R}{2\pi}\big(2\Re(I-T^{-1}e^{-i\phi}R^{-1})^{-1}+A-2\big)\ge 0,$$
  for all $\phi,$ as $T^{-1}/R\in\mathcal{C}_{A}$. \par Next, we consider the coordinate-wise map $S_{R, c}^{(m)}:M_m(\mathcal{A}(A_R))\to \mathcal{B}(H^{(m)}),$ for $m\ge 1$. Here, $M_m(\mathcal{A}(A_R))$ denotes the algebra of all matrix-valued $F: A_R\to \mathbb{C}^{m\times m}$ that are (coordinate-wise) analytic and admit a continuous extension to $\overline{A_R}.$ For any such $F=\sum_{n\in\mathbb{Z}} A_n\otimes z^n,$ we can write 
   $$(2+c)S^{(m)}_{R, c}(F)=\int_{\partial A_R}F(\sigma)\otimes \mu(\sigma, T)ds+cA_0\otimes I
    $$ 
    $$=\int_{\partial A_R}F(\sigma)\otimes \mu(\sigma, T)\ ds+\frac{c}{2\pi i}\int_{C_1}\frac{\sigma'}{\sigma}F(\sigma)\otimes I\ ds $$
    $$=\int_{\partial A_R}F(\sigma)\otimes \mu(\sigma, T)\ ds+\frac{c}{2\pi i}\int_{C_1}\frac{\sigma'}{\sigma}F(\sigma)\otimes I\ ds-\frac{A}{2\pi i}\int_{\partial A_R} \frac{\sigma'}{\sigma}F(\sigma)\otimes I\ ds,$$
    $$=\int_{\partial A_R}F(\sigma)\otimes \nu_A(\sigma, T)\ ds.$$

   But now, since $\nu_{A}(\sigma, T)\ge 0$ for every $\sigma$ in $\partial A_R$ and also $\int_{A_R}\nu_{A}(\sigma, T)\ ds=(2+c)I,$ one can show (see e.g. the proof of Lemma 2.2 in \cite{bivariate}) that $S_{R, c}^{(m)}$ is contractive, for every $m\ge 1.$ This concludes the proof.
\end{proof}

 We now prove a lemma; the $\mathcal{C}_A$ classes do not contain any invertible operators if $A$ is not invertible. 
\begin{lemma}\label{notinvertCA}
Let $A\in\mathcal{B}(H)$ be a positive operator that is not invertible. Assume also that $T\in\mathcal{B}(H)$ satisfies $\sigma(T)\subset\mathbb{D}$ and 
$$  
2\Re(1-zT)^{-1}+A-2\ge 0, \hspace{0.3 cm}  \forall z\in \mathbb{D}.
$$ 
Then, $T$ is not invertible.
\end{lemma}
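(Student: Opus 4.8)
The plan is to exploit the positivity condition at the boundary of $\mathbb{D}$, together with the fact that $A$ fails to be bounded below, to produce a direction along which $(1-zT)^{-1}$ would have to blow up as $z$ approaches a boundary point — which is impossible since $\sigma(T)\subset\mathbb{D}$. Concretely, since $A\ge 0$ is not invertible, it is not bounded below, so there is a sequence of unit vectors $h_k\in H$ with $\langle Ah_k,h_k\rangle\to 0$. Feeding these into the hypothesis
$$
2\Re\langle(1-zT)^{-1}h_k,h_k\rangle + \langle Ah_k,h_k\rangle - 2\ge 0,\qquad \forall z\in\mathbb{D},
$$
gives, in the limit, that $\Re\langle(1-zT)^{-1}h_k,h_k\rangle\ge 1 - \tfrac12\langle Ah_k,h_k\rangle$ with the right side tending to $1$. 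The function $z\mapsto \langle(1-zT)^{-1}h_k,h_k\rangle$ is holomorphic on $\mathbb{D}$, has value $1$ at $z=0$, and positive real part bounded below by roughly $1$; by Herglotz it is represented by a positive measure $\mu_k$ on the circle with $\mu_k(\mathbb{T}) = \langle(1-0)^{-1}h_k,h_k\rangle = 1$. Comparing Fourier coefficients gives $\langle T^n h_k,h_k\rangle = 2\int e^{-in\theta}\,d\mu_k$ for $n\ge 1$; but the near-positivity of the real part forces the negative part of $\mu_k - \tfrac12(\text{normalized arclength})$ to be small, i.e. $\mu_k$ is forced to be close to a genuine Herglotz measure of a \emph{contraction}-type function, squeezing the mass of $\mu_k$. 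This is the mechanism by which invertibility of $T$ is obstructed.

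The cleaner route, which I would actually pursue, is via the reciprocity/scaling already set up earlier: suppose toward a contradiction that $T$ is invertible. The condition $2\Re(1-zT)^{-1}+A-2\ge 0$ on $\mathbb{D}$ says precisely (Lemma \ref{CAchar}, in its ``$A$ self-adjoint'' extended form) that $T\in\mathcal{C}_A$, and in particular $T\in\mathcal{C}_{\|A\|}$, hence $\sigma(T)\subset\overline{\mathbb{D}}$ — indeed $\subset\mathbb{D}$ by hypothesis. Now I would look at the behavior as $z\to e^{i\theta}$ radially. For each unit vector $h$ with $\|Th^{-1}\|$ controlled... more precisely: invertibility of $T$ means $0\notin\sigma(T)$, so $1/T$ is bounded, and one can examine the quadratic-form inequality
$$
\langle Ah,h\rangle \ge 2 - 2\Re\langle(1-zT)^{-1}h,h\rangle
$$
at a cleverly chosen $h$ and $z$. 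Following the algebra in the proof of Lemma \ref{CAchar} in reverse, the hypothesis is equivalent to
$$
\langle Ah,h\rangle - 2\Re\langle z(A-I)Th,h\rangle + |z|^2\langle(A-2)Th,Th\rangle \ge 0,\qquad \forall h, |z|\le 1.
$$
Taking $|z|=1$ and then choosing $h = T^{-1}g$ for an arbitrary unit vector $g$, this becomes a statement of the form
$$
\langle A T^{-1}g, T^{-1}g\rangle + (\text{terms in } \langle g,\cdot\rangle) \ge 0,
$$
and optimizing over the phase of $z$ shows $\langle A T^{-1}g,T^{-1}g\rangle$ is bounded below by a strictly positive quantity uniform in $g$ (coming from the ``$-2\Re$'' term which survives). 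That uniform lower bound says $A$ is bounded below — contradicting non-invertibility of $A$.

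The main obstacle is getting the \emph{uniform} strictly-positive lower bound out of the boundary inequality: one must choose the auxiliary point $z$ on the unit circle (as a function of $g$, picking the phase that makes the cross term maximally negative) and verify that, after substituting $h=T^{-1}g$, the resulting expression is $\langle AT^{-1}g,T^{-1}g\rangle \ge \delta\|g\|^2$ for a $\delta>0$ independent of $g$; here invertibility of $T$ is exactly what keeps $\|T^{-1}g\|$ from degenerating. A secondary technical point is justifying the extended form of Lemma \ref{CAchar} for merely self-adjoint (not strictly positive) $A$, but as the remark following that lemma notes, the same algebraic manipulation goes through verbatim. Once the uniform lower bound is in hand, $A$ bounded below contradicts the hypothesis that $A$ is not invertible, completing the proof.
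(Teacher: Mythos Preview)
Your proposal has a genuine gap at exactly the point you flag as ``the main obstacle.'' After substituting $h=T^{-1}g$ into the quadratic-form inequality and optimizing the phase of $z$ on the unit circle, what you actually obtain is
\[
\langle AT^{-1}g,T^{-1}g\rangle+\langle Ag,g\rangle-2\|g\|^2\ \ge\ 2\bigl|\langle(A-I)T^{-1}g,g\rangle\bigr|.
\]
If you now feed in unit vectors $g_k$ with $\langle Ag_k,g_k\rangle\to 0$ (so $Ag_k\to 0$), the cross term simplifies and you get $\langle AT^{-1}g_k,T^{-1}g_k\rangle\ge 2+o(1)$. But this is not the uniform bound $\langle AT^{-1}g,T^{-1}g\rangle\ge\delta\|g\|^2$ for \emph{all} $g$ that you claim; it is only a lower bound along the particular sequence $T^{-1}g_k$, and says nothing about $\langle Ag_k,g_k\rangle$ itself. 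The argument is circular: the inequality bounds $\langle AT^{-1}g,T^{-1}g\rangle$ from below only \emph{after} you assume $\langle Ag,g\rangle$ is small, so it cannot be turned around to force $A$ to be bounded below. (Concretely, nothing here rules out $\langle Ag_k,g_k\rangle\to 0$ while $\langle AT^{-1}g_k,T^{-1}g_k\rangle$ stays large, and for general $A$ not commuting with $T$ there is no reason these should be linked.) Your Herglotz sketch at the start is likewise left unfinished and does not supply the missing step.

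The paper's proof takes an entirely different route and avoids this difficulty. It first reinterprets the hypothesis as contractivity of the map
\[
S_A:\mathcal{A}(\mathbb{D})\to\mathcal{B}(H),\qquad f\mapsto \int_{\partial\mathbb{D}}\mu(\sigma,T)f(\sigma)\,ds+f(0)(A-2),
\]
which (by commutativity of $\mathcal{A}(\mathbb{D})$) extends completely positively to $C(\partial\mathbb{D})$. Stinespring's theorem then gives $\tilde S_A(f)=V^*\pi(f)V$; evaluating at $f\equiv 1$ yields $A=V^*V$, and at $f=z$ yields $T=A^{1/2}Y^*UYA^{1/2}$ for a partial isometry $Y$ and a unitary $U=\pi(z)$. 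Since $A^{1/2}$ is not invertible, $T$ cannot be either. The key idea you are missing is precisely this structural factorization of $T$ through $A^{1/2}$, which the dilation produces in one stroke and which no amount of pointwise optimization over $z$ in the quadratic form seems to recover.
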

\begin{proof}
Arguing as in the proof of Theorem \ref{doublelayer}, one can show that $T$ satisfies 
$$  
2\Re(1-zT)^{-1}+A-2\ge 0, \hspace{0.3 cm}  \forall z\in \mathbb{D}.
$$ 
if and only if the mapping 
$$S_{A}:\mathcal{A}(\mathbb{D})\to\mathcal{B}(H)$$
$$f\mapsto \int_{\partial\mathbb{D}}\mu(\sigma, T)f(\sigma)ds +f(0)(A-2)$$
    is contractive. But then, we know (see e.g. \cite[Chapters 2-3]{Paulsenbook}) that $S_{A}$ is contractive if and only if it has a completely positive extension to all of $C(\partial\mathbb{D})$. Let $\tilde{S}_A$ denote such an extension. The non-unital version of Stinespring's Theorem \cite[Theorem 4.1]{Paulsenbook} then implies the existence of a Hilbert space $K\supset H$, a unital $*$-homomorphism $\pi: C(\partial\mathbb{D})\to\mathcal{B}(K)$ and a bounded operator $V: H\to K$ such that 
   \begin{equation}\label{rep}
       \tilde{S}_A(f)=V^*\pi(f)V, \hspace{0.3 cm} \forall f\in C(\partial\mathbb{D}).
   \end{equation}
    Set $U=\pi(z).$ It is easy to see that $U$ will then be a unitary operator. Setting $f\equiv 1$ in (\ref{rep}) gives us $A=V^*V,$ thus the polar decomposition of $V$ will be given by $YA^{1/2},$ where $Y$
is some partial isometry. But observe also that putting $f=z$ in (\ref{rep}) gives us 
$$T=A^{1/2}Y^*UYA^{1/2}.$$
Since $A^{1/2}$ is not invertible, we conclude that $T$ cannot be invertible.
\end{proof}

Before we finish off the proof of Theorem \ref{2002}, a few dilation-theoretic observations are in order. Assume $T\in \mathcal{B}(H)$, with $\sigma(T)\subset A_R$. It is then well-known that $T\in \mathbb{CDLA}_R(c)$ if and only if the mapping (\ref{sprime}) is completely positive. Further, by Stinespring's Theorem, this is equivalent to the existence of a Hilbert space $K\supset H$ and a unital $*$-homomorphism $\pi: C(\partial A_R)\to\mathcal{B}(K)$ such that, for $f=\sum a_n z^n\in \mathcal{A}(A_R)$,
$$(2+c)S_{R, c}(f)=f(T)+(C\overline{f})(T)^*+ca_0=(2+c)P_H\pi (f)|_H.$$
Set $\pi(z)=N.$ Then, $N$ will be a normal operator satisfying $\sigma(N)\subset \partial A_R$ and our previous equality becomes, for $f(z)=z^n$, 
$$T^n+(C\overline{z^n})(T)^*=(2+c)P_H N^n|_H, \hspace*{0.3 cm} \forall n\neq 0.$$
After some computations, one verifies that $$(C\overline{z^n})(\zeta)=\frac{1}{2\pi i}\int_{\partial A_R}\frac{\overline{z^n}}{z-\zeta}\ dz=R^{-2|n|}\zeta^{-n},$$
hence
\begin{equation} \label{naivedil}
  T^n+R^{-2|n|}T^{-n*}=(2+c)P_H N^n|_H, \hspace*{0.3 cm} \forall n\neq 0.
\end{equation}
On the other hand, Theorem \ref{2002} tells us that $T\in \mathbb{CDLA}_R(c)$ if and only if there exists $A\in\mathcal{B}(H)$ such that $A>0, \ 2+c-A>0$ and $T/R\in \mathcal{C}_{2+c-A}$ and $T^{-1}/R\in \mathcal{C}_A$. In view of Definition \ref{CAAAA}, this is equivalent to the existence of a Hilbert space $K'\supset H$ and unitaries $U_1, U_{-1}\in\mathcal{B}(K')$ such that 
\begin{equation}\label{actual1}
R^{-n}T^n=(2+c-A)^{1/2}P_H U^n_1(2+c-A)^{1/2}|_H, \hspace*{0.3 cm} \forall n\ge 1,   
\end{equation}
and 
\begin{equation}\label{actual2}
R^{-n} T^{-n}=A^{1/2}P_H U^n_{-1} A^{1/2}|_H, \hspace*{0.3 cm} \forall n\ge 1.  
\end{equation}
Thus, the content of Theorem \ref{2002} is that \ref{naivedil} holds if and only if there exists $A\in\mathcal{B}(H)$ with $0<A<2+c$ such that (\ref{actual1}) and (\ref{actual2}) hold. It would be of interest to find a direct proof of this assertion, using only the dilations $U_1, U_{-1}$ and $N.$

 \begin{proof}[Proof of Theorem \ref{2002}]
All that is left is to establish the converse of Lemma \ref{onedir}. 
\par 
Accordingly, assume $T\in \mathbb{CDLA}_R(c)$, with $\sigma(T)\subset A_R$. By Arveson's Theorem, $S_{R, c}$ extends to a completely positive map $\Psi_{R, c}: C(\partial A_R)\to\mathcal{B}(H)$. Next, consider the mapping 
$$\psi_{R, c}: C(\partial A_R)\to\mathcal{B}(H) $$
 $$f\mapsto \frac{1}{2+c}\bigg[ \int_{\partial A_R}\mu(\sigma, T)f(\sigma)ds +\frac{c}{2\pi i}\int_{C_1}\frac{f}{\sigma}\sigma'ds\bigg],$$
 which is an alternate (not necessarily positive) extension of $S_{R, c}$. Since $||\mu(\sigma, T)||$ is uniformly bounded with respect to $\sigma$ (because of the assumption $\sigma(T)\subset A_R$), we can estimate
 $$\bigg| \bigg|\int_{\partial A_R} \mu(\sigma, T)f(\sigma)ds\bigg|\bigg|\le ||f||_{\infty}\int_{\partial A_R} ||\mu(\sigma, T)||ds \le M||f||_{\infty}, \hspace{0.3 cm} \forall f\in C(\partial A_R).$$
 Thus, $\psi_{R, c}$ is bounded. Observe also that both $\psi_{R,c}$ and $\Psi_{R,c}$ are actually extensions of the (completely positive) map $\tilde{S}_{R, c}: \mathcal{M}_R\to\mathbb{C}$ defined in the proof of Theorem \ref{doublelayer}. For $\psi_{R,c}$ this is obvious, while for $\Psi_{R,c}$ it holds because the completely contractive map $S_{R, c}$ has a unique completely positive extension to the closure of $\mathcal{A}(A_R)+\mathcal{A}(A_R)^*$.\par  
Now, define $S: C(\partial A_R)\to\mathcal{B}(H)$ as $S=\psi_{R, c}-\Psi_{R, c}.$ Fix an orthonormal basis $\{e_j\}$ of $H$ and put 
$$S_{ij}:C(\partial A_R)\to\mathbb{C}$$
$$f\mapsto \langle S(f) e_j, e_i\rangle.$$
$S_{ij}$ will then be a bounded linear functional that vanishes on $\mathcal{M}_R$, which is a codimension one subspace of $C(\partial A_R).$ Hence, each $S_{ij}$ lies in the one-dimensional annihilator $\text{Ann}[\mathcal{M}_R]\subset \{L: C(\partial A_R)\to \mathbb{C} \text{ linear, bounded}\}$ of $\mathcal{M}_R$. But we also know that the (nonzero) map $$\phi: C(\partial A_R)\to\mathbb{C}$$
$$f\mapsto \frac{1}{(2+c)}\frac{1}{2\pi i}\int_{\partial A_R}\frac{f(\zeta)}{\zeta}d\zeta$$
lies in $\text{Ann}[\mathcal{M}_R]$. Thus, for every $i, j$, there exists $k_{ij}\in\mathbb{C}$  such that $S_{ij}=k_{ij}\phi $. Define the (a priori unbounded) operator $A$ acting on $H$ by $\langle A e_j, e_i\rangle=k_{ij}$, for all $i, j,$ and let $A_J$ denote its compression to $H_J:=\text{span}\{e_j : j\in J\}$, where $J$ is any finite subset of $\mathbb{N}.$  Hence, we obtain 
\begin{equation}\label{defineA}
    \frac{1}{2+c}\bigg[\frac{1}{2\pi i}\int_{\partial A_R}\frac{f(\zeta)}{\zeta}d\zeta\bigg] A_J=P_{H_J} S(f)|_{H_J},
\end{equation}
for every $f\in C(\partial A_R)$ and every $J.$ Set $f=f_0$ in this last equality, where $f_0\equiv 1$ on $C_1$ and $f_0\equiv 0$ on $C_{-1}$. This gives us 
$$A_J=(2+c)P_{H_J}S(f_0)|_{H_J},$$
for all $J.$ Since $S(f_0)=\psi(f_0)-\Psi(f_0)$ is bounded and self-adjoint, we obtain that $A$ is a bounded, self-adjoint operator. Also, in view of (\ref{defineA}), we may deduce that 
$$\Psi_{R, c}(f)=\psi_{R, c}(f)-S(f)=\int_{\partial A_R}\nu_{A, c}(\sigma, T)f(\sigma)ds$$
for every continuous $f$, where 
$$\nu_{A, c}(\sigma, T)=\begin{cases} \mu(\sigma, T)+\frac{1}{2\pi i}\frac{\sigma'}{\sigma}(c-A), \hspace{0.3 cm} \text{ for }\sigma\in C_1, \\
\mu(\sigma, T)-\frac{1}{2\pi i}\frac{\sigma'}{\sigma}(A), \hspace{0.3 cm}\text{ for }\sigma\in  C_{-1}.
\end{cases}$$
Since $\Psi_{R, c}$ is (completely) positive on $C(\partial A_R)$, we easily obtain that \\ $\nu_{A, c}(\sigma, T)\ge 0$ for every $\sigma\in \partial A_R,$ hence (as in the proof of Lemma \ref{onedir}) $T/R\in\mathcal{C}_{2+c-A}$ and $T^{-1}/R\in\mathcal{C}_{A}$. The fact that $\mathcal{C}_{2+c-A}$ and $\mathcal{C}_{A}$ are non-empty immediately implies  $0\le A\le 2+c$ (see the remark after Lemma \ref{CAdescr}). But we also know that $T$ is invertible, so Lemma \ref{notinvertCA} tells us that both $A$ and $2+c-A$ have to be invertible as well. This concludes the proof. \end{proof}

We now drop the assumption $\sigma(T)\subset A_R$.

\begin{theorem}\label{droptheorem}
Assume $ R>1, c>-2$ and $T\in\mathcal{B}(H)$ satisfies $\sigma(T)\subset \overline{A_R}.$ Then, there exists $A\in\mathcal{B}(H)$ such that $A>0, \ 2+c-A>0$ and $T/R\in \mathcal{C}_{2+c-A}$ and $T^{-1}/R\in \mathcal{C}_A $ if and only if $T\in \mathbb{CDLA}_{R'}(c)$ for every $R'>R.$
\end{theorem}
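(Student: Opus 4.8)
The plan is to deduce both implications from the description of $\mathbb{CDLA}_{R'}(c)$ already obtained for operators whose spectrum lies in the \emph{open} annulus, together with the fact that $\sigma(T)\subset\overline{A_R}$ forces $\sigma(T)\subset A_{R'}$ for every $R'>R$, so that Lemma \ref{onedir} and Theorem \ref{2002} are available at each level $R'>R$. For the implication ``$A$ exists $\Rightarrow T\in\mathbb{CDLA}_{R'}(c)$ for every $R'>R$'', fix $R'>R$; as $\sigma(T)\subset\overline{A_R}\subset A_{R'}$, Lemma \ref{onedir} reduces the claim to $T/R'\in\mathcal C_{2+c-A}$ and $T^{-1}/R'\in\mathcal C_A$. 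Writing $T/R'=\lambda(T/R)$ with $\lambda=R/R'\in(0,1)$, I use that (by Lemma \ref{CAchar}) each class $\mathcal C_B$ with $B$ strictly positive is closed under multiplication by scalars $\lambda$, $|\lambda|\le1$: $\sigma(\lambda S)\subset\overline{\mathbb D}$ whenever $\sigma(S)\subset\overline{\mathbb D}$, and $2\Re(1-z\lambda S)^{-1}+B-2\ge0$ for $z\in\mathbb D$ is just the inequality for $S$ at the points $\lambda z\in\mathbb D$. Hence $T/R'\in\mathcal C_{2+c-A}$ and $T^{-1}/R'\in\mathcal C_A$, and Lemma \ref{onedir} gives $T\in\mathbb{CDLA}_{R'}(c)$.

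For the converse, suppose $T\in\mathbb{CDLA}_{R'}(c)$ for every $R'>R$, and for $R'\ge R$ set
\[
\mathcal S_{R'}=\bigl\{B=B^{*}\in\mathcal B(H):\ 2\Re(1-zT/R')^{-1}+(2+c-B)-2\ge0\ \text{and}\ 2\Re(1-wT^{-1}/R')^{-1}+B-2\ge0\ \ \forall z,w\in\mathbb D\bigr\}.
\]
Putting $z=0$, $w=0$ yields $\mathcal S_{R'}\subset\{0\le B\le(2+c)I\}$, so $\mathcal S_{R'}$ is weak-operator compact. By Lemma \ref{CAchar}, for $B$ with $0<B<2+c$ one has $B\in\mathcal S_{R'}\iff T/R'\in\mathcal C_{2+c-B}$ and $T^{-1}/R'\in\mathcal C_B$, so Theorem \ref{2002} (applicable since $\sigma(T)\subset A_{R'}$) shows $\mathcal S_{R'}\neq\varnothing$. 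A short resolvent computation gives $\mathcal S_{R_2}\subset\mathcal S_{R_1}$ whenever $R\le R_2\le R_1$ (for $z\in\mathbb D$, $(1-zT/R_1)^{-1}=(1-(zR_2/R_1)T/R_2)^{-1}$ with $zR_2/R_1\in\mathbb D$, and likewise for the $w$-term), while $\bigcap_{R'>R}\mathcal S_{R'}=\mathcal S_R$: the inclusion $\subseteq$ follows by letting $R'\downarrow R$ and using norm-continuity of $R'\mapsto(1-zT/R')^{-1}$ (valid because $\sigma(T/R),\sigma(T^{-1}/R)\subset\overline{\mathbb D}$), and $\supseteq$ is immediate. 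Thus the nested nonempty weak-operator compact family $\{\mathcal S_{R'}\}_{R'>R}$ has nonempty intersection $\mathcal S_R$; pick $A\in\mathcal S_R$. Since $\sigma(T/R),\sigma(T^{-1}/R)\subset\overline{\mathbb D}$ hold automatically, once $0<A<2+c$ is known, Lemma \ref{CAchar} gives $T/R\in\mathcal C_{2+c-A}$ and $T^{-1}/R\in\mathcal C_A$, finishing the proof.

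The step I expect to be the main obstacle is promoting $0\le A\le(2+c)I$ to the strict $0<A<2+c$. When $\sigma(T)$ does not meet the inner boundary circle $\{|z|=1/R\}$ one has $\sigma(T^{-1}/R)\subset\mathbb D$, and since $T^{-1}/R$ is invertible and satisfies the second inequality, the contrapositive of Lemma \ref{notinvertCA} forces $A$ to be bounded below; symmetrically, if $\sigma(T)$ misses the outer circle, $2+c-A$ is bounded below. The remaining case, $\sigma(T)$ meeting $\partial A_R$, is the delicate one: here I would show that the inequalities defining $\mathcal S_R$, with $z,w$ ranging over the \emph{open} disk, themselves force $A\ge\varepsilon I$ and $2+c-A\ge\varepsilon I$ — for instance, were $A$ not bounded below, unit vectors $h_k$ with $\langle Ah_k,h_k\rangle\to0$ would, via the second inequality and a Herglotz/normal-families argument, satisfy $\langle(1-wT^{-1}/R)^{-1}h_k,h_k\rangle\to1$ locally uniformly in $w$, hence $\langle T^{-n}h_k,h_k\rangle\to0$ for all $n\ge1$, which I would play off against the invertibility of $T$ and the first inequality to reach a contradiction. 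Alternatively, one can arrange $\varepsilon I\le A_{R'}\le(2+c-\varepsilon)I$ with $\varepsilon>0$ independent of $R'$ when applying Theorem \ref{2002}, and then pass to a weak-operator cluster point as $R'\downarrow R$.
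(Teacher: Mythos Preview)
Your argument follows the paper's almost exactly: one direction combines Lemma~\ref{onedir} with the observation that each $\mathcal C_B$ is closed under multiplication by scalars in $\overline{\mathbb D}$; the other produces operators $A_{R'}$ via Theorem~\ref{2002} at each $R'>R$ and extracts a WOT limit as $R'\downarrow R$ (your nested-compact-sets phrasing and the paper's subsequence extraction are equivalent).

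The step you single out as the obstacle is the only place your write-up is incomplete, and the paper resolves it in one line: once the limit $A$ satisfies the two positivity inequalities at level $R$, the invertible operators $T/R$ and $T^{-1}/R$ lie in $\mathcal C_{2+c-A}$ and $\mathcal C_A$ respectively, so Lemma~\ref{notinvertCA} forces both $2+c-A$ and $A$ to be invertible. Your concern that Lemma~\ref{notinvertCA} is stated only for $\sigma(\cdot)\subset\mathbb D$ is dispatched by the very scaling trick you used in the forward implication: for any $r\in(0,1)$ the operator $r(T^{-1}/R)$ is still invertible, still satisfies $2\Re(1-w\,r T^{-1}/R)^{-1}+A-2\ge 0$ for all $w\in\mathbb D$ (this is just the original inequality at $rw\in\mathbb D$), and now has spectrum in $r\overline{\mathbb D}\subset\mathbb D$, so Lemma~\ref{notinvertCA} applies directly to give $A>0$; likewise $2+c-A>0$. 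No Herglotz or normal-families argument is needed.
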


\begin{proof}
First, assume  $T\in \mathbb{CDLA}_{R'}(c)$ for every $R'>R$ and fix a decreasing sequence $\epsilon_k\to 0.$ Put $R_k=R+\epsilon_k$. Since $\sigma(T)\subset A_{R_k},$ Theorem \ref{2002} tells us that there exists
 a sequence $\{A_k\}\subset \mathcal{B}(H)$ such that $0<A_k<2+c$ and $T/R_k\in\mathcal{C}_{2+c-A_k}$ and $T^{-1}/R_k\in\mathcal{C}_{A_k},$ for all $k\ge 1.$ Now, from $T/R_k\in\mathcal{C}_{2+c-A_k}$ we get
\begin{equation}\label{11112}
\Big\langle \big(2\Re(1-zT/R_k)^{-1}+c-A_k\big)v, v\Big\rangle \ge 0,
\end{equation}
for all $z\in\mathbb{D}, v\in H$ and $k\ge 1$. Since $\{A_k\}$ is uniformly bounded, we may replace it, without loss of generality, by a WOT-convergent subsequence. Note also that $\sigma(zT/R_k)\subset \mathbb{D},$ for all $z, k.$ Letting $k\to\infty$ in (\ref{11112}) (while keeping $z$ and $v$ fixed) is then easily seen to imply
$$\Big\langle \big(2\Re(1-zT/R)^{-1}+c-A\big)v, v\Big\rangle \ge 0, $$
where $A$ is the WOT limit of $\{A_k\}$ (notice that $A$ has to be self-adjoint, being the WOT limit of self-adjoint operators). Since this last inequality holds for any $z\in\mathbb{D}$ and $v\in H,$ we conclude that $T/R\in\mathcal{C}_{2+c-A}$, while an entirely analogous argument shows that $T^{-1}/R\in\mathcal{C}_A.$ Finally, the fact that both $\mathcal{C}_{2+c-A}$ and $\mathcal{C}_{A}$ contain an invertible operator implies, as seen previously, that $0<A<2+c$. \par
For the converse, observe that (in view of Lemma \ref{CAdescr}) having $T/R\in \mathcal{C}_{2+c-A}$ and $T^{-1}/R\in \mathcal{C}_A $ implies that $T/{R'}\in \mathcal{C}_{2+c-A}$ and $T^{-1}/{R'}\in \mathcal{C}_A $ for every $R'>R.$ Since $\sigma(T)\subset A_{R'}$, Theorem \ref{2002} allows us to deduce that $T\in \mathbb{CDLA}_{R'}(c)$ for every $R'>R.$
\end{proof}

Now, we record the following analogue of Proposition \ref{DLAstuff} for $\mathbb{CDLA}_R(c)$. The proof is essentially an application of Proposition \ref{DLAstuff} combined with Lemma \ref{onedir}, so we omit the details.

\begin{proposition}\label{CDLAstuff}
Let $c\in\mathbb{R}$, $R>1$ and assume $N\in\mathcal{B}(H)$ is normal.
\begin{itemize}
    \item[(i)] If $c>-2,$ then $\mathbb{CDLA}_R(c)$ will be non-empty if and only if there exists $s\in (0, 2+c)$ such that $\mathcal{C}_{2+c-s, s}(R)$ is non-empty. \\
    In particular, if $c\ge 0,$ then $\mathbb{CDLA}_R(c)$  will be non-empty for every $R>1.$
    \item[(ii)] If $c\le -2,$ then $\mathbb{CDLA}_R(c)$ will be empty for every $R>1.$
    \item[(iii)] $\mathbb{CDLA}_R(c)\subset \mathcal{C}_{2-c, 2-c}(R),$ for every $c>-2. $
    \item[(iv)] If, in addition, we assume $c\ge 0,$ then $N\in \mathbb{CDLA}_R(c)$ is equivalent to $R^{-2}\le N^*N\le R^2.$
\end{itemize}
\end{proposition}

It is also worth noting that, like $\mathbb{DLA}_R(c),$ $\mathbb{CDLA}_R(c),$ is (eventually) strictly monotone with respect to $c.$ This has essentially already been given to us by the proof of Theorem \ref{monot} plus Lemma \ref{onedir}.
\begin{theorem}\label{cmonot}
If $-2<c<c'$ and $\mathbb{CDLA}_R(c')$ is non-empty, then $$\mathbb{CDLA}_R(c)\subsetneq \mathbb{CDLA}_R(c').$$
\end{theorem}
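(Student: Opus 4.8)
The plan is to obtain this as a corollary of the monotonicity theorem for $\mathbb{DLA}_R(c)$ (Theorem \ref{monot}) together with Lemma \ref{onedir}, as the remark preceding the statement already suggests. First I would dispose of the inclusion $\mathbb{CDLA}_R(c)\subseteq\mathbb{CDLA}_R(c')$. If $T\in\mathbb{CDLA}_R(c)$ then by definition $\sigma(T)\subset A_R$, so Theorem \ref{2002} furnishes $A\in\mathcal{B}(H)$ with $A>0$, $2+c-A>0$, $T/R\in\mathcal{C}_{2+c-A}$ and $T^{-1}/R\in\mathcal{C}_A$. Keeping the same $A$, the strict monotonicity of the $\mathcal{C}_\rho$ scale and the inequality $2+c-A<2+c'-A$ give $T/R\in\mathcal{C}_{2+c'-A}$, while $A>0$ and $2+c'-A>2+c-A>0$ still hold. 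Lemma \ref{onedir} then yields $T\in\mathbb{CDLA}_R(c')$.

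For strictness, I would recycle the construction from the proof of Theorem \ref{monot}. Since $\mathbb{CDLA}_R(c')\subseteq\mathbb{DLA}_R(c')$, the hypothesis that $\mathbb{CDLA}_R(c')$ is non-empty forces $\mathbb{DLA}_R(c')$ to be non-empty, so that proof applies and produces an operator $T$ — either the $2\times 2$ upper-triangular matrix with equal diagonal entries $\bigl(\begin{smallmatrix} a & b \\ 0 & a\end{smallmatrix}\bigr)$ of its first case, or the positive scalar $a=R\delta/(2-\delta)$ of its second case — with $T\in\mathbb{DLA}_R(c')$ but $T\notin\mathbb{DLA}_R(c)$. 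I would then check two things about this $T$. First, in both cases $\sigma(T)=\{a\}$ with $1/R<a<R$ strictly (these are exactly the strict inequalities recorded in that proof), so $\sigma(T)\subset A_R$ and it is legitimate to ask whether $T\in\mathbb{CDLA}_R(c')$. Second, membership $T\in\mathbb{DLA}_R(c')$ was obtained there via Lemma \ref{CA, C(2-A)} from relations $T/R\in\mathcal{C}_{2+c'-A}$ and $T^{-1}/R\in\mathcal{C}_A$ for a suitable positive scalar $A$ with $0<A<2+c'$ (one takes $A=t$ in the first case and $A=2+c'-\delta$ in the second), which is precisely the hypothesis of Lemma \ref{onedir}; hence $T\in\mathbb{CDLA}_R(c')$. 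Finally, $\mathbb{CDLA}_R(c)\subseteq\mathbb{DLA}_R(c)$ because complete contractivity implies contractivity, so $T\notin\mathbb{DLA}_R(c)$ gives $T\notin\mathbb{CDLA}_R(c)$, and the inclusion is strict.

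The hard part here is essentially nonexistent: the statement is a formal consequence of Theorem \ref{monot} and Lemma \ref{onedir}. The only genuine care required is bookkeeping — verifying that the separating operators manufactured in Theorem \ref{monot} have spectrum in the \emph{open} annulus $A_R$ (so that the $\mathbb{CDLA}_R$-membership question is even well posed for them, since $\mathbb{CDLA}_R(\cdot)$ is defined only under $\sigma(T)\subset A_R$), and correctly identifying, in each of the two cases of that proof, which scalar plays the role of the operator $A$ in Lemma \ref{onedir}.
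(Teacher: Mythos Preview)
Your proposal is correct and follows essentially the same approach as the paper: recycle the separating operators from Theorem~\ref{monot}, upgrade their $\mathbb{DLA}_R(c')$-membership to $\mathbb{CDLA}_R(c')$-membership via Lemma~\ref{onedir} (since in each case membership came from some $\mathcal{C}_{s,t}(R)$ with $s+t=2+c'$), and conclude $T\notin\mathbb{CDLA}_R(c)$ from $T\notin\mathbb{DLA}_R(c)$.

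The only place you diverge from the paper is in the easy inclusion $\mathbb{CDLA}_R(c)\subseteq\mathbb{CDLA}_R(c')$. The paper calls this ``obvious by the definition'': one simply writes
\[
S_{R,c'}=\frac{2+c}{2+c'}\,S_{R,c}+\frac{c'-c}{2+c'}\,(f\mapsto a_0 I),
\]
a convex combination of completely contractive maps. Your route through Theorem~\ref{2002} and Lemma~\ref{onedir} also works, but note that when you invoke ``strict monotonicity of the $\mathcal{C}_\rho$ scale'' you are really using monotonicity of the operator-indexed classes $\mathcal{C}_A$ in the parameter $A$, which is immediate from the characterization in Lemma~\ref{CAchar} rather than from the scalar $\mathcal{C}_\rho$ theory.
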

\begin{proof} If $c<c'$, it is obvious by the definition of $\mathbb{CDLA}_R(c)$ that $\mathbb{CDLA}_R(c)\subseteq\mathbb{CDLA}_R(c').$ To show that the inclusion is actually strict, we  divide the proof into two cases, like with Theorem \ref{monot}. \par 
First, assume that there exist $s, t>0$ such that $s\ge 1, s+t=2+c'$ and $\mathcal{C}_{s. t}(R)$ is non-empty. As in the proof of Theorem \ref{monot}, we may assume that $t<1$ and $\frac{2}{R^2+1}<t$. In this setting, we were able to construct $T\in \mathcal{C}_{s, t}(R)\subset \mathbb{CDLA}_R(c')$ such that $T\notin \mathbb{DLA}_R(c),$ hence also 
$T\notin \mathbb{CDLA}_R(c).$ We thus obtain strict inclusion.\par 
 On the other hand, assume that there do not exist $s, t>0$ such that $s\ge 1, s+t=2+c'$ and $\mathcal{C}_{s. t}(R)$ is non-empty. In this setting, we found $\delta>0$ and $a\in\mathbb{C}$ such that $a\in\mathcal{C}_{\delta, 2+c'-\delta}(R)\subset \mathbb{CDLA}_R(c')$, but $a\notin \mathbb{CDLA}_R(c),$ as desired.
\end{proof}
We end with a question. While the inclusion $ \mathbb{CDLA}_R(c)\subseteq  \mathbb{DLA}_R(c)$ is obvious, we have not been able to determine whether it is actually strict or not.
\begin{question}\label{quest}
Let $R>1, c>-2$ and assume $\mathbb{CDLA}_R(c)$ is non-empty. Is it true that 
$$\mathbb{CDLA}_R(c)\subsetneq \mathbb{DLA}_R(c) ?$$
\end{question}
A negative answer to the above question would imply that, given $\sigma(T)\subset A_R$, having 
 $$2\Re\big[(1-zT/R)^{-1}+(1-wT^{-1}/R)^{-1}\big]-2+c\ge 0, \hspace{0.4 cm} \forall z, w\in\mathbb{D},$$
 is equivalent to the existence of $0<A<2+c$ such that
 $$2\Re(1-zT/R)^{-1}+c-A\ge 0, \hspace{0.4 cm}\forall z\in\mathbb{D},$$
 and 
$$2\Re(1-wT^{-1}/R)^{-1}+A-2\ge 0, \hspace{0.4 cm}\forall w\in\mathbb{D}.$$
While this seems unlikely to hold, the computational difficulty in verifying membership conditions of the form $T/R\in\mathcal{C}_{2+c-A}(R)$ 
and $T^{-1}/R\in\mathcal{C}_{A}(R)$, for arbitrary $0<A<2+c$ and $T$ non-normal, does not make it easy to come up with a counterexample.

\small

\section{$K$-SPECTRAL ESTIMATES}\label{kspectral}
 \large 
\subsection{General estimates} Recall that, given a compact set $X\subset\mathbb{C}$ and $T\in\mathcal{B}(H)$ such that $\sigma(T)\subset X$, $X$ is said to be a $K$-spectral set for $T$ if the inequality 
$$||f(T)||\le K \sup_{z\in X}|f(z)|$$
holds for every rational
function $f$ with poles off of $X$. $X$ will be called a complete $K$-spectral set if the above inequality holds for all matrices with rational coefficients. \par 
In this short subsection, we show how the methods established in \cite{LiCaldwellGreen} and \cite{CrouzGreen}  can be used to derive $K$-spectral estimates for $\mathbb{DLA}_R(c)$ and $\mathbb{CDLA}_R(c)$. We shall need a few  preliminary lemmata, the scalar-valued versions of which are all contained in \cite{CrouzGreen}.
\begin{lemma} \label{ccCf}
 The map 
 $$\alpha: \mathcal{A}(A_R)\to \mathcal{A}(A_R)  $$
 $$f\mapsto C\overline{f}$$
is completely contractive, for every $R>1.$
\end{lemma}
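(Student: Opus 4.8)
The plan is to make the action of $\alpha$ completely explicit on Laurent monomials, to recognize the resulting Fourier multiplier as a Poisson kernel, and thereby to exhibit $\alpha$ as a convex average of norm-preserving maps of $\overline{A_R}$; complete contractivity will then be immediate.

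First I would record that $\alpha(z^n)=C\overline{z^n}=R^{-2|n|}z^{-n}$ for every $n\in\mathbb{Z}$ — exactly the computation carried out just before \eqref{naivedil} — and, using that Laurent polynomials are dense in $\mathcal{A}(A_R)$, conclude that $\alpha(f)(w)=\sum_{n\in\mathbb{Z}}\overline{a_n}\,R^{-2|n|}w^{-n}$ for $f=\sum_n a_nz^n$, the series converging uniformly on $\overline{A_R}$. The point is that $R^{-2|n|}$ is the $n$-th Fourier coefficient of the Poisson kernel
$$P_{R^{-2}}(\theta)=\sum_{n\in\mathbb{Z}}R^{-2|n|}e^{in\theta}=\frac{1-R^{-4}}{1-2R^{-2}\cos\theta+R^{-4}}\ \ge\ 0,$$
which becomes a probability density after division by $2\pi$. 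Substituting this into the series for $\alpha(f)$ and interchanging sum and integral (legitimate by uniform convergence) gives the identity
$$\alpha(f)(w)=\frac{1}{2\pi}\int_0^{2\pi}\overline{f\!\left(\frac{e^{i\theta}}{\overline{w}}\right)}\,P_{R^{-2}}(\theta)\,d\theta,\qquad w\in\overline{A_R};$$
here one uses that $|e^{i\theta}/\overline{w}|=1/|w|\in[1/R,R]$, so $w\mapsto e^{i\theta}/\overline{w}$ is a bijection of $\overline{A_R}$ onto itself.

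With this identity available, for each fixed $\theta$ I would introduce the anti-linear map $\Lambda_\theta\colon\mathcal{A}(A_R)\to\mathcal{A}(A_R)$, $(\Lambda_\theta f)(w)=\overline{f(e^{i\theta}/\overline{w})}$; it lands in $\mathcal{A}(A_R)$ because $w\mapsto e^{i\theta}/\overline{w}$ is antiholomorphic (so the conjugate is holomorphic in $w$) and continuous up to the boundary. Each $\Lambda_\theta$ is completely isometric: for $F=[f_{ij}]\in M_m(\mathcal{A}(A_R))$ and $w\in\overline{A_R}$ one has $\|\Lambda_\theta^{(m)}(F)(w)\|=\|\,\overline{[f_{ij}(e^{i\theta}/\overline{w})]}\,\|=\|[f_{ij}(e^{i\theta}/\overline{w})]\|$, which is at most $\|F\|$ and, since $w\mapsto e^{i\theta}/\overline{w}$ is onto $\overline{A_R}$, also at least $\|F\|$. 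Since the displayed identity says $\alpha=\frac{1}{2\pi}\int_0^{2\pi}\Lambda_\theta\,P_{R^{-2}}(\theta)\,d\theta$ is a convex average of these maps, for every $m$, every $F\in M_m(\mathcal{A}(A_R))$ and every $w$ we get $\|\alpha^{(m)}(F)(w)\|\le\frac{1}{2\pi}\int_0^{2\pi}\|\Lambda_\theta^{(m)}(F)(w)\|\,P_{R^{-2}}(\theta)\,d\theta\le\|F\|$, and taking the supremum over $w$ yields $\|\alpha\|_{cb}\le1$.

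The only step that requires genuine thought is the Poisson-kernel identity for $\alpha$; once the multiplier $R^{-2|n|}$ is recognized as the Poisson kernel $P_{R^{-2}}$, the map is literally a probability average of (anti-linear) composition operators with norm-preserving self-maps of $\overline{A_R}$, and nothing specific to matrices — as opposed to scalars — is needed. Equivalently, one could factor $\alpha=K\circ J\circ D_R$ with $Kf=\overline{f(\overline{\,\cdot\,})}$ and $Jf=f(1/\,\cdot\,)$ completely isometric and $D_R\colon\sum c_nz^n\mapsto\sum_n R^{-2|n|}c_nz^n$ the Poisson averaging operator, but the one-step formulation is cleaner.
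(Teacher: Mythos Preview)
Your proof is correct and takes a genuinely different route from the paper's. The paper argues in two steps: it first cites \cite[Lemma 8]{CrouzGreen} for the scalar contractivity $\|\alpha(f)\|\le\|f\|$, and then invokes \cite[Lemma 2.1]{bivariate} to upgrade contractivity to complete contractivity (essentially using that $\mathcal{A}(A_R)$, as a subspace of a commutative $C^*$-algebra, carries the minimal operator space structure). Your argument, by contrast, is self-contained: after recognizing the multiplier $R^{-2|n|}$ as the Poisson kernel $P_{R^{-2}}$, you express $\alpha$ as a probability average of the anti-linear composition operators $\Lambda_\theta$, each of which is completely isometric because $w\mapsto e^{i\theta}/\overline{w}$ is a bijection of $\overline{A_R}$ and entrywise conjugation preserves matrix norms. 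This yields complete contractivity directly, without passing through the scalar case or appealing to abstract operator-space facts. One small point worth tightening: the interchange of sum and integral is immediate for $w$ in the open annulus (where the Laurent series of $f$ converges uniformly on the circle $|z|=1/|w|$), and then extends to $\overline{A_R}$ by continuity of both sides in $w$; alternatively, one can simply verify the integral identity on Laurent polynomials and pass to the closure, since both $\alpha$ and the integral define continuous maps on $\mathcal{A}(A_R)$. Your approach has the advantage of being explicit and not relying on external references, while the paper's is shorter given those references.
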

 \begin{proof}
By \cite[Lemma 8]{CrouzGreen},  we know that $||\alpha(f)||\le ||f||$ whenever $f$ is a scalar-valued rational function in $A_R$ that is bounded by $1.$ A standard approximation argument  shows that $\alpha$ must be contractive. The proof of \cite[Lemma 2.1]{bivariate} then implies that $\alpha$ is completely contractive.
 \end{proof}

\begin{lemma}\label{RansSchw}
  Let $R>1$ and $T\in\mathcal{B}(H)$ be such that $\sigma(T)\subset A_R.$  Assume that there exists a bounded linear functional $\gamma: \mathcal{A}(A_R)\to\mathbb{C}$ and a constant $p>0$ such that the mapping 
  $$f\mapsto \frac{1}{2p}\big(f(T)+\alpha(f)(T)^*+\gamma(f)\big) $$
  is completely contractive on $\mathcal{A}(A_R)$. Then, $\overline{A_R}$ is a complete $K$-spectral set for $T$ with constant 
  $K=p+\sqrt{1+p^2+||\gamma||_{\text{cb}}}$.
\end{lemma}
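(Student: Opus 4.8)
The plan is to follow the abstract functional analysis approach of Crouzeix--Palencia, as refined by Ransford--Schwenninger and Schwenninger--de Vries, and imported here for the annulus. Write $\Phi = \frac{1}{2p}(f(T) + \alpha(f)(T)^* + \gamma(f))$ for the given completely contractive map, where $\alpha(f) = C\overline{f}$. The key idea is to split $f(T)$ as a combination of $\Phi(f)$ and $\alpha(f)(T)^*$, control the latter using Lemma \ref{ccCf} together with an adjoint symmetry, and then optimize a free parameter. First I would record the two basic facts I will use: (a) by Lemma \ref{ccCf}, $\alpha$ is completely contractive on $\mathcal{A}(A_R)$, so $\|\alpha(f)(T)\| \le \|\alpha\|_{\mathrm{cb}} \sup_{\overline{A_R}}|f| \le \sup_{\overline{A_R}}|f|$ (and likewise at the matrix level, giving a complete bound); (b) taking adjoints, the map $f \mapsto \alpha(f)(T)^*$ and the functional $\gamma$ interact so that $f(T) = 2p\,\Phi(f) - \alpha(f)(T)^* - \gamma(f) I$.

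The core of the argument is the standard trick of iterating / substituting $f$ by a composition with a conformal-type self-map, or more directly, the Ransford--Schwenninger lemma \cite[Lemma 1.1]{RansfordSchwenninger}: one considers, for $f$ with $\sup_{\overline{A_R}}|f| \le 1$, the quantity $\|f(T)\|$ and bounds it by writing $f(T) = 2p\,\Phi(f) - \alpha(f)(T)^* - \gamma(f)I$. Since $\Phi$ is completely contractive, $\|\Phi(f)\| \le 1$; since $\alpha$ is completely contractive and $\|f\|_\infty \le 1$, $\|\alpha(f)(T)^*\| = \|\alpha(f)(T)\| \le 1$; and $|\gamma(f)| \le \|\gamma\|$. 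This gives the crude bound $\|f(T)\| \le 2p + 1 + \|\gamma\|$. To improve the constant to $p + \sqrt{1+p^2+\|\gamma\|_{\mathrm{cb}}}$, I would apply this inequality not to $f$ itself but to $f^n$ (or, following \cite{schwendevries}, to a suitable matrix-valued ``amplification'' that is where the $\mathrm{cb}$-version of $\|\gamma\|$ enters), extract the $n$-th root, and let $n \to \infty$; alternatively, use the slicker argument where one applies the estimate to $\frac{f - \lambda}{1 - \bar\lambda f}$-type combinations and optimizes. The cleanest route is: show that for every $f$ with $\|f\|_\infty \le 1$ one has the operator inequality leading to a quadratic in $\|f(T)\|$, namely $\|f(T)\|^2 - 2p\|f(T)\| - (1 + \|\gamma\|_{\mathrm{cb}}) \le 0$, whose positive root is exactly $p + \sqrt{1+p^2+\|\gamma\|_{\mathrm{cb}}}$. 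The complete $K$-spectral conclusion follows by running the same computation with matrix-valued $f$, using that $\Phi$ and $\alpha$ are completely contractive and that the relevant norm of $\gamma$ at the matrix level is $\|\gamma\|_{\mathrm{cb}}$.

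The main obstacle is getting the quadratic inequality with the sharp constant rather than the crude linear bound $2p+1+\|\gamma\|$; this is precisely the content of the Crouzeix--Palencia/Ransford--Schwenninger maneuver and requires the von Neumann-type argument applied to powers of $f$ (or to $f$ composed with itself on the algebra level), which is why the hypothesis must be \emph{complete} contractivity and why $\|\gamma\|_{\mathrm{cb}}$ (not just $\|\gamma\|$) appears under the square root. Concretely, I would invoke \cite[Lemma 1.1]{RansfordSchwenninger} or \cite[Theorem 2]{CrouzGreen} directly: those lemmas are stated for exactly this configuration (a completely contractive map of the form $\frac{1}{2p}(f(T) + \alpha(f)(T)^* + \gamma(f))$ with $\alpha$ completely contractive) and yield the constant $p + \sqrt{1+p^2+\|\gamma\|_{\mathrm{cb}}}$, so the proof reduces to checking that our hypotheses match theirs — in particular verifying, via Lemma \ref{ccCf}, that $\alpha$ is completely contractive on $\mathcal{A}(A_R)$, which we have, and noting that $\sigma(T) \subset A_R$ makes all the holomorphic functional calculus expressions well-defined on $\overline{A_R}$.
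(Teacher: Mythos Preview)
Your final strategy---invoke \cite[Theorem 2]{CrouzGreen} (or \cite[Lemma 1.1]{RansfordSchwenninger}) with $c_1=1$, $c_2=p$, and $\hat\gamma=\|\gamma\|_{\text{cb}}$, using Lemma \ref{ccCf} to justify $c_1=1$, and noting that the argument carries over to matrix coefficients---is exactly what the paper does.

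One caution about your exploratory sketch: the ``crude bound'' step where you write $\|\alpha(f)(T)^*\|=\|\alpha(f)(T)\|\le 1$ is circular. Complete contractivity of $\alpha$ only gives $\|\alpha(f)\|_\infty\le\|f\|_\infty$; passing from $\|\alpha(f)\|_\infty\le 1$ to $\|\alpha(f)(T)\|\le 1$ would already require $\overline{A_R}$ to be a $1$-spectral set for $T$, which is stronger than what you are proving. The actual Crouzeix--Palencia/Ransford--Schwenninger mechanism never bounds $f(T)$ and $\alpha(f)(T)^*$ separately; it exploits the combined structure of $f(T)+\alpha(f)(T)^*$ (a bound on the sum, plus the contractivity of $\alpha$ as a map into $\mathcal{A}(A_R)$) to produce the quadratic inequality directly. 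This does not affect your conclusion, since you correctly defer to the cited theorem, but the heuristic you wrote for the $2p+1+\|\gamma\|$ bound does not stand on its own.
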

\begin{proof}
In the setting of \cite[Theorem 2]{CrouzGreen}, replace $\Omega$ by $A_R,$ $c_1$ by $1$ (this is possible because of Lemma \ref{ccCf}), $c_2$ by $p$ and $\hat{\gamma}$ by $||\gamma||_{\text{cb}}$. While the proof of Theorem 2 in \cite{CrouzGreen} was given in the scalar-valued setting, it can be repeated, mutatis mutandis, in the matrix-valued setting to give the exact same $K$-spectral estimate (see also Remark (i) after the proof of Theorem 1.1 in \cite{RansfordOster}), where 
$$K=c_2+\sqrt{c_2^2+c_1+\hat{\gamma}}=p+\sqrt{1+p^2+||\gamma||_{\text{cb}}}.$$\end{proof}
We are now in a position to show:
\begin{theorem}\label{generalspectral}
  Let $c> -2$ and assume $T\in \mathbb{DLA}_{R}(c)$ (resp. $T\in \mathbb{CDLA}_{R}(c)$). Then, $\overline{A_R}$ will be a $K$-spectral (resp. complete $K$-spectral) set for $T$, where 
$$K=1+\frac{c}{2}+\sqrt{\Big(1+\frac{c}{2}\Big)^2+1+|c|}.$$
\end{theorem}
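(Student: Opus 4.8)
The plan is to feed the defining complete contractivity of $\mathbb{CDLA}_R(c)$ straight into Lemma \ref{RansSchw}. Recall from the discussion of the double-layer potential kernel in Section \ref{basic} that $\int_{\partial A_R}\mu(\sigma,T)f(\sigma)\,ds = f(T) + (C\overline f)(T)^* = f(T) + \alpha(f)(T)^*$, so that for $T$ with $\sigma(T)\subset A_R$ the map $S_{R,c}$ takes the form
$$S_{R,c}(f) = \frac{1}{2+c}\bigl(f(T) + \alpha(f)(T)^* + c\,a_0\bigr),\qquad f = \sum_{n\in\Z}a_n z^n\in\mathcal{A}(A_R).$$
First I would set $p := 1 + \tfrac{c}{2} = \tfrac{2+c}{2}$, which is strictly positive precisely because $c>-2$, and introduce the bounded linear functional $\gamma\colon\mathcal{A}(A_R)\to\C$, $\gamma(f) := c\,a_0$, where $a_0 = \frac{1}{2\pi i}\int_{|\zeta|=1}\frac{f(\zeta)}{\zeta}\,d\zeta$ is the zeroth Laurent coefficient of $f$ (the circle $|\zeta|=1$ lies inside $A_R$ since $R>1$, so this makes sense and depends only on $f$). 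With these choices, $S_{R,c}$ is exactly the map $f\mapsto\frac{1}{2p}\bigl(f(T)+\alpha(f)(T)^*+\gamma(f)\bigr)$ appearing in Lemma \ref{RansSchw}, and the hypothesis $T\in\mathbb{CDLA}_R(c)$ asserts precisely that this map is completely contractive on $\mathcal{A}(A_R)$ (note that $\sigma(T)\subset A_R$ is built into membership in $\mathbb{CDLA}_R(c)$, so the standing hypothesis of Lemma \ref{RansSchw} is satisfied).

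Next I would compute $\|\gamma\|_{\mathrm{cb}}$. Since $\gamma$ is scalar-valued, a bounded linear functional on the operator algebra $\mathcal{A}(A_R)$ is automatically completely bounded with $\|\gamma\|_{\mathrm{cb}}=\|\gamma\|$ (see \cite{Paulsenbook}). Evaluating the Cauchy-type integral for $a_0$ over the unit circle gives $|a_0|\le\sup_{|\zeta|=1}|f(\zeta)|\le\sup_{z\in\overline{A_R}}|f(z)|$, whence $\|\gamma\|\le|c|$ (in fact $\|\gamma\|=|c|$, as one sees by taking $f\equiv 1$).

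Finally, Lemma \ref{RansSchw} applies and yields that $\overline{A_R}$ is a complete $K$-spectral set for $T$ with
$$K = p + \sqrt{1 + p^2 + \|\gamma\|_{\mathrm{cb}}} = \Bigl(1+\tfrac{c}{2}\Bigr) + \sqrt{\Bigl(1+\tfrac{c}{2}\Bigr)^2 + 1 + |c|},$$
which is the asserted bound. I do not expect any genuine obstacle here: the analytic content of the statement is already carried entirely by Lemma \ref{ccCf} (complete contractivity of $\alpha$) and Lemma \ref{RansSchw} (the abstract $K$-spectral extraction), and what remains is the bookkeeping above, the only points needing a word of care being the positivity $p>0$ (immediate from $c>-2$) and the identity $\|\gamma\|_{\mathrm{cb}}=\|\gamma\|$ for scalar-valued functionals.
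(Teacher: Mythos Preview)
Your proposal is correct and follows essentially the same approach as the paper: set $p=1+c/2$, $\gamma(f)=c\,a_0$, observe that $S_{R,c}$ is precisely the map in Lemma~\ref{RansSchw}, compute $\|\gamma\|_{\mathrm{cb}}=|c|$, and apply the lemma. Your write-up is in fact slightly more detailed than the paper's (you justify $\|\gamma\|_{\mathrm{cb}}=\|\gamma\|$ and the bound $|a_0|\le\|f\|_\infty$ explicitly), but the route is identical.
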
 
\begin{proof} 
Let $T\in \mathbb{CDLA}_{R}(c)$ (the case $T\in \mathbb{DLA}_{R}(c)$ is essentially contained in \cite[Theorem 2]{CrouzGreen}). By assumption, the mapping 
$$f\mapsto \frac{1}{2+c}\big(f(T)+\alpha(f)(T)^*+ca_0\big)$$
is completely contractive on $\mathcal{A}(A_R)$. If $\gamma: \mathcal{A}(A_R)\to\mathbb{C}$ is given by $\gamma(\sum a_n z^n)=ca_0,$ it can be easily verified that $||\gamma||_{\text{cb}}=|c|$. Thus, one can apply Lemma \ref{RansSchw} with $p=1+c/2$ and $\gamma(f)=ca_0$ to deduce the desired result.  
\end{proof}

\begin{remark}\label{improvenum}
Let $\mathbb{NA}_R$ denote the \textit{numerical annulus}, i.e. the class of all $T\in\mathcal{B}(H)$ such that $w(T)\le R$ and $w(T^{-1})\le R.$ $K$-spectral estimates for $\mathbb{NA}_R$ have been studied in \cite{Annuluskspectral} and, more recently, in \cite{CrouzGreen}. Since $$\mathbb{NA}_R\equiv \mathcal{C}_{2, 2}(R)\subset\mathbb{DLA}_R(2),$$ Theorem \ref{generalspectral} tells us that $\overline{A_R}$ is a $(2+\sqrt{7})$-spectral set for $T$ whenever $T\in \mathbb{NA}_R$. This improves on the estimates from \cite[Section 6]{CrouzGreen}. 
\end{remark}
\begin{remark} \label{devriesremark} The main result of \cite{schwendevries} can be used to obtain sharper spectral estimates in certain cases. Indeed, let $T\in \mathbb{DLA}_{R}(c)$ be a matrix with $\sigma(T)\subset A_R$.  In the setting of \cite[Theorem 5]{schwendevries}, choose $A=\mathcal{A}(A_R)$ and set $\gamma(f)=f(T)$ and $\Phi(f)=C\overline{f}.$ Since $c\ge 0$, we have $\mathbb{QA}_R\subset\mathbb{DLA}_R(c)$, which implies that $||\gamma||\ge 2$ (see \cite{Tsiknote}). Now, assume, in addition, the existence of an \textit{extremal pair}  $(f_0, x_0)\in\mathcal{A}(A_R)\times H$ for $\gamma$ (see \cite[p. 2]{schwendevries}).   We have $||f_0||=||x_0||=1$ and $||\gamma||=||\gamma(f_0)x_0||$. Also, there exists an \textit{extremal measure} associated with $(f_0, x_0)$ (see 
\cite[Proposition 3]{schwendevries} and the discussion afterwards). This observation, combined with the contractivity of $\Phi$ (Lemma \ref{ccCf}), allows us to deduce that $|\langle \gamma(\Phi(f_0)f_0)x_0, x_0\rangle|\le 1$, see e.g. the proof of \cite[Theorem 11]{schwendevries}). Thus, if we define $\omega$ in the dual of $\mathcal{A}(A_R)$ as $\omega(\sum_n a_n z^n)=-ca_0$, \cite[Theorem 5]{schwendevries} implies that 
$$||\gamma||\le \frac{1}{2}||\gamma_{\Phi}-\omega||+\sqrt{\bigg(\frac{1}{2}||\gamma_{\Phi}-\omega||\bigg)^2+|\langle \gamma(\Phi(f_0)f_0)x_0, x_0\rangle|} $$
$$\le \frac{2+c}{2} ||S_{R, c}||+\sqrt{\bigg(\frac{2+c}{2} ||S_{R, c}||\bigg)^2+1} $$
$$\le 1+\frac{c}{2}+\sqrt{\bigg(1+\frac{c}{2}\bigg)^2+1},$$
which gives us the sharper constant $K'=1+\frac{c}{2}+\sqrt{\big(1+\frac{c}{2}\big)^2+1}.$ Note that if $T$ has distinct eigenvalues, then the existence of an extremal function $f_0$ can be obtained as in the proof of \cite[Theorem 2.1]{Boundsforanalytical} (see \cite[Section 3]{Annulusquotients} for the structure of solutions to extremal Pick problems over the annulus).
\end{remark}

\subsection{$2\times 2$ matrices} 
We now show Theorem \ref{2x2} from the introduction, which offers improved $K$-spectral estimates for $2\times 2$ matrices with a single eigenvalue. The key ingredient of the proof will be a function-theoretic result from \cite{Mccshen}. To more easily connect with the setting of that paper,  we will work
with the annulus,
$$\mathscr{A}_q:=\{q<|z|<1\},$$
which is conformally equivalent to $A_{1/\sqrt{q}}.$
The definition of $\mathbb{DLA}_R(c)$ can then be updated as follows:
\begin{definition}\label{newdef}
 Let $c\in\mathbb{R}$ and $0<q<1$. $\mathscr{DLA}_q(c)$ denotes the class of all operators $T\in\mathcal{B}(H)$ such that 
\begin{itemize}
    \item[(i)] $\sigma(T)\subset\overline{\mathscr{A}_q}$ and 
    \item[(ii)] $2\Re\big[(1-zT)^{-1}+(1-wqT^{-1})^{-1}\big]-2+c\ge 0, \hspace{0.4 cm} \forall z, w\in\mathbb{D}.$
\end{itemize}   
\end{definition}
Now, for $w\in\mathbb{D}$ and $a\in \mathscr{A}_q$, define $\psi_w(z)=\frac{z-w}{1-\overline{w}z}$ and 
$$\mathcal{F}_{a, w}=\{f:\overline{\mathscr{A}_q}\to\mathbb{D}\text{ }|\text{ } f \text{ analytic } \text{ and }  f(a)=w\}.$$

\begin{lemma}\label{basicestimate}
Let  $w\in\mathbb{D}$ and $a\in \mathscr{A}_q$. Then, 
$$\sup\{|f'(a)|\text{ }|\text{ }f\in\mathcal{F}_{a, w}\}\le (1-|w|^2)\bigg(\frac{1}{1-|a|^2}+\frac{q}{|a|^2-q^2} \bigg).$$
\end{lemma}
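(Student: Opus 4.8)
The plan is to dispose of the parameter $w$ by a Möbius change of variable in the target, and then to read off the derivative bound directly from the Cauchy integral formula on the annulus; no deep function theory is actually required for this particular estimate.

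First I would note that if $f\in\mathcal{F}_{a,w}$, then $g:=\psi_w\circ f$ is again analytic from $\overline{\mathscr{A}_q}$ into $\overline{\mathbb{D}}$, with $g(a)=\psi_w(w)=0$. Since $\psi_w$ is a disk automorphism and a short computation gives $\psi_w'(w)=\tfrac{1}{1-|w|^2}$, the chain rule yields $g'(a)=\psi_w'(w)\,f'(a)$, so that $|f'(a)|=(1-|w|^2)\,|g'(a)|$. Hence it suffices to prove $|g'(a)|\le \tfrac{1}{1-|a|^2}+\tfrac{q}{|a|^2-q^2}$ for every analytic $g:\overline{\mathscr{A}_q}\to\overline{\mathbb{D}}$; in fact the normalization $g(a)=0$ will play no further role.

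Next, differentiating the Cauchy integral formula for the doubly connected domain $\mathscr{A}_q$ (its boundary oriented so that $\mathscr{A}_q$ lies to the left) gives
$$g'(a)=\frac{1}{2\pi i}\int_{\partial\mathscr{A}_q}\frac{g(\zeta)}{(\zeta-a)^2}\,d\zeta,$$
and therefore $|g'(a)|\le \tfrac{1}{2\pi}\int_{\partial\mathscr{A}_q}\tfrac{|d\zeta|}{|\zeta-a|^2}$, using $|g|\le 1$ on $\partial\mathscr{A}_q$. Splitting $\partial\mathscr{A}_q$ into the outer circle $|\zeta|=1$ and the inner circle $|\zeta|=q$ and invoking the elementary Poisson-kernel identity $\tfrac{1}{2\pi}\int_0^{2\pi}\tfrac{d\theta}{|\rho e^{i\theta}-a|^2}=\tfrac{1}{|\,|a|^2-\rho^2\,|}$ (valid for $\rho\ne|a|$), the outer circle contributes $\tfrac{1}{1-|a|^2}$ (here $|a|<1$) while the inner circle, on which $|d\zeta|=q\,d\theta$ and $|a|>q$, contributes $\tfrac{q}{|a|^2-q^2}$. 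Adding these and taking the supremum over $f$ gives the claim.

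I do not expect a genuine obstacle here. The only points deserving a word of care are the justification of differentiating under the Cauchy integral — immediate if ``analytic'' is read as analytic on a neighbourhood of $\overline{\mathscr{A}_q}$, and otherwise handled by exhausting $\mathscr{A}_q$ by slightly smaller concentric annuli together with the fact that $\sup_{\overline{\mathscr{A}_q}}|f|<1$ — and keeping track of the orientation of $\partial\mathscr{A}_q$, which, after passing to absolute values, simply makes the two circles contribute additively. The mildly pleasant point is that the total boundary integral of this Cauchy-type kernel is exactly the right-hand side of the asserted inequality.
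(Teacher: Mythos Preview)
Your proof is correct, and it takes a genuinely different route from the paper's. Both arguments begin with the same M\"obius reduction via $\psi_w$ to extract the factor $1-|w|^2$, but they diverge on how to bound $|g'(a)|$ for $g:\overline{\mathscr{A}_q}\to\mathbb{D}$. The paper invokes the McCullough--Shen extremal result, which identifies the exact supremum as the diagonal reproducing kernel $k^q(a,a)=\sum_{n\in\mathbb{Z}}\frac{|a|^{2n}}{1+q^{2n+1}}$, and then dominates this series term-by-term by two geometric series summing to $\frac{1}{1-|a|^2}+\frac{q}{|a|^2-q^2}$. Your argument instead applies the differentiated Cauchy formula on $\partial\mathscr{A}_q$ and evaluates the resulting boundary integral via the Poisson identity $\frac{1}{2\pi}\int_0^{2\pi}\frac{d\theta}{|\rho e^{i\theta}-a|^2}=\frac{1}{\big|\,|a|^2-\rho^2\,\big|}$, arriving at the same expression directly. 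The upshot: your approach is fully self-contained and avoids importing the extremal theorem from \cite{Mccshen}, while the paper's route has the incidental virtue of recording the sharp value of the supremum before relaxing it. As you note, your Cauchy estimate does not even require $g(a)=0$, so the M\"obius step is used only to gain the $(1-|w|^2)$ improvement; the justification via exhaustion by slightly smaller annuli (using that $f$ maps $\overline{\mathscr{A}_q}$ into the open disk, hence $\sup|f|<1$) is the right way to handle the boundary regularity point.
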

\begin{proof}
Put $k^q(a, a)=\sum_{n\in\mathbb{Z}}\frac{|a|^{2n}}{1+q^{2n+1}}.$ The solution to the above extremal problem for $w=0$ can be found in \cite[p. 1119]{Mccshen}. In particular, it is known that
$$\sup\{|f'(a)|\text{ }|\text{ }f\in\mathcal{F}_{a, 0}\}=k^q(a, a).$$
Now, if $h\in\mathcal{F}_{a, w},$ it can be easily verified that $\psi_w\circ h\in \mathcal{F}_{a, 0},$ hence 
$$\frac{|h'(a)|}{1-|w|^2}=|(\psi_w\circ h)'(a)|\le k^q(a, a).$$
Since $h\in\mathcal{F}_{a, w}$ was arbitrary, we can deduce that 
$$\sup\{|f'(a)|\text{ }|\text{ }f\in\mathcal{F}_{a, w}\}\le (1-|w|^2)k^q(a, a) $$
$$=(1-|w|^2)\bigg(\sum_{n=0}^{\infty}\frac{|a|^{2n}}{1+q^{2n+1}}+\sum_{n=-1}^{-\infty}\frac{|a|^{2n}}{1+q^{2n+1}}    \bigg) $$
$$\le (1-|w|^2)\bigg(\sum_{n=0}^{\infty}|a|^{2n}+\frac{1}{q}\sum_{n=1}^{\infty}\frac{q^{2n}}{|a|^{2n}} \bigg) $$
$$=(1-|w|^2)\bigg(\frac{1}{1-|a|^2}+\frac{q}{|a|^2-q^2} \bigg).$$
\end{proof}
We also require the following computational lemmata.
\begin{lemma}\label{Membership}
Let $a, u\in\mathbb{C}$ and assume $T=\begin{pmatrix} a & u \\ 0 & a \end{pmatrix}\in \mathscr{DLA}_{q}(c)$ with $q<|a|<1$. Then,

$$\bigg|\frac{e^{i\theta}}{(1-ae^{i\theta})^2} -\frac{qe^{i\psi}}{a^2(1-qe^{i\psi}a^{-1})^2}  \bigg||u| \le 2\Re\bigg(\frac{1}{1-ae^{i\theta}}+\frac{1}{1-qe^{i\psi}a^{-1}} \bigg)+c-2,$$
for all $\theta$ and $\psi$.
\end{lemma}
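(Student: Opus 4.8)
The plan is to exploit the $2\times 2$ structure directly: membership in $\mathscr{DLA}_q(c)$ for a matrix with a single eigenvalue is governed by a positivity condition on $2\times 2$ self-adjoint matrices, and positivity of a $2\times 2$ Hermitian matrix $\left(\begin{smallmatrix} \alpha & \beta \\ \bar\beta & \alpha \end{smallmatrix}\right)$ is equivalent to $\alpha \ge 0$ and $\alpha \ge |\beta|$. First I would write $T = \left(\begin{smallmatrix} a & u \\ 0 & a\end{smallmatrix}\right)$ and unpack condition (ii) of Definition~\ref{newdef}: the operator $2\Re\big[(1-zT)^{-1}+(1-wqT^{-1})^{-1}\big]-2+c$ is an upper-triangular-plus-adjoint $2\times 2$ matrix whose diagonal entry is the scalar $2\Re\big[(1-za)^{-1}+(1-wqa^{-1})^{-1}\big]-2+c$ and whose off-diagonal entry involves $u$ together with the derivatives of $z\mapsto (1-za)^{-1}$ and $w\mapsto (1-wqa^{-1})^{-1}$. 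Because $\sigma(T) = \{a\}$ sits strictly inside $\mathscr{A}_q$ (we are given $q<|a|<1$), by the $\mathscr{DLA}$-analogue of Lemma~\ref{DLAfromztotheta} it suffices to test at $z=e^{i\theta}$, $w=e^{i\psi}$ on the boundary of the disk.

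Next I would carry out the explicit differentiation. For the outer part, $\frac{d}{dz}(1-za)^{-1} = a(1-za)^{-2}$, so evaluated at $z = e^{i\theta}$ the relevant off-diagonal contribution from $(1-zT)^{-1}$ is (a constant multiple of) $u\,\dfrac{e^{i\theta}}{(1-ae^{i\theta})^2}$ after accounting for the chain rule through $T$. For the inner part, writing $T^{-1} = \left(\begin{smallmatrix} a^{-1} & -u a^{-2} \\ 0 & a^{-1}\end{smallmatrix}\right)$, the off-diagonal entry of $(1-wqT^{-1})^{-1}$ at $w = e^{i\psi}$ produces $-u\,\dfrac{q e^{i\psi}}{a^2(1-qe^{i\psi}a^{-1})^2}$. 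Taking real parts and assembling, the $2\times 2$ positivity of the matrix in Definition~\ref{newdef}(ii) becomes exactly: the diagonal scalar
$$2\Re\Big(\tfrac{1}{1-ae^{i\theta}}+\tfrac{1}{1-qe^{i\psi}a^{-1}}\Big)+c-2 \ge 0$$
together with the off-diagonal domination
$$\Big|\tfrac{e^{i\theta}}{(1-ae^{i\theta})^2}-\tfrac{qe^{i\psi}}{a^2(1-qe^{i\psi}a^{-1})^2}\Big|\,|u| \le 2\Re\Big(\tfrac{1}{1-ae^{i\theta}}+\tfrac{1}{1-qe^{i\psi}a^{-1}}\Big)+c-2,$$
which is precisely the claimed inequality. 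The first (diagonal) condition is implied by the second whenever $u\ne 0$, and is harmless when $u=0$.

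The main obstacle I anticipate is purely bookkeeping: tracking the normalization constants correctly through the functional calculus for the $2\times 2$ Jordan block, and making sure the signs line up so that the outer and inner off-diagonal terms appear with a minus sign between them (this is where $T^{-1}$ contributing $-ua^{-2}$ in its $(1,2)$ entry matters, versus the $+u$ in $T$). One clean way to avoid sign errors is to note that for the upper-triangular block $T$, $f(T) = \left(\begin{smallmatrix} f(a) & u f'(a) \\ 0 & f(a)\end{smallmatrix}\right)$ for any function $f$ analytic near $a$; apply this with $f(z) = (1-e^{i\theta}z)^{-1}$ and separately with $f(z) = (1-e^{i\psi}q z)^{-1}$ evaluated at $T^{-1}$ (whose $(1,2)$ entry is $-ua^{-2}$, equivalently $u\cdot\frac{d}{dz}(z^{-1})\big|_{z=a}$, so that $g(T^{-1})$ has $(1,2)$ entry $u\cdot(g\circ \mathrm{inv})'(a)$ with $\mathrm{inv}(z)=z^{-1}$). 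Then the criterion "Hermitian $2\times 2$ matrix with equal diagonal entries $\delta$ and off-diagonal entry $\eta$ is positive semidefinite iff $\delta\ge 0$ and $\delta \ge |\eta|$" finishes it immediately. Invoking \cite[Theorem 3.1]{OkuboSpitkovsky2x2} as in Lemma~\ref{90}, or simply the direct $2\times 2$ computation, provides an alternative route and a useful consistency check.
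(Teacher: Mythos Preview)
Your proposal is correct and follows essentially the same route as the paper: reduce to boundary parameters $z=e^{i\theta}$, $w=e^{i\psi}$ via the $\mathscr{DLA}_q$-analogue of Lemma~\ref{DLAfromztotheta}, then read off the inequality from the $2\times 2$ positivity of the resulting Hermitian matrix (the paper phrases this last step simply as ``taking determinants,'' which for a Hermitian matrix $\left(\begin{smallmatrix}\alpha & \beta\\ \bar\beta & \alpha\end{smallmatrix}\right)$ with $\alpha\ge 0$ amounts exactly to your criterion $\alpha\ge|\beta|$). Your explicit functional-calculus computation of the off-diagonal entry is a welcome expansion of what the paper leaves implicit.
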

\begin{proof}
 The proof of Lemma \ref{DLAfromztotheta} carries over to the $\mathscr{DLA}_q(c)$ setting. Thus, the fact that $T\in \mathscr{DLA}_{q}(c)$ and $\sigma(T)=\{a\}\subset \mathscr{A}_q$ allows us to deduce 
  $$2\Re\big[(1-e^{i\theta}T)^{-1}+(1-e^{i\psi}qT^{-1})^{-1}\big]-2+c\ge 0, \hspace{0.4 cm} \forall \theta, \psi\in [0, 2\pi).$$
  Taking determinants then leads to the desired inequality.
\end{proof}

\begin{lemma} \label{matrixnorm}
 Let $C>0$ and $w\in\mathbb{D}.$ Then, 
 $$\bigg|\bigg|\begin{pmatrix}
 w & C(1-|w|^2) \\
 0 & w
 \end{pmatrix}
 \bigg|\bigg|\le \max\{1, C\}.$$
\end{lemma}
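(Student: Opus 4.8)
The plan is to compute the norm of the $2\times 2$ matrix directly by reducing to an eigenvalue computation for a positive semidefinite matrix. Write $M = \begin{pmatrix} w & C(1-|w|^2) \\ 0 & w \end{pmatrix}$. Since $\|M\|^2 = \|M^*M\| = \lambda_{\max}(M^*M)$, I would form $M^*M$ explicitly; it is the positive matrix $\begin{pmatrix} |w|^2 & \overline{w}C(1-|w|^2) \\ wC(1-|w|^2) & C^2(1-|w|^2)^2 + |w|^2 \end{pmatrix}$. Its largest eigenvalue is $\tfrac{1}{2}\big(\operatorname{tr} + \sqrt{(\operatorname{tr})^2 - 4\det}\big)$ where $\operatorname{tr} = 2|w|^2 + C^2(1-|w|^2)^2$ and $\det = |w|^2 \cdot C^2(1-|w|^2)^2$ (the cross terms in the determinant cancel since $|\overline w C(1-|w|^2)|^2 = |w|^2 C^2 (1-|w|^2)^2$, so $\det M^*M = (\det M)(\det M^*) = |w|^4$, wait — let me recompute: $\det M = w^2$, so $\det(M^*M) = |w|^4$; indeed $|w|^2\big(C^2(1-|w|^2)^2+|w|^2\big) - |w|^2 C^2(1-|w|^2)^2 = |w|^4$).

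With $t := |w|^2 \in [0,1)$ and $D := C^2(1-t)^2 \ge 0$, the largest eigenvalue is $\mu_+ = \tfrac12\big(2t + D + \sqrt{(2t+D)^2 - 4t^2}\big) = \tfrac12\big(2t + D + \sqrt{D^2 + 4tD}\big)$. So I need to show $\mu_+ \le \max\{1, C^2\}$. I would split into the two cases. If $C \le 1$, then $D = C^2(1-t)^2 \le (1-t)^2$, and I claim $\mu_+ \le 1$; equivalently $\sqrt{D^2+4tD} \le 2 - 2t - D$ (the right side is $\ge (1-t)^2 + (2t - 2t^2) \ge 0$ wait, need $2-2t-D \ge 0$, which holds since $D \le (1-t)^2 \le 2(1-t)$ for $t\in[0,1)$). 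Squaring reduces the claim to $D^2 + 4tD \le 4 - 8t + 4t^2 - 4D + 4tD + D^2$, i.e. $0 \le 4(1-t)^2 - 4D = 4(1-t)^2(1 - C^2) \ge 0$. So the $C\le 1$ case is immediate. If $C \ge 1$, I want $\mu_+ \le C^2$, i.e. $\sqrt{D^2+4tD} \le 2C^2 - 2t - D$; the right side equals $2C^2 - 2t - C^2(1-t)^2$, which I would check is nonnegative, then square to reduce to $4tD \le 4C^4 - 8C^2 t - 4C^2 D + 4t^2 + 4tD + D^2 - D^2 + \cdots$ — more cleanly, the inequality $\mu_+ \le C^2$ is equivalent to $C^2$ being an upper bound for both eigenvalues, which (since the matrix is $2\times 2$ positive) is equivalent to $C^2 I - M^*M \ge 0$, i.e. $(C^2 - t)(C^2 - t - D) - t D \ge 0$. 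Expanding: $C^4 - C^2 t - C^2 D - C^2 t + t^2 + tD - tD = C^4 - 2C^2 t - C^2 D + t^2 = (C^2 - t)^2 - C^2 D = (C^2-t)^2 - C^4(1-t)^2$. For $C \ge 1$ and $t\in[0,1)$ one has $C^2 - t \ge C^2(1-t) \ge 0$ (since $C^2 - t - C^2 + C^2 t = t(C^2-1) \ge 0$), hence $(C^2-t)^2 \ge C^4(1-t)^2$, giving the claim. The same determinant-positivity identity $(C^2-t)^2 - C^4(1-t)^2 = (1-t)\big((C^2-t)(C^2+\cdots)\big)$ handles both cases uniformly, actually: $(1 - t)\big(C^2(C^2 t - t + 1) - \cdots\big)$ — but I'll just present the two-case split since it's transparent.

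The main (and essentially only) obstacle is bookkeeping: making sure the right-hand sides are nonnegative before squaring, and not dropping the constraint $t = |w|^2 < 1$. There is no conceptual difficulty — everything is a $2\times 2$ positive-matrix computation. I would write the proof as: compute $M^*M$, observe $\|M\|^2 = \lambda_{\max}(M^*M)$, reduce to checking $\lambda^2 I - M^*M \ge 0$ for $\lambda = \max\{1,C\}$, which by the $2\times2$ positivity criterion amounts to $\operatorname{tr}(M^*M) \le 2\lambda^2$ together with $\det(\lambda^2 I - M^*M) \ge 0$; the trace condition is $2|w|^2 + C^2(1-|w|^2)^2 \le 2\max\{1,C^2\}$ (clear in each case) and the determinant condition is the identity $(\lambda^2 - |w|^2)^2 \ge C^4(1-|w|^2)^2$ worked out above.
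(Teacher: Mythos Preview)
Your argument is correct and follows essentially the same route as the paper: both reduce the bound $\|M\|\le\max\{1,C\}$ to checking that $\lambda^2 I - M^*M$ (the paper uses $MM^*$, which has the same eigenvalues) is positive semidefinite for $\lambda=\max\{1,C\}$, via the $2\times 2$ criterion (nonnegative diagonal entry plus nonnegative determinant), with a case split on $C\le 1$ versus $C\ge 1$. The only cosmetic difference is that for $C\le 1$ the paper invokes the well-known characterization $\bigl\|\begin{smallmatrix} e & f \\ 0 & e\end{smallmatrix}\bigr\|\le 1 \iff |f|\le 1-|e|^2$ directly, whereas you redo that computation by hand.
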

\begin{proof}
Given any matrix of the form $P=\begin{pmatrix}
 e & f \\
 0 & e
 \end{pmatrix}$, it is well-known that $||P||\le 1$ if and only if $|f|\le 1-|e|^2.$ Thus, if $C\le 1,$ we immediately obtain that 
$$\bigg|\bigg|\begin{pmatrix}
 w & C(1-|w|^2) \\
 0 & w
 \end{pmatrix}
 \bigg|\bigg|\le 1.$$
Now, assume $C>1$. Note that
$$C^2-\begin{pmatrix}
 w & C(1-|w|^2) \\
 0 & w
 \end{pmatrix}\begin{pmatrix}
 w & C(1-|w|^2) \\
 0 & w
 \end{pmatrix}^*$$ $$=\begin{pmatrix}
 -|w|^2+C^2(2-|w|^2)|w|^2 & -C\overline{w}(1-|w|^2) \\
 -Cw(1-|w|^2) & C^2-|w|^2
 \end{pmatrix}\ge 0,$$
 since the $(1, 1)$-entry of this last matrix is clearly positive, while its determinant is equal to 
 $$|w|^2(C^2-|w|^2)(2C^2-C^2|w|^2-1)-C^2|w|^2(1-|w|^2)^2 $$
 $$=|w|^2(C^2-1)(2C^2-|w|^2(1+C^2))>0.$$
 This concludes the proof.
\end{proof}
We are now prepared for the main result of this subsection.
\begin{proof}[Proof of Theorem \ref{2x2}]
First, we will prove the theorem in the setting of $\mathscr{A}_q.$ \par 
  Let $T\in \mathscr{DLA}_{q}(c)$ be a $2\times 2$ matrix with a single eigenvalue. Since unitary equivalence respects $K$-spectral estimates (see e.g. \cite[Example 4, p. 107-5]{badeaspectral}), we may assume that $T$ is of the form $\begin{pmatrix} a & u \\ 0 & a \end{pmatrix}$. We may also take $a>0$ (as $\mathscr{A}_q$ is invariant under rotations).  Finally, it suffices to work with $q<a<1$ (the general case follows by a standard approximation argument). \par 
Now, let $f: \overline{\mathscr{A}_q}\to\mathbb{D}$ be analytic. We may write
$$||f(T)||=\bigg|\bigg| \begin{pmatrix} f(a) & f'(a)u \\ 0 & f(a) \end{pmatrix}\bigg|\bigg|$$ 
$$=\bigg|\bigg| \begin{pmatrix} f(a) & |f'(a)u| \\ 0 & f(a) \end{pmatrix}\bigg|\bigg| $$
$$=\bigg|\bigg| \begin{pmatrix} f(a) & \frac{|f'(a)u|}{1-|f(a)|^2}(1-|f(a)|^2) \\ 0 & f(a) \end{pmatrix}\bigg|\bigg|.$$
If $\frac{|f'(a)u|}{1-|f(a)|^2}\le 1,$ Lemma \ref{matrixnorm} gives us $||f(T)||\le 1,$ which is stronger than the desired estimate. On the other hand, if  $\frac{|f'(a)u|}{1-|f(a)|^2}> 1,$  Lemmata \ref{basicestimate} and \ref{matrixnorm} imply that 
$$||f(T)||=\bigg|\bigg| \begin{pmatrix} f(a) & \frac{|f'(a)u|}{1-|f(a)|^2}(1-|f(a)|^2) \\ 0 & f(a) \end{pmatrix}\bigg|\bigg|$$
$$\le \frac{|f'(a)|}{1-|f(a)|^2}|u|$$ $$\le \bigg(\frac{1}{1-a^2}+\frac{q}{a^2-q^2}\bigg)|u|.$$
Assume that $a\ge \sqrt{q}$. In Lemma \ref{Membership}, take $\theta=0, \psi=\pi$. The resulting bound on $|u|$ then allows us to write:

$$||f(T)|| $$ $$\le  \bigg(\frac{1}{1-a^2}+\frac{q}{a^2-q^2}\bigg)\bigg(\frac{1}{(1-a)^2}+\frac{q}{a^2(1+q/a)^2} \bigg)^{-1}\bigg(\frac{2}{1-a}+\frac{2}{1+q/a}+c-2 \bigg)$$ $$\le 2+c\frac{1-q}{1+q},$$
where the last inequality can be seen (after some computations) to be equivalent to $a\ge \sqrt{q}$. This concludes the proof in this case. If $a<\sqrt{q}$, one can choose $\theta=\pi$ and $\psi=0$ in the above calculation and argue in an analogous manner.  
Thus, we have shown that $\overline{\mathscr{A}_q}$ is a $K_q$-spectral for $T\in\mathscr{DLA}_q(c)$
 whenever $T$ is a $2\times 2$ matrix with a single eigenvalue, where $K_q=2+c\frac{1-q}{1+q}.$ \par 
 We now convert this estimate to the $A_R$-setting. Assume $T\in\mathbb{DLA}_R(c)$ is a $2\times 2$ matrix with a single eigenvalue and set $q=R^{-2}.$ Then, $\tilde{T}:=T/R\in \mathscr{DLA}_q(c)$ and is also, evidently, still a $2\times 2$ matrix with a single eigenvalue. In view of our previous result, $\overline{\mathscr{A}_q}$ will be a $K$-spectral set for $\tilde{T}$, where 
 $$K=2+c\frac{1-q}{1+q}=2+c\frac{R^2-1}{R^2+1}.$$
 This is easily seen to imply (see e.g. \cite[Fact 2, p. 107-3]{badeaspectral}) that $\overline{A_R}$ is a $K$-spectral set for $T$, which concludes our proof. 
\end{proof}

 \par\textit{Funding}. Jury was partially supported by National Science Foundation Grant DMS 2154494. Tsikalas was partially supported by National Science Foundation Grant DMS 2054199 and by Onassis Foundation - Scholarship ID: F ZR 061-1/2022-2023.  \\ 

 \par\textit{Acknowledgements}. The research was conducted while the second author was visiting the Department of Mathematics at the University of Florida in Gainesville. He gratefully acknowledges the hospitality of his gracious hosts, Michael Jury and Scott McCullough.

\printbibliography

\end{document}